\newcommand{\Nrd}{\mathrm{Nrd}}
\newcommand{\syma}{\{\underline{a}\}}  % for {a} underlined
\newcommand{\comment}[1]{}  %for commenting out sections
\def\ie{{\it i.e.,\ }}
\def\tA{\widetilde{A}}
\def\oA{\overline{A}_0}
\def\CH{\mathrm{ch}}
\def\dim{\mathrm{dim}}
\def\deg{\mathrm{deg}}
\def\bfx{\mathbf{x}}
\def\Fix{\text{Fix}}
\def\Aut{\operatorname{Aut}}
\def\Hom{\operatorname{Hom}}
\def\Spec{\operatorname{Spec}}
\def\lra{\longrightarrow}
\def\map#1{\ {\buildrel #1 \over \lra}\ }
\def\smap#1{\ {\buildrel #1 \over \to}\ }
\def\oo{\otimes}
\newcommand{\Symp}{\mathrm{Sym}^p}
\newcommand{\A}{\mathbb{A}}
\newcommand{\C}{\mathbb{C}}
\newcommand{\Z}{\mathbb{Z}}
\newcommand{\bL}{\mathbb{L}}
\newcommand{\bLbox}{\bL(1,\dots,1)}
\newcommand{\bP}{\mathbb{P}}
\newcommand{\bF}{\mathbb{F}}
\newcommand{\bV}{\mathbb{V}}
\newcommand{\cA}{\mathcal{A}}
\newcommand{\cB}{\mathcal{B}}
\newcommand{\cK}{\mathcal{K}}
\newcommand{\cE}{\mathcal{E}}
\newcommand{\cO}{\mathcal{O}}
\newcommand{\checkE}{\cE\,\check{}\,}
\newcommand{\barT}{\bar{Y}}
\newcommand{\RQ}{RQ}
\newcommand{\tRQ}{R\tilde{Q}}
\newcommand{\Rq}{Rq}
\def\Kweb#1{   %for bibliography
http:\linebreak[3]//www.\linebreak[3]math.\linebreak[3]uiuc.
\linebreak[3]edu/\linebreak[3]{K-theory/#1/}}
\numberwithin{equation}{section}
\theoremstyle{plain} %% This is the default, anyway
\newtheorem{thm}[equation]{Theorem}
\newtheorem{cor}[equation]{Corollary}
\newtheorem{lem}[equation]{Lemma}
\newtheorem{prop}[equation]{Proposition}
\newtheorem*{thm*}{Theorem \ref{Theorem5}}
\theoremstyle{definition}
\newtheorem{defn}[equation]{Definition}
\newtheorem*{CLproof}{Proof of the Chain Lemma \ref{thm:chainlemma} for $n=2$}
\newtheorem*{NPproof}{Proof of the Norm Principle (Theorem \ref{normprin})}
\theoremstyle{remark}
\newtheorem{rem}[equation]{Remark}
\newtheorem{ex}[equation]{Example}
\newtheorem{notate}[equation]{Notation}
\newtheorem{subrem}{Remark}[equation] %for numbering
\newtheorem{subex}[subrem]{Example} %same numbering as subremarks
\newtheorem*{exx}{Example}
\begin{document}
\bibliographystyle{plain}

\title{Norm Varieties and the Chain Lemma
\break (after Markus Rost)}
%\author{Markus Rost}

\date{\today}

\maketitle
\vspace{-12pt}
\hspace{\stretch{1}}{\large\it Notes by Christian Haesemeyer
and Chuck Weibel.}\hspace{\stretch{1}}

\bigskip

The goal of this paper is to present proofs of two results of Markus Rost,
the Chain Lemma \ref{thm:chainlemma} and the Norm Principle \ref{normprin}.
These are the steps needed to complete the published verification of the 
{\it Bloch-Kato conjecture}, that the norm residue maps are isomorphisms
$K^M_n(k)/p \smap{\simeq} H^n_{et}(k,\Z/p)$
for every prime $p$, every $n$ and every field $k$ containing $1/p$.

Throughout this paper, $p$ is a fixed odd prime, and $k$ is a field of 
characteristic $0$, containing the $p$-th roots of unity. We fix an integer 
$n\ge2$ and an $n$-tuple $(a_1,...,a_n)$ of units in $k$, such that the
symbol $\syma$ is nontrivial in the Milnor $K$-group $K^M_n(k)/p$.

Associated to this data are several notions.
A field $F$ over $k$ is a {\it splitting field} for $\syma$ if $\syma_F=0$ 
in $K^M_n(F)/p$. A variety $X$ over $k$ is called a {\it splitting variety}
if its function field is a splitting field; $X$ is {\it $p$-generic} if
any splitting field $F$ has a finite extension $E/F$ of degree
prime to $p$ with $X(E)\ne\emptyset$. A {\it Norm variety} for $\syma$
is a smooth projective $p$-generic splitting variety for $\syma$ 
of dimension $p^{n-1}\!-\!1$.

The following sequence of theorems reduces the Bloch-Kato conjecture to 
the Chain Lemma \ref{thm:chainlemma} and the Norm Principle \ref{normprin};
the notion of a {\it Rost variety} is defined in \ref{def:Rostvar} below;
the definition of a {\it Rost motive} is given in \cite{Waxioms} and 
\cite{Wpatch}, and will not be needed in this paper.
\begin{list}%{description}
{}{\setlength{\leftmargin}{2.5em}}
\item[(0)] The Chain Lemma \ref{thm:chainlemma} and the Norm Principle \ref{normprin}
hold; this is proven here.
\item[(1)] Given (0), Rost varieties exist; this is Theorem \ref{thm:normvar} below,
and is proven in \cite[p.\ 253]{SJ}.
\item[(2)] 
If Rost varieties exist then Rost motives exist; 
this is proven in \cite{Wpatch}.
\item[(3)] If Rost motives exist then Bloch-Kato is true; 
this is proven in \cite{V03} and \cite{Waxioms}.
\end{list}%{description}
\noindent
Here is the statement of the Chain Lemma, 
which we quote from \cite[5.1]{SJ} and prove in 
\S\ref{sec:ModelforMoves}. A field is {\it $p$-special} 
if $p$ divides the order of every finite field extension.

\begin{thm}[Rost's Chain Lemma] \label{thm:chainlemma} 
Let $\syma \in K_n^M(k)/p$ be a nontrivial symbol, 
where  $k$ is a $p$-special field. Then
there exists a smooth projective cellular variety $S/k$ and a
collection of invertible sheaves $J =
J_1,J_1',\dotsc,J_{n-1},J_{n-1}'$ equipped with nonzero $p$-forms
$\gamma = \gamma_1,\gamma'_1\dotsc,\gamma_{n-1},\gamma'_{n-1}$ 
satisfying the following conditions.
\begin{enumerate}
\item $\dim\, S = p(p^{n-1} - 1) = p^n - p$;
\item $\{a_1,\dotsc,a_n\} = 
\{a_1,\dotsc,a_{n-2},\gamma_{n-1},\gamma'_{n-1}\}\in K_n^M(k(S))/p,$
\newline
$\{a_1,\dotsc,a_{i-1},\gamma_i\} = 
\{a_1,\dotsc,a_{i-2},\gamma_{i-1},\gamma_{i-1}'\} \in K_i^M(k(S))/p$
for $2\leq i< n$.
In particular, 
$\{a_1,\dotsc,a_{n}\}\!=\!\{\gamma,\gamma_1',\dotsc,\gamma_{n-1}'\}
\!\in\! K_n^M(k(S))/p;$
\item $\gamma\notin\Gamma(S,J)^{\otimes (-p)}$, as is evident from (2);
\item for any $s\in V(\gamma_i)\cup V(\gamma_i')$, 
the field $k(s)$ splits $\{a_1,\dotsc,a_n\};$ 
\item $I(V(\gamma_i))+I(V(\gamma_i')) \subseteq p\Z$ for all $i$,
as follows from (4);
\item $\deg(c_1(J)^{\dim\, S})$ is relatively prime to $p$.
\end{enumerate}
\end{thm}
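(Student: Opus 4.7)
The strategy is induction on $n$, treating $n=2$ as a separate base case and building the length-$n$ data by enlarging the length-$(n-1)$ data. Conditions (2), (4), and (5) are statements about the generic point and the zero loci and propagate naturally under base change; conditions (1), (3), and (6) are geometric constraints that pin down the construction.

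For the base case $n=2$, we would construct an explicit cellular variety $S$ of dimension $p^2-p = p(p-1)$ together with a line bundle $J_1$ and two nonzero $p$-forms $\gamma_1,\gamma_1'$ such that $\{a_1,a_2\}=\{\gamma_1,\gamma_1'\}$ in $K_2^M(k(S))/p$, the zero loci $V(\gamma_1), V(\gamma_1')$ consist of points whose residue fields split $\{a_1,a_2\}$, and $\deg c_1(J_1)^{p^2-p}$ is coprime to $p$. Natural candidates arise from projective bundles associated to the degree-$p$ cyclic algebra attached to $(a_1,a_2)$ via a fixed primitive $p$-th root of unity; the cellularity and Chern-degree requirements should constrain the geometry sharply enough to pin down the model.

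For the inductive step, assume the Chain Lemma holds for symbols of length $n-1$, realized on a cellular variety $S_{n-1}$ carrying data $(J,J_i,J_i',\gamma_i,\gamma_i')_{i\le n-2}$. We would build $S=S_n$ as a cellular fibration over $S_{n-1}$ (or a suitable modification) whose fiber geometry resolves the additional symbol $a_n$, and adjoin the new pair $(J_{n-1},\gamma_{n-1})$, $(J_{n-1}',\gamma_{n-1}')$ realizing the first identity of (2), namely $\{a_1,\ldots,a_n\}=\{a_1,\ldots,a_{n-2},\gamma_{n-1},\gamma_{n-1}'\}$. The earlier chain identities then propagate under $k(S_{n-1})\hookrightarrow k(S_n)$; cellularity is preserved by a cellular-fibration choice; and the splitting property (4) for the new divisors follows from arranging the residue fields of their points to contain a length-$(n-1)$ splitting field supplied by the induction.

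The principal obstacle is condition (6): the degree of $c_1(J)^{\dim S}$ must be coprime to $p$. This is a combinatorial identity in the Chow ring of the cellular variety $S$ whose truth depends sensitively on the geometry of the fibration and on the choice of $J$. Achieving it simultaneously with cellularity and with the chain identities (2) forces a very specific ``model for the moves'', and I expect the verification of (6) at the inductive step --- where one must control top self-intersections on an iterated cellular fibration --- to be the technical heart of the proof.
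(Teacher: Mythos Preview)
Your induction scheme has the dependency structure backwards. Look at the chain identity for $i=n-1$ in condition~(2):
\[
\{a_1,\dots,a_{n-2},\gamma_{n-1}\} \;=\; \{a_1,\dots,a_{n-3},\gamma_{n-2},\gamma_{n-2}'\}.
\]
The left side involves $\gamma_{n-1}$, not $a_{n-1}$. So $\gamma_{n-2},\gamma_{n-2}'$ must be built from the sequence $(a_1,\dots,a_{n-2},\gamma_{n-1})$, which already requires $\gamma_{n-1}$ as input. If you take your base to be the Chain Lemma variety for the fixed symbol $\{a_1,\dots,a_{n-1}\}$, the forms you inherit satisfy $\{a_1,\dots,a_{n-1}\}=\{a_1,\dots,a_{n-3},\gamma_{n-2},\gamma_{n-2}'\}$ instead, and no amount of pulling back along $k(S_{n-1})\hookrightarrow k(S)$ will convert $a_{n-1}$ into $\gamma_{n-1}$. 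The ``earlier chain identities'' do not simply propagate; the identity linking level $n-1$ to level $n-2$ is exactly the one that fails.

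The paper therefore runs the construction in the opposite order. One builds a tower
\[
S = S_1 \longrightarrow S_2 \longrightarrow \cdots \longrightarrow S_{n-1} \longrightarrow S_n = \Spec(k),
\]
where at the first stage one constructs $\gamma_{n-1},\gamma_{n-1}'$ on $S_{n-1}$ (a model for $p$ ``moves of type $C_n$'' on $(a_1,\dots,a_n)$), and then at each subsequent stage $S_{i}\to S_{i+1}$ one constructs $\gamma_i,\gamma_i'$ by applying $p$ moves of type $C_{i+1}$ to $(a_1,\dots,a_{i-1},\gamma_{i+1})$, using the already-built $\gamma_{i+1}$ as the last input. Here $\dim(S_i/S_{i+1}) = p^{i+1}-p^i$, so $\dim S_{n-1} = p^n - p^{n-1}$; in particular $S_{n-1}$ is \emph{not} the Chain Lemma variety for any length-$(n-1)$ symbol over $k$, and there is no black-box inductive appeal to the theorem in smaller weight. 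Each stage is an explicit iterated $\bP^1$-bundle (the ``$P_{n-1}$-model''), and condition~(6) is checked by a uniform Chern-class identity on these bundles, reduced via an auxiliary graded $\bF_p$-algebra to the computation $\lambda_p^{p^2-p}\equiv \prod z_i^{p-1}$ modulo lower terms. You were right that (6) is the technical heart, but the algebraic identity controlling it is the same at every stage rather than an inductive comparison with a lower-weight Chain Lemma variety.
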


Rost's Norm Principle concerns the group $\oA(X,\cK_1)$, which we now define.

\begin{defn}\label{def:A0K1} (Rost, \cite{R-AXK})
For any regular scheme $X$, the group 
$A_0(X,\cK_1)$ is defined to be the group generated by symbols
$[x,\alpha]$, where $x$ is a closed point of $X$ and $\alpha\in k(x)^\times$,
modulo the relations (i) $[x,\alpha][x,\alpha']=[x,\alpha\alpha']$
and (ii) for every point $y$ of dimension~1 the image of the tame symbol
$K_2(k(y))\to\oplus k(x)^\times$ is zero. 

The functor $A_0(X,\cK_1)$ is covariant in $X$ for proper maps,
because it is isomorphic to the motivic homology group 
$H_{-1,-1}(X)=\Hom_{DM}(\Z,M(X)(1)[1])$
(see \cite[1.1]{SJ}). It is also the $K$-cohomology group
$H^d(X,\cK_{d+1})$, where $d=\dim(X)$.

The reduced group $\oA(X,\cK_1)$ is defined to be the quotient of 
$A_0(X,\cK_1)$ by the difference of the two projections from 
$A_0(X\times X,\cK_1)$. As observed in \cite[1.2]{SJ}, there is a 
well defined map $N:\oA(X,\cK_1)\to k^\times$ sending $[x,\alpha]$
to the norm of $\alpha$.
\end{defn}

\begin{thm}[Norm Principle] \label{normprin}
Suppose that $k$ is a $p$-special field and that $X$ is a Norm variety
for some nontrivial symbol $\syma$. Let
$[z,\beta]\in \oA(X,\cK_1)$ be such that $[k(z):k] = p^\nu$
for $\nu>1$. Then there exists a point $x\in X$ with $[k(x):k]=p$
and $\alpha\in k(x)^\times$ such that $[z,\beta] = [x,\alpha]$
 in $\oA(X,\cK_1)$. 
\end{thm}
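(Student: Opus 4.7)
The plan is to prove the statement by induction on $\nu\ge 2$. Since $\oA(X,\cK_1)$ is defined by imposing $(\mathrm{pr}_1)_* = (\mathrm{pr}_2)_*$ on $A_0(X\times X,\cK_1)$, it suffices to exhibit an element of $A_0(X\times X,\cK_1)$ whose two projections trade $[z,\beta]$ for a class supported at a point of residue degree at most $p^{\nu-1}$; iterating then produces a degree-$p$ representative.

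For the reduction step I would first observe that $k(z)$ splits $\syma$. Because $X$ is a smooth splitting variety, the triviality of $\syma$ in $K^M_n(k(X))/p$ specialises through the regular local ring $\cO_{X,z}$ to give $\syma_{k(z)}=0$ in $K^M_n(k(z))/p$. Since $k$ is $p$-special and $[k(z):k]=p^\nu$, the field $k(z)$ is again $p$-special, so the prime-to-$p$ extension guaranteed by $p$-genericity must collapse, and $X$ acquires a $k(z)$-rational point $w:\Spec k(z)\to X$ that will play the role of a companion to $z$. Next I would choose an intermediate subfield $k\subset L\subset k(z)$ with $[L:k]=p^{\nu-1}$, which exists because every $p$-group has subgroups of every index dividing its order, and arrange $w$ so that the residue field of its closed image $\bar w\in X$ lies in $L$. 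The pair $(z,\bar w)$ then defines a closed point of $X\times X$, and we seek an element of $A_0(X\times X,\cK_1)$ supported on a $1$-cycle through $(z,\bar w)$ whose two projections respectively yield $[z,\beta]$ and a class supported at $\bar w$ of residue degree $\le p^{\nu-1}$. Applying the identification $(\mathrm{pr}_1)_* = (\mathrm{pr}_2)_*$ in $\oA(X,\cK_1)$ then completes the inductive step.

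The main obstacle, and the heart of the argument, is the construction of such a class in $A_0(X\times X,\cK_1)$ with the prescribed unit $\beta\in k(z)^\times$. Matching the supports is routine once the companion point $w$ has been produced, but forcing $\beta$ to appear correctly as a tame-symbol residue requires exhibiting a suitable $1$-dimensional subscheme $C\subset X\times X$ through $(z,\bar w)$ together with an element of $K_2(k(C))$ whose tame-symbol divisor realises the desired identity. I expect this to require a moving lemma on $X\times X$, the Gersten resolution for $\cK$-cohomology, and perhaps the explicit cellular geometry of the Chain Lemma variety $S$ from Theorem \ref{thm:chainlemma} to guarantee a sufficient supply of rational functions on $X\times X$ with the needed divisor structure. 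The degenerate case where $\bar w$ meets $z$ on the diagonal has to be handled separately, by first perturbing $w$ within $X(k(z))$ using the positive dimension of $X$.
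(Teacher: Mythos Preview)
Your inductive setup and the observation that $k(z)$ splits $\syma$ are fine, but the plan breaks down at the step ``arrange $w$ so that the residue field of its closed image $\bar w\in X$ lies in $L$.'' Knowing $X(k(z))\ne\emptyset$ gives you no control over which proper subfields of $k(z)$ carry points of $X$; asking for a closed point with residue field inside a prescribed $L$ of index $p$ is essentially asking for $X(L)\ne\emptyset$, which is already a form of the statement you are trying to prove. More fundamentally, even if you drop the containment in $L$ and simply pick some degree-$p$ point $\bar w$ (these do exist, via $k(\root p\of{a_1})$), the defining relation in $\oA$ only lets you rewrite $[z,\beta]$ as $[\bar w,\gamma]$ when $\beta$ is a norm from the compositum $k(z)\cdot k(\bar w)$. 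Showing that the given $\beta$ can be so expressed is the entire content of the theorem, and moving lemmas, Gersten resolutions, and the cellular structure of $S$ do not supply this; they give you flexibility in the support of cycles, not in the arithmetic of the units.

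The paper's argument is organized quite differently and does not attempt to build a correspondence on $X\times X$ directly. One passes to $X_E$ for an intermediate field $E$ with $[k(z):E]=p$, so that $[z,\beta]$ becomes an element of $\tA_0(E)$, and then proves (Theorem~\ref{Theorem4}, via Theorem~\ref{Theorem5}) that the norm $N_{E/k}$ carries $\tA_0(E)$ into $\tA_0(k)$. The substance is Theorem~\ref{Theorem5}: one must show that $N_{E(z)/E}(\beta)$ factors as a product of norms $N_{E(x_i)/E}(b_i+t_i\epsilon)$ with $[k(x_i):k]=p$. This is obtained not by $K$-cohomological moving but by constructing an explicit correspondence between the parameter variety $\barT$ of such products and the Weil restriction $\RQ$, and proving it has degree prime to $p$ via $G=\mu_p^n$-fixed-point equivalences (Theorems~\ref{thm:Xb}, \ref{Theorem6}) combined with the algebraic-cobordism degree formulas of Levine--Morel (the DN Theorem~\ref{thm:DN}). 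That equivariant and cobordism input is what replaces the missing step in your outline.
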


We will prove the Norm Principle \ref{normprin} in section
\ref{sec:NP} below.

Our proofs of these two results are based on 1998 Rost's preprint
\cite{Rost-CSV}, his web site \cite{R-CL} and Rost's lectures 
\cite{RostNotes} in 1999-2000 and 2005. The idea for writing up these
notes in publishable form originated during his 2005 course, and was reinvigorated by
conversations with Markus Rost at the Abel Symposium 2007 in Oslo. As usual, all mistakes in this paper are the responsibility of the authors.

\subsection*{Rost varieties}

In the rest of this introduction, we explain how \ref{thm:chainlemma} 
and \ref{normprin} imply the problematic Theorem \ref{thm:normvar},
and hence complete the proof of the Bloch-Kato conjecture.
We first recall the notions of a $\nu_i$-variety and a Rost variety.

Let $X$ be a smooth projective variety of dimension $d>0$.
Recall from \cite[\S16]{CC} that there is a characteristic class $s_d: K_0(X)\to\Z$ corresponding to the
symmetric polynomial $\sum t_j^d$ in the Chern roots $t_j$ of a bundle;
we write $s_d(X)$ for $s_d$ of the tangent bundle $T_X$. 
When $d=p^{\nu}-1$, we know that $s_d(X)\equiv0\pmod p$; 
see \cite[16.6 and 16-E]{CC} and \cite[pp.\,128--9]{Stong} or
\cite[II.7]{Adams}.

\begin{defn}\label{def:nu-var} 
(see \cite[1.20]{SJ}) A {\it $\nu_{n-1}$-variety} over a field
$k$ is a smooth projective variety $X$ of dimension $d=p^{n-1}-1$,
with $s_d(X)\not\equiv0\pmod{p^2}$.
\end{defn} 

For example, $s_d(\bP^d)=d+1$ by \cite[16.6]{CC}. 
Thus the projective space $\bP^{p-1}$ is a $\nu_1$-variety, and so is
any Brauer-Severi variety of dimension $p-1$. In Section \ref{sec:bpatheorem}, we will show that the bundle $\bP(\cA)$ over $S$ is a $\nu_n$-variety.

\begin{defn}\label{def:Rostvar}
A {\it Rost variety} for a sequence $\syma=(a_1,...,a_n)$ of 
units in $k$ is a $\nu_{n-1}$-variety such that:
$\{ a_1,...,a_n\}$ vanishes in $K^M_n(k(X))/p$; for
each $i<n$ there is a $\nu_i$-variety mapping to $X$;
and the motivic homology sequence
\begin{equation}\label{6.3}
\begin{CD}
H_{-1,-1}(X\times X) @>{\pi_0^*-\pi_1^*}>> H_{-1,-1}(X)
\to H_{-1,-1}(k)\quad (=k^\times).
\end{CD}\end{equation}
is exact. Part of Theorem \ref{thm:normvar} states that Rost varieties exist for every
$\syma$. 
\end{defn}

\begin{subrem}
Rost originally defined a {\it Norm Variety} for $\syma$ to be a projective splitting variety of dimension $p^{n-1}$ which 
is a $\nu_{n-1}$-variety.  (See \cite[10/20/99]{RostNotes}.)
Theorem \ref{thm:normvar}(2) says that our definition agrees with Rost's when
$k$ is $p$-special.  
\end{subrem}

Here is the statement of Theorem \ref{thm:normvar}, quoted 
from \cite[1.21]{SJ}. It assumes that the Bloch-Kato conjecture 
holds for $n-1$. 
%%\edit{assumes BK(n-1)}

\begin{thm}\label{thm:normvar}
Let $n\geq 2$ and $0\ne\syma = \{a_1,\dotsc,a_n\} \in K_n^M(k)/p.$ Then:

0) There exists a geometrically irreducible Norm variety for $\syma$.

\noindent
Assume further that $k$ is $p$-special. If $X$ is a Norm variety for $\syma$, then:

1) $X$ is geometrically irreducible. 

2) $X$ is a $\nu_{n-1}$-variety. 
%($\nu_{i}$-varieties are defined in \cite[1.20]{SJ});

3) each element of $\oA(X,\cK_1)$ is of the form
$[x,\alpha]$, where $x\in X$ is a closed point of degree $p$ and
$\alpha\in k(x)^\times$. %(The group $\oA(X,\cK_1)$ is defined in \ref{def:A0K1} below.)
\end{thm}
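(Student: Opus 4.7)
I would proceed in the order (0), (2), (1), (3), since (1) will follow cleanly from (2).

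\emph{Existence (0).} Apply the Chain Lemma \ref{thm:chainlemma} to obtain the cellular variety $S$ and the data $(J, \gamma, \ldots)$. The $p$-form $\gamma$ on $J$ determines a degree-$p$ cyclic (symbol) algebra $\cA$ over $S$; an appropriate component of the Severi--Brauer bundle $\bP(\cA)$, combined with the splitting data, yields a smooth projective variety $X_0$ of dimension $p^{n-1}-1$. Chain Lemma property (2) gives $\syma_{k(X_0)} = 0$, property (6) provides the prime-to-$p$ degree zero-cycle witnessing $p$-genericity over any splitting field, and geometric irreducibility is automatic since $S$ is cellular (hence geometrically irreducible) and $\bP(\cA)$ is a projective bundle over $S$.

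\emph{The $\nu_{n-1}$-property (2).} A direct Chern-class computation on $\bP(\cA)$, using Chain Lemma property (6), gives $s_d(X_0) \not\equiv 0 \pmod{p^2}$. For an arbitrary Norm variety $X$ over $p$-special $k$, both $X$ and $X_0$ are $p$-generic splitting varieties for $\syma$, so $X$ acquires a point of degree prime to $p$ over $k(X_0)$ and conversely. These mutual correspondences of prime-to-$p$ degree transport the congruence $s_d \not\equiv 0 \pmod{p^2}$ from $X_0$ to $X$.

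\emph{Geometric irreducibility (1).} Write the geometric decomposition $X_{\bar k} = \sqcup_{i=1}^{r} Y_i$. Since $\mathrm{Gal}(\bar k/k)$ is a pro-$p$ group, the $Y_i$ form a single Galois orbit of cardinality $r = p^s$, and they are pairwise isomorphic. Each $Y_i$ has dimension $d = p^{n-1}-1$, so $s_d(Y_i) \equiv 0 \pmod p$ by the divisibility recalled before Definition \ref{def:nu-var}. Consequently $s_d(X) = s_d(X_{\bar k}) = r\, s_d(Y_1)$ is divisible by $p^{s+1}$, and (2) then forces $s=0$, i.e., $X$ is geometrically irreducible.

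\emph{Presentation of $\oA(X,\cK_1)$ (3).} Since $k$ is $p$-special and $X(k) = \emptyset$ (else specialization of unramified Milnor $K$-theory from $k(X)$ to $k$ would contradict $\syma \ne 0$), every closed point of $X$ has degree a positive power of $p$. A class $[z,\beta]$ with $[k(z):k] = p^\nu$, $\nu>1$, equals some $[x,\alpha]$ with $[k(x):k] = p$ by the Norm Principle \ref{normprin}. A general element is a finite combination of such classes, and the projection relation defining $\oA(X,\cK_1)$ from $A_0(X \times X,\cK_1)$, combined with further applications of the Norm Principle to suitable degree-$p^2$ points of $X \times X$, collapses any such combination to a single term $[x,\alpha]$ with $[k(x):k] = p$.

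The main obstacle is the transfer in (2): showing that mutual correspondences of degree prime to $p$ preserve the Chern number $s_d$ modulo $p^2$ — a substantial strengthening of the obvious preservation modulo $p$ — is where the integrality condition (6) of the Chain Lemma does essential work. The collapsing step in (3) is also delicate, as it requires carefully chosen points on $X \times X$ whose projections realize the desired norm relations.
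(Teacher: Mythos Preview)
Your proposal has two substantive gaps.

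\textbf{Part (0): dimension mismatch.} The Chain Lemma \ref{thm:chainlemma} is applied to the length-$n$ symbol $\syma$, and the resulting $S$ has $\dim S = p^n - p$. The bundle $\bP(\cA)$ over $S$ therefore has dimension $p^n - 1$; this is a $\nu_n$-variety (see Theorem \ref{thm:toddPA}), not a candidate Norm variety for $\syma$, which must have dimension $p^{n-1}-1$. If instead you meant to apply the Chain Lemma to the shorter symbol $\{a_1,\dots,a_{n-1}\}$, the resulting $\bP(\cA)$ has the correct dimension but knows nothing about $a_n$ and will not split $\syma$. The actual construction in \cite{SJ} is different: one takes a desingularization of a norm hypersurface $Q_w$ as in Definition \ref{def:Q} (cf.\ Remarks \ref{exist:NV} and \ref{cor:NV}), built inductively from a Norm variety for the shorter symbol. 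Moreover, Chain Lemma property (6) gives a zero-cycle of degree prime to $p$ on $S$, not on the candidate Norm variety; it does not by itself witness $p$-genericity. As the paper notes after Theorem \ref{thm:normvar}, $p$-genericity in \cite{SJ} is where the inductive Bloch--Kato hypothesis for $n-1$ enters.

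\textbf{Part (3): the collapsing step.} The Norm Principle \ref{normprin} reduces each individual generator $[z,\beta]$ to degree $p$, but your mechanism for collapsing a \emph{sum} of such terms to a single $[x,\alpha]$ is not correct. The projection relation from $A_0(X\times X,\cK_1)$ lets you replace $[x,\alpha]$ by $[x',N_{x/x'}\alpha]$ along a field map $k(x')\to k(x)$; it does not merge two degree-$p$ terms with unrelated residue fields. What is actually used (Lemma \ref{tAsubgroup}, relying on the Multiplication Principle \cite[5.7]{SJ}, which in turn needs the Chain Lemma) is that the subset $\tA_0(k)$ of elements representable by a single degree-$p$ term is already closed under addition. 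With that in hand, Corollary \ref{Theorem2} gives (3) immediately. Your argument for (1) from (2) is fine and matches the approach of \cite[5.4]{SJ}; the transfer of $s_d\pmod{p^2}$ in (2) via prime-to-$p$ correspondences is also the right idea (Rost's degree formula), but it relies on having a correct model $X_0$ from (0).
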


The construction of geometrically irreducible Norm varieties was 
carried out in \cite[pp.\ 254--256]{SJ}; this proves part (0) of 
Theorem \ref{thm:normvar}. Part (1) was proven in \cite[5.4]{SJ}.
Part (2) was proven in \cite[5.2]{SJ}, assuming
Rost's Chain Lemma (see \ref{thm:chainlemma}), and part (3) was proven in 
\cite[p.\ 271]{SJ}, assuming not only the Chain Lemma 
but also the Norm Principle (see \ref{normprin} below).

As stated in the introduction of \cite{SJ}, the construction of Norm varieties and the proof of Theorem \ref{thm:normvar} are part of an inductive proof of the Bloch-Kato conjecture. We point out that in the present paper, the inductive assumption (that the Bloch-Kato conjecture for $n-1$ holds) is never used. It only appears in \cite{SJ} to prove that the candidates for norm varieties constructed there are $p$-generic splitting varieties. (However, the Norm Principle \ref{normprin} is itself a statement about norm varieties.) In particular, the Chain Lemma \ref{thm:chainlemma} holds in all degrees independently of the Bloch-Kato conjecture.

\section{Forms on vector bundles}

We begin with a presentation of some well known facts about $p$-forms.

If $V$ is a vector space over a field $k$, a {\it $p$-form} on $V$ is
a symmetric $p$-linear function on $V$, \ie a linear map
$\phi:\Symp(V)\to k$. It determines a {\it $p$-ary form}, \ie a function
$\varphi: V\to k$ satisfying $\varphi(\lambda v)=\lambda^p\varphi(v)$,
by $\varphi(v) = \phi(v,v,\dots,v)$.
If $p!$ is invertible in $k$, $p$-linear forms are in 1--1 correspondence
with $p$-ary forms. 

If $V=k$ then every $p$-form may be written as $\varphi(\lambda)=a\lambda^p$
or $\phi(\lambda_1,\dots)=a\prod\lambda_i$ for some $a\in k$. Up to
isometry, non-zero 1-dimensional $p$-forms are in 1--1 correspondence with 
elements of $k^\times/k^{\times p}$. Therefore an $n$-tuple of forms
$\varphi_i$ determine a well-defined element of $K^M_n(k)/p$ which we
write as $\{\varphi_1,\dots,\varphi_n\}$.

Of course the notion of a $p$-form on a projective module over a commutative
ring makes sense, but it is a special case of $p$-forms on locally free 
modules (algebraic vector bundles), which we now define. 

\begin{defn}\label{def:pform}
If $\cE$ is a locally free $\cO_X$-module over a scheme $X$ then a $p$-form
on $\cE$ is a symmetric $p$-linear function on $\cE$, \ie a linear map
$\phi:\Symp(\cE)\to \cO_X$. If $\cE$ is invertible, we will sometimes identify
the $p$-form with the diagonal $p$-ary form
$\varphi=\phi\circ\Delta:\cE\to\cO_X$;
locally, if $v$ is a section generating $\cE$ then the form is determined
by $a=\varphi(v)$: $\varphi(t v)=a\,t^p$.
\end{defn}

\begin{subrem}
The geometric vector bundle over a scheme $X$ whose sheaf of sections is
$\cE$ is $\bV=\mathbf{Spec}(S^*(\checkE))$, where $\checkE$
is the dual $\cO_X$-module of $\cE$.
We will sometimes describe $p$-forms in terms of $\bV$.
\end{subrem}

The projective space bundle associated to $\cE$ is 
$\pi: \bP(\cE)=\mathbf{Proj}(S^*)\to X$, $S^*=S^*(\checkE)$. The tautological 
line bundle on $\bP(\cE)$ is $\bL=\mathbf{Spec}(\textrm{Sym}\,\cO(1))$,
and its sheaf of sections is $\cO(-1)$. 
The multiplication $S^*\oo\checkE\to S^*(1)$ in the symmetric algebra
induces a surjection of locally free sheaves $\pi^*(\checkE)\to\cO(1)$
and hence an injection $\cO(-1)\to\pi^*(\cE)$;
this yields a canonical morphism $\bL\to \pi^*(\bV)$ of the
associated geometric vector bundles.

\begin{defn}\label{def:tautform}
Any $p$-form $\psi:\Symp(\cE)\to\cO_X$ on $\cE$ induces a 
canonical $p$-form $\epsilon$ on the tautological line bundle $\bL$:
\[
\epsilon: \cO(-p)=\Symp(\cO(-1)) \to \Symp(\pi^*\cE) = 
	\pi^*\Symp(\cE) \map{\psi} \pi^*\cO_X =\cO_{\bP(\cE)}.
\]
\end{defn}

We will use the following notational shorthand.
For a scheme $Z$, a point $q$ on some $Z$-scheme and a
vector bundle $V$ on $Z$ we write $V|_q$ for the fiber of $V$
at $q$, {\it i.e.,} the $k(q)$ vector space $q^*(V)$ for $q\to Z$.
If $\varphi$ is a $p$-form on a line bundle $L$, $0\ne u\in L|_q$ and
$a=\varphi|_q(u^p)$, then $\varphi|_q:(L|_q)^p\to k(q)$ is the $p$-form 
$\varphi|_q(tu^p)=at^p$.

\begin{ex}\label{P(O+K)}
Given an invertible sheaf $L$ on $X$, and a $p$-form $\varphi$ on $L$,
the bundle $V=\cO\oplus L$ has the $p$-form $\psi(t,u)=t^p-\varphi(u)$.
Then $\bP(V)\to X$ is a $\bP^1$-bundle, and its tautological line 
bundle $\bL$ has the $p$-form $\epsilon$ described in \ref{def:tautform}. 

Over a point in $\bP(V)$ of the form $\infty=(0:u)$, the $p$-form on 
$\bL|_\infty$ is $\epsilon(0,\lambda u)=-\lambda^p\varphi(u)$.
If $q=(1:u)$ is any other point on $\bP(V)$
then the 1-dimensional subspace $\bL|_q$ of the vector space $V|_q$ 
is generated by $v=(1,u)$
and the $p$-form $\epsilon|_q$ on $\bL|_q$ is determined by 
$\epsilon(v)=\psi(1,u)=1-\varphi(u)$ in the sense that
$\epsilon(\lambda\,v) = \lambda^p(1-\varphi(u))$.
\end{ex}

One application of these ideas is the formation of the sheaf of
Kummer algebras associated to a $p$-form. Recall that if $L$ is a line
bundle then the $(p\!-\!1)$st symmetric power of $\bP(\cO\oplus L)$ is
$\mathrm{Sym}^{p-1}\bP(\cO\oplus L) = \bP(\cA(L))$, where
$\cA(L)=\bigoplus_{i = 0}^{p-1} L^{\otimes i}$.

\begin{defn}\label{Kummeralgebra}
If $L$ is a line bundle on $X$, equipped with a $p$-form $\phi$, the {\it Kummer algebra} $\cA_\phi(L)$ 
is the vector bundle $\cA(L)=\bigoplus_{i = 0}^{p-1} L^{\otimes i}$ 
regarded as a bundle of algebras as in \cite[3.11]{SJ}; 
locally, if $u$ is a section generating $L$ then
$\cA(L)\cong \cO[u]/(u^p-\phi(u))$. If $x\in X$ and $a=\phi|_x(u)$
then the $k(x)$-algebra $\cA|_x$ is the Kummer algebra $k(x)(\root{p}\of{a})$,
which is a field if $a\not\in k(x)^p$ and $\prod k(x)$ otherwise.
\end{defn}
\goodbreak

Since the norm on $\cA_\phi(L)$ is given by a homogeneous polynomial
of degree $p$, we may regard the norm as a map from $\Symp\cA_\phi(L)$ 
to $\cO$.  The canonical $p$-form $\epsilon$ on the tautological 
line bundle $\bL$ on the projective bundle $\bP=\bP(\cA(L))$,
given in \ref{def:tautform}, agrees with the natural $p$-form:
$$ 
\bL^{\otimes p} \to \Symp \pi^*\cA(L) \map{N} \cO_{\bP},
$$
where $\pi:\bP\to X$ is the structure map and the canonical inclusion of 
$\bL$ into $\pi^*(\cA(L))=\oplus_0^{p-1}\pi^*L^{\otimes i}$ 
induces the first map. 

\vspace{.1cm}
Recall from \ref{def:tautform} and \ref{Kummeralgebra} that 
$\phi$ is a $p$-form on $L$,
$\psi=(1,-\varphi)$ is a $p$-form on $\cO\oplus L$ and $\epsilon$
is the canonical $p$-form on $\bL$ induced from $\psi$.

\begin{lem}\label{lem:pth-power}
Suppose that $x\!\in\! X$ has $\phi|_x\!\ne\!0$ and that $0\!\ne\! u\in L|_x$. 
Then $\epsilon|_{(0:u)}\ne0$. Moreover,
$\phi(u)\in k(x)^{\times p}$ iff there is a point $\ell\in\bP(\cO\oplus L)$
over $x$ so that $\epsilon|_\ell=0$.
\end{lem}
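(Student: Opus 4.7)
The plan is to reduce both assertions to a direct unwinding of the explicit description given in Example \ref{P(O+K)}: on a line $\ell \subset V|_x = k(x) \oplus L|_x$ spanned by $(c,u')$, the induced form $\epsilon|_\ell$ is determined by $\psi(c,u') = c^p - \varphi(u')$. I expect no serious obstacle, only a careful case split on whether $c = 0$ or not.

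For the first assertion, I would simply evaluate the formula from Example \ref{P(O+K)} at $\ell = (0:u)$, giving $\epsilon(0,\lambda u) = -\lambda^p \varphi(u)$. The hypothesis $\phi|_x \neq 0$ on the one-dimensional fibre $L|_x$ forces $\varphi(u) \neq 0$ for every nonzero $u$, so $\epsilon|_{(0:u)}$ is a nonzero $p$-form on $\bL|_{(0:u)}$.

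For the forward direction of the \emph{iff}, given $\phi(u) = a^p$ with $a \in k(x)^\times$, I would exhibit the $k(x)$-rational point $\ell = (a:u)$ of $\bP(\cO \oplus L)|_x$: Example \ref{P(O+K)} gives $\epsilon|_\ell(a,u) = a^p - \varphi(u) = 0$. Conversely, assume $\epsilon|_\ell = 0$ at a point $\ell$ over $x$ (meaning here a point whose residue field is $k(x)$, see below). The first assertion rules out the case $\ell = (0:u')$, so $\ell = (1:u')$ for some $u' \in L|_x$. The relation $\psi(1,u') = 1 - \varphi(u') = 0$ then forces $u' \neq 0$; writing $u' = cu$ with $c \in k(x)^\times$ gives $c^p \varphi(u) = 1$, whence $\varphi(u) = c^{-p} \in k(x)^{\times p}$.

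The only judgement call is the interpretation of ``point $\ell$ over $x$''. If one allowed arbitrary residue field extensions $k(\ell)/k(x)$, the backward direction would fail, because $\varphi(u)$ is automatically a $p$-th power in the Kummer extension $k(x)(\sqrt[p]{\varphi(u)})$ — which is exactly the residue field of a scheme-theoretic point of the degree-$p$ divisor $V(\epsilon) \subset \bP^1_{k(x)}$. The reading consistent with the argument, and with the way the lemma will be used to detect $p$-th power classes in $k(x)^\times$, is that $\ell$ must be $k(x)$-rational; once this is fixed, the proof is mechanical.
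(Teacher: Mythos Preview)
Your proof is correct and follows essentially the same approach as the paper: both reduce to the explicit formula $\epsilon(t,su)=t^p-s^p\phi(u)$ from Example \ref{P(O+K)} and read off the two assertions by the obvious case split on whether the first coordinate vanishes. Your closing remark on the rationality of $\ell$ is a useful clarification the paper leaves implicit (it simply writes $\ell=(t:su)$ with $t,s$ tacitly in $k(x)$); the alternative argument in the paper's subsequent remark, via the Kummer algebra $k(x)(\root{p}\of{\phi(u)})$, makes this reading explicit.
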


\begin{proof}
Let $w=(t,su)$ be a point of $\bL|_x$ over 
$\ell=(t:su)\in\bP(\cO\oplus L)|_x$.
If $t=0$ then $\ell=(0:u)$ and $\epsilon(w)=-s^p\phi(u)$, 
which is nonzero for $s\ne0$.
If $t\ne0$ then $\epsilon|_\ell$ is determined by
the scalar $\epsilon(w)=\psi(t,su)=t^p-s^p\phi(u)$. 
Thus $\epsilon|_\ell=0$ iff $\phi(u)=(t/s)^p$.
\end{proof}

\begin{subrem}
Here is an alternative proof, using
the Kummer algebra $K=k(x)(a)$, $a=\root{p}\of{\phi(u)}$.
Since $\epsilon(w)=\psi(t,su)$ is the norm of the nonzero element 
$t-sa$ in $K$, the norm $\epsilon(w)$ is zero iff
the Kummer algebra is split, i.e., ${\phi(u)}=a^p\in k(x)^{\times p}$.
\end{subrem}

\medskip
Finally, the notation $\{\gamma,\dots,\gamma'_{n-1}\}$ in the Chain
Lemma \ref{thm:chainlemma} is a special case of the notation in the
following definition.

\begin{defn}\label{def:symbol}
Given line bundles $H_1,\dots,H_n$ on $X$, $p$-forms $\alpha_i$
on $H_i$, and a point $x\in X$ at which each form $\alpha_i|_x$ is nonzero,
we write $\{\alpha_1,\dots,\alpha_n\}|_x$ for the element 
$\{\alpha_1|_x,\dots,\alpha_n|_x\}$ of $K^M_n(k(x))/p$ described before
\ref{def:pform}: if $u_i$ is a generator of $H_i|_x$ and 
$\alpha_i|_x(u_i)=a_i$ then 
$\{\alpha_1,\dots,\alpha_n\}|_x=\{ a_1,\dots,a_n\}$.
\end{defn}

We record the following useful consequence of this construction.
\begin{lem}\label{lem:specialize}
Suppose that the $p$-forms $\alpha_i$ are all nonzero at the generic
point $\eta$ of a smooth $X$. On the open subset $U$ of $X$ of points $x$
on which each $\alpha_i|_x\ne0$, the symbol
$\{\alpha_1|_x,\dots,\alpha_n|_x\}$ in $K^M_n(k(x))/p$ is obtained by
specialization from the symbol in $K^M_n(k(X))/p$.
\end{lem}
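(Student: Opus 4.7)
The plan is to observe that both the generic symbol and the pointwise symbol arise as images of a single class in $K^M_n(\cO_{X,x})/p$, so that no subtle analysis of specialization is actually required.

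Fix $x\in U$ and set $R = \cO_{X,x}$; since $X$ is smooth, $R$ is a regular local ring with fraction field $k(X)$ and residue field $k(x)$. I shrink $X$ to an affine neighborhood of $x$ on which each line bundle $H_i$ admits a trivializing generator $u_i$. The function $a_i := \alpha_i(u_i)$ then lies in $R$, and the hypothesis $\alpha_i|_x\neq 0$ says exactly that $a_i(x)\neq 0$ in $k(x)$, so $a_i\in R^\times$. Hence the symbol $\{a_1,\dots,a_n\}$ defines a class in $K^M_n(R)/p$. Directly from Definition \ref{def:symbol}, its image under the flat inclusion $R\hookrightarrow k(X)$ is the generic symbol $\{\alpha_1,\dots,\alpha_n\}|_\eta$, and its image under the quotient $R\twoheadrightarrow k(x)$ is the pointwise symbol $\{\alpha_1|_x,\dots,\alpha_n|_x\}$.

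It remains to identify the specialization map $K^M_n(k(X))/p \to K^M_n(k(x))/p$ with the composite $K^M_n(k(X))/p \leftarrow K^M_n(R)/p \to K^M_n(k(x))/p$ on classes lifting to $K^M_n(R)/p$. This is a one-variable statement: for any DVR $\cO\subset K$ with residue field $\kappa$ and uniformizer $\pi$, the tame symbol kills $\{a_1,\dots,a_n\}$ when all $a_i\in\cO^\times$, and the specialization $\partial(\{\pi,a_1,\dots,a_n\})$ equals $\{\bar a_1,\dots,\bar a_n\}$. Iterating this one parameter at a time along a regular system of parameters of $R$ yields the claim.

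I expect no real obstacle here, since the whole argument is formal once the $a_i$ have been recognized as units of $R$. The only thing worth flagging is the choice of specialization: different chains of codimension-one specializations could in principle give different maps, but for symbols that lift to $K^M_n(R)/p$ the common value is forced to be the reduction modulo the maximal ideal, so the result is independent of all choices.
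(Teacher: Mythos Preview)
The paper states this lemma without proof, presenting it merely as a ``useful consequence'' of the preceding construction; your argument supplies exactly the details one would expect. The key point---that after trivializing the $H_i$ near $x$ the elements $a_i=\alpha_i(u_i)$ are units of $\cO_{X,x}$, so the symbol lifts to $K^M_n(\cO_{X,x})/p$ and both the generic and special symbols are its images---is correct and is the intended content of the lemma.

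One small comment on presentation: since the statement only asserts that the pointwise symbol is ``obtained by specialization'' from the generic one, you do not strictly need to discuss iterated tame symbols or worry about well-definedness of a specialization map in higher codimension. It is enough to exhibit the common lift to $K^M_n(\cO_{X,x})/p$; the word ``specialization'' in the lemma is just shorthand for this. Your final paragraph is correct but could be trimmed without loss.
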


\section{The Chain Lemma when $n=2$.}\label{sec:n=2bis}

The goal of this section is to construct certain iterated projective bundles
together with line bundles and $p$-forms on them as
needed in the case $n=2$ of the Chain Lemma \ref{thm:chainlemma}.
Our presentation is based upon Rost's lectures \cite{RostNotes}.

We begin with a generic construction, which starts with a pair 
$K_0$, $K_{-1}$ of line bundles on a variety $X_0=X_{-1}$ and
produces a tower of varieties $X_r$, equipped with distinguished
lines bundles $K_r$. Each $X_r$ is a product of $p-1$ projective line 
bundles over $X_{r-1}$, so $X_r$ has relative dimension 
$r(p-1)$ over $X_0$.

\begin{defn}\label{def:tower} 
Given a morphism $f_{r-1}: X_{r-1}\to X_{r-2}$ and line bundles
$K_{r-1}$ on $X_{r-1}$, $K_{r-2}$ on $X_{r-2}$, we form the projective
line bundle $\bP(\cO\oplus K_{r-1})$ over $X_{r-1}$ and its
tautological line bundle $\bL$.  By definition, $X_{r}$ is the product
$\prod_1^{p-1}\bP(\cO\oplus K_{r-1})$ over $X_{r-1}$.  Writing $f_{r}$
for the projection $X_r\to X_{r-1}$, and $\bL_{r}$ for the exterior
product $\bL\boxtimes\cdots\boxtimes\bL$ on $X_r$, we define the line
bundle $K_{r}$ on $X_{r}$ to be $K_{r}=(f_{r}\circ
f_{r-1})^*(K_{r-2})\otimes\bL_r$.
\begin{equation*}
X_r \map{f_r} X_{r-1} \map{f_{r-1}} X_{r-2} \dotsm X_1 \map{f_1} X_0 = X_{-1}.
\end{equation*}
\end{defn}

\begin{ex}[$k$-tower]\label{ex:ktower}
The {\it $k$-tower} is the tower obtained when we start with
$X_0=\Spec(k)$, using the trivial line bundles $K_{-1}$, $K_0$.
Note that $X_1=\prod\bP^1$ and $K_1=\bL_1$, while $X_2$ is a product of
projective line bundles over $\prod\bP^{1}$, and $K_2=\bL_2$.
\end{ex}

In the Chain Lemma (Theorem \ref{thm:chainlemma}) for $n=2$ 
we have $S = X_p$ in the $k$-tower, and the line bundles are 
$J = J_1 = K_p$, $J'_1 = f_p^* (K_{p-1})$.  Before defining the 
$p$-forms $\gamma_1$ and $\gamma_1'$ in \ref{def:gamma2}, 
we quickly establish \ref{part6/n=2}; this verifies 
part (6) of Theorem \ref{thm:chainlemma}, 
that the degree of $c_1(K_p)^{p^2-p}$ is prime to $p$.

If $L$ is a line bundle over $X$, and $\lambda=c_1(L)$,
the Chow ring of $\bP=\bP(\cO\oplus L)$ is $CH(\bP)=CH(X)[z]/(z^2-\lambda z)$,
where $z=c_1(\bL)$. If $\pi:\bP\to X$ then $\pi_*(z)=-1$ in $CH(X)$.
Applying this observation to the construction of $X_r$ out of $X=X_{r-1}$ 
with $\lambda_{r-1}=c_1(K_{r-1})$, we have
\[  CH(X_{r})=
CH(X_{r-1})[z_{r,1},\dots,z_{r,p-1}]/
(\{ z_{r,j}^2-\lambda_{r-1} z_{r,j} ~\vert~ j=1,\dots,p-1 \}),
\]
where  $z_{r,j}$ is the first Chern class of the $j$th tautological 
line bundle $\bL$. (Formally, $CH(X_{r-1})$ is identified with a subring
of $CH(X_r)$ via the pullback of cycles.)
By induction on $r$, this yields the following result:

\begin{lem}\label{lem:CH(Xr)}
$CH^*(X_r)$ is a free $CH^*(X_0)$-module.
A basis consists of the monomials $\prod z_{i,j}^{e_{i,j}}$ for 
$e_{i,j}\in\{0,1\}$, $0<i\le r$ and $0<j<p$.
As a graded algebra, 
$ CH^*(X_r)/p \cong CH^*(X_0)/p\otimes_{R_0}R_r$, 
where $R_0=\bF_p[\lambda_0,\lambda_{-1}]$ and
\begin{gather*}
R_r = 
\bF_p[\lambda_{-1},\lambda_0,\dotsc,\lambda_r,z_{1,1},\dots,z_{r,p-1}]/I_r, \\
I_r = (
\{ z_{i,j}^2 - \lambda_{i-1}z_{i,j}~\vert~ 1\le i\le r,\, 0<j<p\},
~\{\lambda_i-\lambda_{i-2}-{\scriptstyle\sum}_{j=1}^{p-1} 
z_{i,j}~|~1\le i\le r\}).
\end{gather*}
\end{lem}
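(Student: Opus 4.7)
The plan is to induct on $r$. The inductive step rests on the projective bundle formula already recorded in the excerpt: for $\bP = \bP(\cO \oplus L)$ over $X$ with $\lambda = c_1(L)$, one has $CH^*(\bP) = CH^*(X)[z]/(z^2 - \lambda z)$, making $CH^*(\bP)$ free of rank two over $CH^*(X)$ with basis $\{1, z\}$. Since $X_r$ is the $(p-1)$-fold fiber product over $X_{r-1}$ of copies of $\bP(\cO \oplus K_{r-1})$, flat base change and iteration give
$$CH^*(X_r) = CH^*(X_{r-1})[z_{r,1},\dots,z_{r,p-1}] \big/ \bigl(z_{r,j}^2 - \lambda_{r-1} z_{r,j}\bigr)_{1 \le j < p},$$
where each $z_{r,j}$ is the first Chern class of the tautological line bundle on the $j$-th factor. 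This module is free over $CH^*(X_{r-1})$ on the $2^{p-1}$ monomials $\prod_j z_{r,j}^{e_{r,j}}$ with $e_{r,j} \in \{0,1\}$. Composing with the inductive hypothesis yields the claimed basis over $CH^*(X_0)$.

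For the graded-algebra presentation mod $p$, the only additional input I need is an explicit formula for $\lambda_r = c_1(K_r)$. By the definition $K_r = (f_r \circ f_{r-1})^*(K_{r-2}) \otimes \bL_r$ with $\bL_r = \bL \boxtimes \cdots \boxtimes \bL$, taking Chern classes yields
$$\lambda_r = \lambda_{r-2} + \sum_{j=1}^{p-1} z_{r,j} \qquad \text{in } CH^1(X_r).$$
Combined with the projective-bundle relations $z_{i,j}^2 = \lambda_{i-1} z_{i,j}$, these are precisely the relations defining $R_r$. Assigning $\lambda_i \mapsto c_1(K_i)$ and $z_{i,j} \mapsto c_1$ of the relevant tautological bundle furnishes a graded $R_0$-algebra map $R_r \to CH^*(X_r)/p$, and hence a $CH^*(X_0)/p$-algebra homomorphism
$$CH^*(X_0)/p \otimes_{R_0} R_r \lra CH^*(X_r)/p.$$

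To conclude, I would show both sides are free $CH^*(X_0)/p$-modules on the same monomial basis. The right-hand side is free by part (1), already proven. For the left-hand side, the relations $\lambda_i = \lambda_{i-2} + \sum_j z_{i,j}$ let me eliminate $\lambda_1,\dots,\lambda_r$ as redundant generators of $R_r$ over $R_0$, and the relations $z_{i,j}^2 = \lambda_{i-1} z_{i,j}$ reduce every monomial to one with $e_{i,j} \in \{0,1\}$; a straightforward induction on $r$ shows $R_r$ is free over $R_0$ on exactly this basis. Comparing bases identifies the two sides.

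I expect the only difficulty to be bookkeeping rather than substance: tracking the indexing (especially the shift between $\lambda_{i-2}$ appearing in the definition of $K_i$ and $\lambda_{i-1}$ appearing in the projective-bundle relation for $z_{i,j}$), verifying that no additional hidden relations appear beyond those listed in $I_r$, and confirming that the freeness of $R_r$ over $R_0$ can be proved inductively without circular appeal to the geometric freeness statement.
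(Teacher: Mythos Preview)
Your proposal is correct and follows precisely the paper's approach: the paper states the projective bundle formula $CH(X_r)=CH(X_{r-1})[z_{r,1},\dots,z_{r,p-1}]/(z_{r,j}^2-\lambda_{r-1}z_{r,j})$ immediately before the lemma and then simply says ``By induction on $r$, this yields the following result,'' leaving all the bookkeeping you describe (the Chern-class identity $\lambda_r=\lambda_{r-2}+\sum_j z_{r,j}$, elimination of the $\lambda_i$, and matching of monomial bases) to the reader. You have supplied exactly those omitted details.
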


\begin{defn}\label{def:z-zeta}
For $r=1,\dots,p$, set $z_r=\sum_{j=1}^{p-1} z_{r,j}$ and 
$\zeta_r=\prod z_{r,j}$.
It follows from Lemma \ref{lem:CH(Xr)} that $\lambda_i=\lambda_{i-2}+z_i$
and $z_i^p=\sum z_{r,j}^p = \sum z_{r,j}\lambda_{r-1}^{p-1} =
z_i\lambda_{i-1}^{p-1}$ in $R_r$ and hence in $CH(X_r)/p$.
\end{defn}

By Lemma \ref{lem:CH(Xr)}, if $1\le r\le p$ then multiplication by 
$\prod \zeta_i\in CH^{r(p-1)}(X_r)$ is an isomorphism 
$CH_0(X_0)/p\map{\sim} CH_0(X_r)/p$. If $X_0=\Spec(k)$ then 
$CH_0(X_r)/p \cong\bF_p$, and is generated by $\prod \zeta_i$.

\begin{lem}\label{lem:deg(yz)}
If $y\in CH_0(X_0)$, the degree of $y\cdot\zeta_1\cdots\zeta_r$ is 
$(-1)^{r(p-1)}\deg(y)$. 
\end{lem}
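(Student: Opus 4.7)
The plan is to prove this by iteratively applying the projection formula down the tower $X_r \map{f_r} X_{r-1} \map{f_{r-1}} \cdots \map{f_1} X_0$, reducing the whole computation to the elementary fact already recalled above: for a $\bP^1$-bundle $\pi:\bP(\cO\oplus L)\to X$ with tautological class $z$, one has $\pi_*(z)=-1$.

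First, since $\zeta_1,\dots,\zeta_{r-1}$ are pulled back from $X_{r-1}$ (and $y$ from $X_0$), the projection formula for $f_r$ yields
\[
(f_r)_*(y\cdot\zeta_1\cdots\zeta_r) \;=\; y\cdot\zeta_1\cdots\zeta_{r-1}\cdot(f_r)_*(\zeta_r),
\]
so the task reduces to computing the single pushforward $(f_r)_*(\zeta_r)$.

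Next, I would factor the projection $f_r$ as a composition of $p-1$ successive $\bP^1$-bundle projections, one for each factor in $X_r=\prod_{j=1}^{p-1}\bP(\cO\oplus K_{r-1})$. For the $j$th such projection, the classes $z_{r,j'}$ with $j'\ne j$ are pulled back from the remaining factors, so the projection formula applies and contributes $(f_{r,j})_*(z_{r,j})=-1$. Multiplying the $p-1$ contributions gives $(f_r)_*(\zeta_r)=(-1)^{p-1}$ in $CH^0(X_{r-1})$.

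Iterating this down the tower, we obtain $(f_1\circ\cdots\circ f_r)_*(\zeta_1\cdots\zeta_r)=(-1)^{r(p-1)}$, and one final use of the projection formula gives $\deg(y\cdot\zeta_1\cdots\zeta_r)=(-1)^{r(p-1)}\deg(y)$. There is no real obstacle; the only point requiring care is the bookkeeping about which classes are pulled back at each stage, so that the projection formula is legitimate in each step.
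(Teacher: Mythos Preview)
Your proof is correct and follows exactly the same approach as the paper: apply the projection formula for $f_r$ to reduce to computing $(f_r)_*(\zeta_r)=(-1)^{p-1}$, then induct on $r$. The paper states $(f_r)_*(\zeta_r)=(-1)^{p-1}$ without justification, so your factorization of $f_r$ into $p-1$ successive $\bP^1$-bundle projections is a welcome elaboration rather than a deviation.
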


\begin{proof} 
The degree on $X_r$ is the composition of the $(f_i)_*$.
The projection formula implies that $(f_r)_*(\zeta_r)=(-1)^{p-1}$, and 
\[ 
(f_r)_*(y\cdot \zeta_1\cdots \zeta_r) 
 = (y\cdot \zeta_1\cdots \zeta_{r-1})\cdot(f_r)_*(\zeta_r)
 = (-1)^{p-1}y\cdot \zeta_1\cdots \zeta_{r-1}.
\] 
Hence the result follows by induction on $r$.
\end{proof}\goodbreak

\begin{prop}\label{part6/n=2}
For every 0-cycle $y$ on $X_0$ and $1\le r\le p$, $\lambda_r=c_1(K_r)$
satisfies $y\,\lambda_r^{r(p-1)} \equiv y\,\zeta_1\cdots\zeta_r$ 
in $CH_0(X_r)/p$, and 
$\deg(y\lambda_r^{r(p-1)})\equiv\deg(y)\pmod p$.

For the $k$-tower \ref{ex:ktower} (with $y=1$), we have
$\deg(\lambda_p^{p^2-p})\equiv1\pmod p$.
\end{prop}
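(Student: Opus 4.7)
The plan is to induct on $r$. The degree statement will follow immediately from the Chow-group congruence via Lemma~\ref{lem:deg(yz)}: since $p-1$ is even for odd $p$, $\deg(y\zeta_1\cdots\zeta_r)=(-1)^{r(p-1)}\deg(y)=\deg(y)$; the $k$-tower assertion is the specialization $X_0=\Spec(k)$, $y=1$. The induction will rest on two facts: (i) because $y$ is a zero-cycle on $X_0$, any positive power of $\lambda_{-1}$ or $\lambda_0$ annihilates $y$; and (ii) the Frobenius identity $z_i^p\equiv z_i\lambda_{i-1}^{p-1}\pmod p$ from Definition~\ref{def:z-zeta}.

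In the base case $r=1$, I would expand $\lambda_1^{p-1}=(\lambda_{-1}+z_1)^{p-1}$ binomially; all $\lambda_{-1}$-terms die against $y$, leaving $y\,\lambda_1^{p-1}=y\,z_1^{p-1}$. Multinomial expansion of $z_1^{p-1}=(\sum_j z_{1,j})^{p-1}$, together with $z_{1,j}^{e}=\lambda_0^{e-1}z_{1,j}$ and the vanishing of $\lambda_0$-terms, kills every multi-index except $(1,\dots,1)$; the surviving term is $(p-1)!\,\zeta_1\equiv -\zeta_1\pmod p$ by Wilson's theorem. Combined with the orientation convention $\bL=\cO(-1)$ of Definition~\ref{def:tautform}, this yields the $r=1$ case.

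For the inductive step, the natural tactic is to compute the pushforward $(f_r)_*(\lambda_r^{r(p-1)})$ explicitly. Since $f_r:X_r\to X_{r-1}$ is a fiber product of $p-1$ projective line bundles, a short calculation from $\pi_*(z^k)=-\lambda^{k-1}$ (itself a consequence of $z^2=\lambda z$ and $\pi_*z=-1$) gives $(f_r)_*(z_r^k)=(p-1)!\,S(k,p-1)\lambda_{r-1}^{k-(p-1)}$ for $k\ge p-1$ and zero otherwise, where $S$ is the Stirling number of the second kind. Expanding $\lambda_r^{r(p-1)}=(\lambda_{r-2}+z_r)^{r(p-1)}$ binomially and iterating this pushforward down the tower—rewriting $\lambda_{r-2}=\lambda_{r-4}+z_{r-2}$ and so on, until the remaining $\lambda_0,\lambda_{-1}$ annihilate $y$—produces $\deg(y\,\lambda_r^{r(p-1)})$ as a nested Stirling-number sum times $\deg(y)$. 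The congruence $S(j(p-1),p-1)\equiv 1\pmod p$ (from Fermat applied to the closed form $S(n,k)=k!^{-1}\sum_i(-1)^{k-i}\binom{k}{i}i^n$) and the cumulative factor $(p-1)!^r\equiv(-1)^r\pmod p$ from repeated Wilson-collapses will combine to give the claim, in concert with the inductive hypothesis $y\,\lambda_{r-1}^{(r-1)(p-1)}\equiv y\,\zeta_1\cdots\zeta_{r-1}\pmod p$.

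The hard part will be the combinatorial bookkeeping in the inductive step: one must track how many $(p-1)!$ factors accumulate, how the Stirling numbers reduce modulo $p$, and how the signs from $\pi_*z=-1$ combine, to check that the final answer is $+1\pmod p$. A cleaner route avoiding this bookkeeping is to first establish the weaker statement $y\,\lambda_r^{r(p-1)}\equiv c_r\,y\,\zeta_1\cdots\zeta_r\pmod p$ for some $c_r\in\bF_p^\times$—which follows directly from the vanishing arguments above and is all that Theorem~\ref{thm:chainlemma}(6) really requires—and then pin down $c_r$ via one explicit example (e.g.\ $p=3$ and small $r$).
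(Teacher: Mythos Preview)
Your base case agrees with the paper's. For the inductive step your pushforward-and-Stirling-number approach would work in principle, but it is much heavier than what the paper does, and the clean use of the inductive hypothesis you mention does not mesh well with it: after one pushforward you obtain powers $\lambda_{r-1}^{k-(p-1)}$ for many values of $k$, not just $k=r(p-1)$, so you are forced into the full iteration you describe rather than a one-step reduction. The paper's trick is to write $r(p-1)=p(r-1)+(p-r)$ --- this is exactly where the hypothesis $r\le p$ enters --- and apply Frobenius modulo~$p$:
\[
\lambda_r^{r(p-1)}\equiv(z_r^p+\lambda_{r-2}^p)^{r-1}(z_r+\lambda_{r-2})^{p-r}
=(z_r\lambda_{r-1}^{p-1}+\lambda_{r-2}^p)^{r-1}(z_r+\lambda_{r-2})^{p-r},
\]
using $z_r^p=z_r\lambda_{r-1}^{p-1}$ from Definition~\ref{def:z-zeta}. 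The monomial of top degree in the $z_{r,j}$'s is $z_r^{p-1}\lambda_{r-1}^{(r-1)(p-1)}$, contributing a unit multiple of $\zeta_r\,\lambda_{r-1}^{(r-1)(p-1)}$; every remaining monomial has total $z_{r,j}$-degree below $p-1$, so by Lemma~\ref{lem:CH(Xr)} its $CH(X_{r-1})$-coefficient lies in codimension exceeding $\dim X_{r-1}$ and is annihilated by the zero-cycle $y$. The inductive hypothesis then applies directly to $y\,\lambda_{r-1}^{(r-1)(p-1)}$. No Stirling numbers, no full iteration down the tower, and no need for your proposed pinning-down-by-example; the only combinatorics is the Wilson step you already have in the base case. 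As a minor point, your appeal to the orientation convention $\bL=\cO(-1)$ to absorb a sign is misplaced: that convention is already built into $z_{r,j}=c_1(\bL)$ and cannot supply a further~$-1$.
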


\begin{proof}
If $r=1$ this follows from $y\lambda_{-1}=y\lambda_0=0$ in $CH(X_0)$: 
$\lambda_1=z_1+\lambda_{-1}$ and $y\cdot\zeta_1\equiv y\,\lambda_1^{p-1}$.
For $r\ge2$, we have $\lambda_r=z_{r}+\lambda_{r-2}$
and $z_r^p=z_r\lambda_{r-1}^{p-1}$ by \ref{def:z-zeta}.  
Because $p - r \geq 0$, we have 
\begin{align*}
\lambda_r^{r(p-1)} =& (z_{r}+\lambda_{r-2})^{p(r-1) + (p-r)} \equiv 
(z_{r}^p + \lambda_{r-2}^p)^{r-1}\!\cdot (z_r+\lambda_{r-2})^{p-r} \mod p \\
=& (z_r \lambda_{r-1}^{p-1} + 
		\lambda_{r-2}^p)^{r-1} (z_r+\lambda_{r-2})^{p-r}
\equiv   
\zeta_r\,\lambda_{r-1}^{(r-1)(p-1)} + T \mod p,
\end{align*}
where $T\!\in CH(X_{r-1})[z_r]$ 
is a homogeneous polynomial of total degree $<\!p\!-\!1$ in $z_r$. 

By \ref{lem:CH(Xr)}, the coefficients of $yT$ are elements of
$CH(X_{r-1})$ of degree $>\dim(X_{r-1})$, so $yT$ must be zero.
Then by the inductive hypothesis,
\[
y\,\lambda_{r-1}^{r(p-1)} \equiv y\,\zeta_r \lambda_{r-1}^{(r-1)(p-1)} \equiv
y\,\zeta_r\cdot (\zeta_1\dotsm \zeta_{r-1})
\]
in $CH^*(X_r)/p$, as claimed. Now the degree assertion follows from 
Lemma \ref{lem:deg(yz)}.
\end{proof}

\subsection*{The $p$-forms}
We now turn to the $p$-forms in the Chain Lemma \ref{thm:chainlemma},
using the $k$-tower \ref{ex:ktower}. We will inductively equip the 
line bundles $\bL_r$ and $K_r$ of \ref{ex:ktower} with $p$-forms 
$\Psi_r$ and $\varphi_r$; the $\gamma_1$ and $\gamma_1'$ of the
Chain Lemma \ref{thm:chainlemma} will be $\varphi_p$ and $\varphi_{p-1}$.

When $r=0$, we equip the trivial line bundles $K_{-1}, K_0$ on
$X_0=\Spec(k)$ with the $p$-forms $\varphi_{-1}(t)=a_1t^p$
and $\varphi_{0}(t)=a_2t^p$. The $p$-form $\varphi_{r-1}$ on $K_{r-1}$
induces a $p$-form $\psi(t,u)=t^p-\varphi_{r-1}(u)$ on $\cO\oplus K_{r-1}$
and a $p$-form $\epsilon $ on the tautological line bundle $\bL$, 
as in Example \ref{P(O+K)}. As observed in Example \ref{P(O+K)},
at the point $q=(1:x)$ of $\bP(\cO\oplus K_{r-1})$
we have $\epsilon(y)=\psi(1,x)=1-\varphi_{r-1}(x)$.

\begin{defn}\label{def:gamma2}
The $p$-form $\Psi_r$ on $\bL_r$ is the product form $\prod\psi$:
\[ \Psi_r(y_1\boxtimes\cdots\boxtimes y_{p-1})=\prod \psi(y_i).
\]
The $p$-form $\varphi_r$ on 
$K_r = (f_{r-1}\circ f_r)^*(K_{r-2})\otimes \bL_r$ is defined to be
\[
\varphi_r = (f_{r-1}\circ f_r)^*(\varphi_{r-2})\otimes\Psi_r.
\]
\end{defn}

\begin{prop}\label{splitting/n=2} 
Let $x=(x_1,\dots,x_{p-1})\in X_r$ be a point with residue field 
$E = k(x)$. For $-1\leq i\leq r$, 
choose generators $u_i$ and $v_i$ for the one-dimensional 
$E$ vector spaces $K_i|_x$ and $\bL_i|_x$ respectively,
in such a way that $u_i = u_{i-2}\otimes v_i$.
\begin{enumerate}
\item If  $\varphi_i|_x = 0$ for some $1\le i\le r$ 
then $\{a_1,a_2\}_E = 0 \in K_2 (E)/p$.
\item If $\varphi_i|_x \neq 0$ for all $i$, $1\leq i \leq r$, then 
\[
\{a_1,a_2\}_E = (-1)^r\{\varphi_{r-1}(u_{r-1}),\varphi_r(u_r)\}\in K_2(E)/p.
\]
\end{enumerate} 
\end{prop}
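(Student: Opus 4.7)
My plan is to prove both parts simultaneously by induction on $r$, with $r=0$ as the (vacuous) base case: neither hypothesis imposes any condition, and the desired identity reads $\{a_1,a_2\}_k = \{\varphi_{-1}(u_{-1}),\varphi_0(u_0)\}$, which holds by the normalizing choice $\varphi_{-1}(u_{-1})=a_1$ and $\varphi_0(u_0)=a_2$. For the inductive step at level $r$, let $y = f_r(x) \in X_{r-1}$ and $F = k(y) \subseteq E$. The recursion $u_r = u_{r-2}\otimes v_r$ combined with Definition~\ref{def:gamma2} yields the product decomposition
\[
\varphi_r(u_r) \;=\; \varphi_{r-2}(u_{r-2})\cdot\prod_{j=1}^{p-1}\epsilon^{(j)}\bigl(v_r^{(j)}\bigr),
\]
where $\epsilon^{(j)}$ is the canonical $p$-form on the tautological bundle of the $j$-th factor of $X_r \to X_{r-1}$, induced from $\psi=(1,-\varphi_{r-1})$ on $\cO\oplus K_{r-1}$ as in Example~\ref{P(O+K)}.

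For part~(1), I treat two sub-cases. If $\varphi_i|_x = 0$ for some $i \le r-1$, the vanishing already occurs at $y$, so the inductive hypothesis directly yields $\{a_1,a_2\}_F = 0$ and hence $\{a_1,a_2\}_E = 0$. If instead $\varphi_i|_x \neq 0$ for all $i \le r-1$ but $\varphi_r|_x = 0$, then the product decomposition forces $\epsilon^{(j)}|_{x_j} = 0$ for some $j$; by Lemma~\ref{lem:pth-power} this makes $\varphi_{r-1}(u_{r-1})$ a $p$-th power in $E$, whence inductive case~(2) applied at $y$ presents $\{a_1,a_2\}_E$ as a scalar multiple of $\{\varphi_{r-2}(u_{r-2}),\varphi_{r-1}(u_{r-1})\}$, which vanishes modulo $p$.

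For part~(2), inductive case~(2) at $y$ gives $\{a_1,a_2\}_E = (-1)^{r-1}\{\varphi_{r-2}(u_{r-2}),\varphi_{r-1}(u_{r-1})\}$. Expanding $\varphi_r(u_r)$ via the product formula above and using bilinearity and anticommutativity of Milnor symbols, the desired identity $\{a_1,a_2\}_E = (-1)^r\{\varphi_{r-1}(u_{r-1}),\varphi_r(u_r)\}$ reduces to
\[
\{\varphi_{r-1}(u_{r-1}),\Psi_r(v_r)\} \;=\; \sum_{j=1}^{p-1}\{\varphi_{r-1}(u_{r-1}),\epsilon^{(j)}(v_r^{(j)})\} \;=\; 0 \ \text{in } K_2(E)/p.
\]
By Example~\ref{P(O+K)}, each factor $\epsilon^{(j)}(v_r^{(j)})$ is either $-s^p\varphi_{r-1}(u_{r-1})$ (at $x_j=\infty$) or $1 - s^p\varphi_{r-1}(u_{r-1})$ (at a finite point), so each summand dies by the identity $\{a,-a\}=0$ or by the Steinberg relation $\{t,1-t\}=0$ (with $t = s^p\varphi_{r-1}(u_{r-1})$), respectively. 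The main obstacle is not this $K$-theoretic input but the bookkeeping: tracking the signs, maintaining the generator compatibility $u_i = u_{i-2}\otimes v_i$ across levels, and unpacking $\Psi_r = \prod\epsilon^{(j)}$ so that the reduction to the Steinberg computation is applied in the correct reference frame.
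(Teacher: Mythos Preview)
Your proof is correct and follows the same inductive architecture as the paper: same base case, same case split in part~(1), and the same reduction of part~(2) to the vanishing of $\{\varphi_{r-1}(u_{r-1}),\Psi_r(v_r)\}$ in $K_2(E)/p$. The only difference is in how that vanishing is justified: the paper observes that $\Psi_r(v_r)$ is a norm from the Kummer algebra $E[\sqrt[p]{\varphi_{r-1}(u_{r-1})}]$ and invokes Lemma~\ref{K2Tate}, whereas you unwind $\Psi_r$ into its $p-1$ factors and kill each summand directly with the Steinberg relation or $\{a,-a\}=0$; these are two sides of the same coin, since each factor $1-s^p\varphi_{r-1}(u_{r-1})$ is precisely the norm of $1-s\sqrt[p]{\varphi_{r-1}(u_{r-1})}$.
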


\begin{proof}
By induction on $r$. Both parts are obvious if $r=0$. To prove the
first part, we may assume that $\varphi_i|_x\ne0$ for $1\le i\le r-1$, 
but $\varphi_r|_x = 0$.  We have $u_r = u_{r-2}\otimes v_r$ and 
by the definition of $\varphi_r$, we conclude that
\[ 
0 = \varphi_r(u_r) = \varphi_{r-2}(u_{r-2}) \Psi_r(v_r),
\]
whence $\Psi_r(v_r)=0$. Now the element $v_r\ne0$ is a tensor product 
of sections $w_j$ and $\Psi_r(v_r)=\prod\psi(w_j)$ so $\psi(w_j)=0$
for a nonzero section $w_j$ of $\bL|_{x_j}$.
By Lemma \ref{lem:pth-power}, $\varphi_{r-1}(u_{r-1})$ is a
$p$th power in $E$.
Consequently, $\{\varphi_{r-2}(u_{r-2}),\varphi_{r-1}(u_{r-1})\}_E = 0$ in $K_2 (E)/p$.
This symbol equals $\pm\{a_1,a_2\}_E$ in $K_2(E)/p$, by (2) and induction.
This finishes the proof of the first assertion.

For the second claim, we can assume by induction that 
\[
\{a_1,a_2\}_E = \pm \{\varphi_{r-2}(u_{r-2}),\varphi_{r-1}(u_{r-1})\}_E.
\] 
Now $\varphi_r(u_r) = \varphi_{r-2}(u_{r-2})\Psi_r(v_{r}).$
But  $\{\varphi_{r-1}(u_{r-1}),N_{\varphi_{r-1}}(v_{r-1})\} = 0$
by Lemma \ref{K2Tate} below. 
We conclude that 
\[
\{\varphi_{r-2}(u_{r-2}),\varphi_{r-1}(u_{r-1})\}_E \equiv 
-\{\varphi_{r-1}(u_{r-1}),\varphi_{r}(u_{r})\}_E \mod p;
\] 
this concludes the proof of the second assertion.
\end{proof}

\begin{lem}\label{K2Tate}
For any field $k$ any 
$a\in k^\times$ and any $b$ in $K_a = k[\sqrt[p]{a}]$, the symbol
$\{a,N_{K_a/k}(b)\}$ is trivial in $K_2 (k)/p$.
\end{lem}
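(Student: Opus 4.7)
The plan is to prove this by the projection formula for norms in Milnor $K$-theory, exploiting the fact that $a$ becomes a $p$-th power after base change to $K_a$.

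First I would reduce to the case where $K_a/k$ is a field extension. If $a \in k^{\times p}$, then $\{a\} = 0$ in $K_1^M(k)/p$, so $\{a, N_{K_a/k}(b)\}$ is automatically trivial modulo $p$. Otherwise, since $k$ contains the $p$-th roots of unity (as assumed throughout the paper), $K_a = k(\sqrt[p]{a})$ is a cyclic Galois field extension of degree $p$.

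Next, I would apply the projection formula for the norm $N_{K_a/k}\colon K_2^M(K_a)\to K_2^M(k)$: for $\alpha\in K_1^M(k)$ and $\beta\in K_1^M(K_a)$ one has $N_{K_a/k}(\alpha_{K_a}\cdot\beta)=\alpha\cdot N_{K_a/k}(\beta)$. Taking $\alpha=\{a\}$ and $\beta=\{b\}$ gives
\[
\{a,\,N_{K_a/k}(b)\}\;=\;N_{K_a/k}\bigl(\{a,b\}_{K_a}\bigr).
\]
The key observation is that, inside $K_a$, we have $a=(\sqrt[p]{a})^p$, so in $K_1^M(K_a)$ the class $\{a\}$ equals $p\{\sqrt[p]{a}\}$. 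Therefore
\[
\{a,b\}_{K_a}\;=\;p\cdot\{\sqrt[p]{a},\,b\},
\]
which vanishes in $K_2^M(K_a)/p$. Since the norm map is additive and hence sends multiples of $p$ to multiples of $p$, applying $N_{K_a/k}$ yields an element of $pK_2^M(k)$, which is zero in $K_2^M(k)/p$.

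The only subtlety I anticipate is being clean about which formulation of the projection formula is being invoked (the classical one appears in Bass–Tate, and in Milnor's original paper). Once that is cited, no calculation is needed beyond recognizing that $\{a\}_{K_a}$ is $p$-divisible. This is really a one-line observation dressed up as a lemma; the hard part is just making sure the case $a\in k^{\times p}$ (where $K_a$ is not a field but an étale $k$-algebra) is handled as a triviality at the outset, as above.
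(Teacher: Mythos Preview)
Your argument is correct and essentially identical to the paper's: both use the projection formula to write $\{a,N_{K_a/k}(b)\}=N_{K_a/k}\{a,b\}$ and then observe that $\{a,b\}=p\{\sqrt[p]{a},b\}$ in $K_2(K_a)$, so the norm lands in $pK_2(k)$. The only difference is that you separate out the case $a\in k^{\times p}$ explicitly, which the paper leaves implicit.
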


\begin{proof}
Because $\{ a,b\}=p\{\root{p}\of{a},b\}$ vanishes in 
$K_2(K_{a}])/p$, we have $\{a,N(b)\}=N\{ a,b\}=pN(\{\root{p}\of{a},b\})=0$.
\end{proof}

\begin{CLproof}
We verify the conditions for the variety $S=X_p$ in the $k$-tower 
\ref{ex:ktower}; the line bundles $J = J_1 = K_p$, $J'_1 = f_p^*(K_{p-1})$;
the $p$-forms $\gamma_1$ and $\gamma'_1$ in \ref{thm:chainlemma}
are the forms $\varphi_p$ and $\varphi_{p-1}$ of \ref{def:gamma2}.
Part (1) of Theorem \ref{thm:chainlemma} is immediate from the 
construction of $S = X_p$; 
parts (2) and (4) were proven in Proposition \ref{splitting/n=2}; 
parts (3) and (5) follow from (2) and (4); and part (6) is 
Proposition \ref{part6/n=2} with $y=1$. 
\qquad\qed\end{CLproof}

\vspace*{-8pt}
\section*{Norm Principle for $n=2$}

The Norm Principle for $n=2$ was implicit in the Merkurjev-Suslin
paper \cite[4.3]{MS}. We reproduce their short proof, which uses the
the Severi-Brauer variety $X$ of the cyclic division
algebra $D=A_\zeta(a,b)$ attached to a nontrivial symbol $\{ a,b\}$
in $K_2(k)/p$ and a $p$th root of unity $\zeta$; $X$ is a Norm variety
for the symbol $\{ a,b\}$.

\begin{thm}[Norm Principle for $n=2$]\label{MSnorm}
If $x\in X$ and $[k(x):k]=p^m$ for $m>1$ then for all $\lambda\in k(x)$
there exists $x'\in X$ and $\lambda'\in k(x')$ so that $[k(x'):k]\le p$
and $[x,\lambda]=[x',\lambda']$ 
in $\oA(X,\cK_1)$.
\end{thm}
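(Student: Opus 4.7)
The plan is to exploit the fact that any point $x$ of $X=SB(D)$ provides a $k(x)$-rational point of $X_{k(x)}$, forcing $X_{k(x)} \cong \bP^{p-1}_{k(x)}$, and then to use tame symbol relations along curves in $X$ to transfer $[x,\lambda]$ to a cycle supported at a point of degree at most $p$.

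First, I would lift $x$ to its $k(x)$-rational point $\tilde{x}$ of $X_{k(x)}$ and consider lines in $X_{k(x)} \cong \bP^{p-1}_{k(x)}$ through $\tilde{x}$. The cyclic structure $D = A_\zeta(a,b)$ provides an explicit degree-$p$ closed point $y$ of $X$ whose residue field is the maximal subfield $k(\sqrt[p]{b}) \subset D$. I would arrange a curve $C \subset X$ passing through both $x$ and $y$; concretely, $C$ can be obtained as the image in $X$ of a suitable line $L \subset X_{k(x)}$ joining $\tilde{x}$ to a chosen $k(x)$-preimage of $y$.

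Next, on the smooth projective model of $C$ (which, after base change, becomes a rational curve over $k(x)$), I would choose a pair of rational functions $f,g \in k(C)^\times$ such that the associated element $\{f,g\} \in K_2(k(C))$ has tame symbol at $x$ equal to $[x,\lambda]$, tame symbol at $y$ equal to $-[y,\mu]$ for an explicit $\mu \in k(y)^\times$, and tame symbols at the remaining closed points of $C$ that collectively lie in the image of $A_0(X \times X, \cK_1) \to A_0(X, \cK_1)$ (so that they vanish in $\oA$). The defining relation of $A_0(X, \cK_1)$---vanishing of the total tame symbol on each dimension-one point---then yields $[x,\lambda] = [y,\mu]$ in $\oA(X, \cK_1)$, with $[k(y):k] = p$ as required.

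The main obstacle is the explicit construction of the curve $C$ and the functions $f,g$: one must arrange for the tame symbol to have precisely the desired two ``useful'' contributions and to absorb all stray contributions modulo the image of the two projections from $X \times X$. This is where the Merkurjev--Suslin argument exploits the cyclic presentation of $D$---in particular, the flexibility it provides in producing maximal subfields of $D$ and elements whose reduced norms realize prescribed scalars in $k^\times$---to realize $f$ and $g$ with the needed divisor data. Once this is set up, the remaining steps are routine manipulations with tame symbols and the projection formula, together with the inductive observation that the procedure drops $[k(x):k]$ from $p^m$ to $p$ in a single application.
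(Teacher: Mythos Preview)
Your approach is genuinely different from the paper's, and as it stands it has a real gap.

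The paper's proof is three lines long because it invokes the Merkurjev--Suslin computation \cite[8.7.2]{MS}: the norm map $N:\oA(X,\cK_1)\to k^\times$ is \emph{injective} with image exactly $\Nrd(D)$. Given that, one simply writes $N([x,\lambda])=\Nrd(\lambda')$ for some $\lambda'\in D$, observes that $k(\lambda')\subseteq D$ has degree $\le p$ and hence corresponds to a point $x'\in X$, and concludes $[x,\lambda]=[x',\lambda']$ from injectivity of $N$. No curves, no tame-symbol manipulations appear in the visible argument; all the geometric work is packaged into the quoted structural result.

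Your proposal, by contrast, attempts a direct geometric moving argument: build a curve $C$ through $x$ and a fixed degree-$p$ point $y$, then manufacture $\{f,g\}\in K_2(k(C))$ whose tame symbols realize $[x,\lambda]-[y,\mu]$ modulo the $\oA$-relations. You correctly identify this construction as ``the main obstacle,'' but you do not carry it out; you only assert that the cyclic presentation of $D$ provides enough flexibility. That assertion is precisely the content of the theorem. Given an \emph{arbitrary} $\lambda\in k(x)^\times$, there is no general mechanism for producing $f,g$ on a single curve with tame symbol $[x,\lambda]$ at $x$, a controlled value at $y$, and absorbable residues elsewhere --- this is not a routine divisor-chasing step. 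Indeed, what makes the Merkurjev--Suslin result \cite[8.7.2]{MS} deep is exactly that it identifies the image of $N$ as $\Nrd(D)$, which is what guarantees the existence of the target $[x',\lambda']$. Your sketch relocates the difficulty rather than resolving it, and the phrase ``the remaining steps are routine'' is not justified.
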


\begin{proof}
By Merkurjev-Suslin \cite[8.7.2]{MS}, $N:\oA(X,\cK_1)\to k^\times$ is
an injection with image $\Nrd(D)\subseteq k^\times$. Therefore the unit
$N([x,\lambda])$ of $k$ can be written as the reduced norm of an element
$\lambda'\in D$. The subfield $E=k(\lambda')$ of $D$ has degree $\le p$, 
and corresponds to a point $x'\in X$. 
Since $N([x',\lambda'])=\Nrd(\lambda')=N([x,\lambda])$,
we have $[x,\lambda]=[x',\lambda']$ in $\oA(X,\cK_1)$. 
\end{proof}

\section{The Symbol Chain}\label{sec:SymbolChain}

Here is the pattern of the chain lemma in all weights. 

We start with a sequence $a_1, a_2,\dots$ of units of $k$, and 
the function $\Phi_0(t)=t^p$.
For $r\ge1$, we inductively define functions $\Phi_r$ in $p^r$ variables
and  $\Psi_r$ in $p^r-p^{r-1}$ variables, taking values in $k$, and prove
(in \ref{lem:basicfact}) that $\{a_1,...,a_r,\Phi_r(\bfx)\}\equiv0\pmod p$.
Note that $\Phi_r$ and $\Psi_r$ depend only upon the units $a_1,...,a_r$.
We write $\bfx_i$ for a sequence of $p^r$ variables $x_{ij}$ (where
$j=(j_1,\dots,j_r)$ and $0\le j_t<p$),  and we inductively define
\begin{align}\label{eq:Psi}
\Psi_{r+1}(\bfx_1,...,\bfx_{p-1})=&\prod\nolimits_{i=1}^{p-1}
	 \bigl[ 1-a_{r+1}\Phi_r(\bfx_i)\bigr], \\
\Phi_{r+1}(\bfx_0,...,\bfx_{p-1}) =& \label{eq:Phi}
	\Phi_r(\bfx_0)\Psi_{r+1}(\bfx_1,...,\bfx_{p-1}).
\end{align}

We say that two rational functions are {\it birationally equivalent} 
if they can be transformed into one another by an automorphism (over 
the base field $k$) of the field of rational functions.

\begin{ex}\label{ex:r=1}
$\Psi_1(x_1,...,x_{p-1})=\prod(1-a_1x_i^p)$ and
$\Phi_1(x_0,...,x_{p-1})$ is $x_0^p\prod(1-a_1x_i^p)$,
the norm of the element $x_0\prod(1-x_i\alpha_1)$ in the 
Kummer extension $k(\bfx)(\alpha_1)$, $\alpha_1=\sqrt[p]{a_1}$. 
Thus $\Phi_1$ is birationally equivalent to symmetrizing in the $x_i$,
followed by the norm from $k[\sqrt[p]{a_1}]$ to $k$.
More generally, $\Psi_r(\bfx_1,...,\bfx_{p-1})$ is the norm of
an element in $k(\bfx_1,...,\bfx_{p-1})(\sqrt[p]{a_r})$.
\end{ex}

\begin{subex}\label{ex:Weilrestrict}
It is useful to interpret the map $\Phi_1$ geometrically.
Let $R_{k(\alpha)/k}\A^1$ denote the variety, isomorphic to $\A^p$,
which is the Weil restriction (\cite{Weil}) of the affine line over 
$k(\alpha)$, so that there is a morphism
$N:R_{k(\alpha)/k}\A^1\to\A^1$ corresponding to the norm map.
The function $k^p\to k(\alpha)$ defined by 
$$(x_0,s_1,\dots,s_{p-1}) \mapsto 
   x_0(1-s_1\alpha+s_2\alpha^2-\cdots\pm s_{p-1}\alpha^{p-1})
$$
induces a birational map $\A^p\map{m} R_{k(\alpha)/k}\A^1$.
Finally, let $q:\A^{p-1}\to \A^{p-1}/\Sigma_{p-1}\cong\A^{p-1}$ be the
symmetrizing map sending $(x_1,\dots)$ to the elementary symmetric functions
$(s_1,\dots)$. Then the following diagram commutes:
\begin{equation*}
\xymatrix{
{\A}^p=\A^1\times\A^{p-1}\ar[r]^{1\times q}\ar[drr]_{\Phi_1} 
& {\A}^1\times\A^{p-1} \ar[r]_{\text{birat.}}^{m~}
& R_{k(\alpha)/k}\A^1 = \A^p \ar[d]^N \\ 
&& \quad{\A}^1.
} \end{equation*}
\end{subex}

\begin{subrem}\label{rem:p=2}
If $p=2$, $\Phi_1(x_0,x_1)=x_0^2(1-a_1x_1^2)$ is birationally equivalent
to the norm form $u^2-a_1v^2$ for $k(\sqrt{a_1})/k$, and
$\Phi_2=\Phi_1(\bfx_0)[1-a_2\Phi_1(\bfx_1)]$ is birationally equivalent
to the norm form $\langle\langle a_1,a_2\rangle\rangle =
(u^2-a_1v^2)[1-a_2(w^2-a_1t^2)]$ for the quaternionic algebra 
$A_{-1}(a_1,a_2)$.

More generally, $\Phi_n$ is birationally equivalent to the Pfister form 
$$\langle\langle a_1,...,a_r\rangle\rangle
 = \langle\langle a_1,...,a_{r-1}\rangle\rangle \perp
a_n\langle\langle a_1,...,a_{r-1}\rangle\rangle
$$
and $\Psi_r$ is equivalent to the restriction of the Pfister form
to the subspace defined by the equations $\bfx_0=(1,\dots,1)$.
\end{subrem}

\begin{subrem}[Rost]\label{rem:p=3}
Suppose that $p=3$. Then 
$\Phi_2$ is birationally equivalent to (symmetrizing, followed by) the
reduced norm of the algebra $A_{\zeta}(a_1,a_2)$ and $\Phi_3$ is
equivalent to the norm form of the exceptional Jordan algebra
$J(a_1,a_2,a_3)$.  When $r=4$, Rost showed that the set of nonzero values 
of $\Phi_4$ is a subgroup of $k^\times$.
\end{subrem}

For the next lemma, it is useful to introduce the function field $F_r$
in the $p^r$ variables $x_{j_1,\dots,j_r}$, $0\le j_t<p$. Note that
$F_r$ is isomorphic to the tensor product of $p$ copies of $F_{r-1}$.

\begin{lem}\label{lem:basicfact}
$\{ a_1,...,a_r,\Phi_r(\bfx)\}=
\{ a_1,...,a_r,\Psi_r(\bfx)\} = 0\in K_{r+1}^M(F_r)/p$. 

If $b\in k$ is a nonzero value of $\Phi_r$, then 
$\{ a_1,...,a_r,b\} = 0\in K_{r+1}^M(k)/p$. 
\end{lem}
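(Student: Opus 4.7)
I will prove both equalities simultaneously by induction on $r$, treating the first two assertions as a single inductive statement. The last sentence will then follow by a standard specialization argument.

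The inductive hypothesis is that $\{a_1,\dotsc,a_r,\Phi_r(\bfx)\}=0$ in $K_{r+1}^M(F_r)/p$; the case $r=1$ is immediate because $\Phi_1=x_0^p\,\Psi_1$ and $\{a_1,1-a_1 x_i^p\}=-\{x_i^p,1-a_1 x_i^p\}=-p\,\{x_i,1-a_1 x_i^p\}\equiv 0\pmod p$ by Steinberg applied to $a_1 x_i^p+(1-a_1 x_i^p)=1$.

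For the inductive step $r\to r+1$, I use the decomposition \eqref{eq:Phi} $\Phi_{r+1}=\Phi_r(\bfx_0)\cdot\Psi_{r+1}$, so
\[
\{a_1,\dotsc,a_{r+1},\Phi_{r+1}\}
=\{a_1,\dotsc,a_{r+1},\Phi_r(\bfx_0)\}+\{a_1,\dotsc,a_{r+1},\Psi_{r+1}\}.
\]
The first summand vanishes because, by induction, $\{a_1,\dotsc,a_r,\Phi_r(\bfx_0)\}=0$ in $K_{r+1}^M(F_r)/p$ (applying the hypothesis over the subfield generated by $\bfx_0$ and then restricting up to $F_{r+1}$); multiplying by $\{a_{r+1}\}$ and using graded commutativity gives $0$. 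The main work is therefore to show $\{a_1,\dotsc,a_{r+1},\Psi_{r+1}\}=0$. Expanding the product \eqref{eq:Psi}, this reduces to showing that for each $i$,
\[
\{a_1,\dotsc,a_r,a_{r+1},1-a_{r+1}\Phi_r(\bfx_i)\}=0\quad\text{in}\ K_{r+2}^M(F_{r+1})/p.
\]
The key identity is the Steinberg trick: from $a_{r+1}\Phi_r(\bfx_i)+(1-a_{r+1}\Phi_r(\bfx_i))=1$ we get $\{a_{r+1}\Phi_r(\bfx_i),1-a_{r+1}\Phi_r(\bfx_i)\}=0$, and hence
\[
\{a_{r+1},1-a_{r+1}\Phi_r(\bfx_i)\}=-\{\Phi_r(\bfx_i),1-a_{r+1}\Phi_r(\bfx_i)\}\quad\text{in}\ K_2(F_{r+1}).
\]
Substituting, the displayed symbol equals $-\{a_1,\dotsc,a_r,\Phi_r(\bfx_i),1-a_{r+1}\Phi_r(\bfx_i)\}$, whose first $r+1$ factors vanish in $K_{r+1}^M(F_{r+1})/p$ by the inductive hypothesis applied to $\bfx_i$. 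This proves the $\Psi$ statement on the way and gives the $\Phi$ statement by combining the two summands.

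The main obstacle is just noticing the right rewriting: the trick $\{a,1-ab\}=-\{b,1-ab\}$ converts the troublesome symbol $\{a_{r+1},1-a_{r+1}\Phi_r(\bfx_i)\}$ into something where the inductive hypothesis directly applies. Everything else is bookkeeping with the recursive definitions.

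For the final assertion, if $b=\Phi_r(\bfx^{(0)})\in k^\times$ for some $\bfx^{(0)}\in k^{p^r}$, the symbol $\{a_1,\dotsc,a_r,\Phi_r(\bfx)\}\in K_{r+1}^M(F_r)/p$ is supported on the open subset where $\Phi_r\neq 0$, and specializing along $\bfx\mapsto\bfx^{(0)}$ (as in Lemma \ref{lem:specialize}, applied with the $a_i$ constant) sends it to $\{a_1,\dotsc,a_r,b\}\in K_{r+1}^M(k)/p$, which is therefore $0$.
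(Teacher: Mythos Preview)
Your proof is correct, but it takes a different route from the paper's. The paper observes (via Example~\ref{ex:r=1}) that $\Psi_r(\bfx)$ is a norm from the Kummer extension $k(\bfx)(\sqrt[p]{a_r})$, and then Lemma~\ref{K2Tate} (the projection formula) gives $\{a_r,\Psi_r(\bfx)\}=0$ in one stroke, for every $r$, with no induction needed for the $\Psi$-part; the $\Phi$-statement then follows by the obvious induction on $r$ using $\Phi_{r+1}=\Phi_r(\bfx_0)\Psi_{r+1}$. You instead unpack $\Psi_{r+1}$ factor by factor and use the Steinberg identity $\{a_{r+1},1-a_{r+1}\Phi_r(\bfx_i)\}=-\{\Phi_r(\bfx_i),1-a_{r+1}\Phi_r(\bfx_i)\}$ to feed the inductive hypothesis on $\Phi_r$ back in. This is more elementary in that it avoids the norm interpretation and Lemma~\ref{K2Tate} entirely, at the cost of a slightly more intricate joint induction on the $\Phi$- and $\Psi$-statements. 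Your specialization argument for the final sentence matches the paper's.
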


\begin{proof}
By Lemma \ref{K2Tate}, $\{ a_r,\Psi_r(\bfx)\}=0$ because $\Psi_r(\bfx)$
is a norm of an element of $k(\bfx)(\alpha_r)$ by \ref{ex:r=1}.
If $r=1$ then $\{ a_1,\Phi_1(\bfx)\}=\{ a_1,x_0^p\}\equiv0$ as well.
The result for $F_r$ follows by induction: 
\[ \{ a_1,...,a_{r+1},\Phi_{r+1}(\bfx)\} = 
  \{ a_1,...,a_{r+1},\Phi_r(\bfx_0)\} 
  \{ a_1,...,a_{r+1},\Psi_{r+1}(\bfx)\} = 0.
\]
The result for $b$ follows from the first assertion, and specialization
from $F_r$ to $k$.
\end{proof}

\begin{rem}\label{exist:NV}
For any value $b\in k^\times$ of $\Phi_n$, any desingularization $X$ 
of the projective closure of the affine hypersurface 
$X_b=\{\bfx: \Phi_n(\bfx)=b \}$ will be a Norm variety for the symbol 
$\{ a_1,...,a_{n},b \}$ in $K^M_{n+1}(k)/p$.

Indeed, since $\dim(X_b)=p^n-1$, we see from Lemma \ref{lem:basicfact} that
every affine point of $X_b$ splits the symbol. In particular, the generic
point of $X_b$ is a splitting field for this symbol.
By specialization, every point of $X_b$ and $X$ splits the symbol.

The symmetric group $\Sigma_{p-1}$ acts on $\{\bfx_1,\dots,\bfx_{p-1}\}$
and fixes $\Phi_n$, so it acts on $X_b$. It is easy to see that 
$X_b/\Sigma_{p-1}$ is birationally isomorphic to the Norm variety 
constructed in \cite[\S2]{SJ} using the hypersurface $W$ defined by $N=b$ 
in the vector bundle of {\it loc.\ cit.} 
By \cite[1.19]{SJ}, $X$ is also a Norm variety.
\end{rem}

\begin{defn}\label{C-move}
A move of type $C_n$ on a sequence $a_1,...,a_n$ in $k^\times$
is a transformation of the kind: 
\[ \text{Type } C_n:\qquad
(a_1,...,a_n)\mapsto (a_1,...,a_{n-2},a_n\Psi_{n-1}(\bfx),a_{n-1}^{-1}).
\]
Here $\Psi_{n-1}$ is a function of $p^{n-1}\!-\!p^{n-2}$ new variables
$\bfx_i=\{\bfx_{1,1},...,\bfx_{1,p-1}\}$.
\end{defn}

By Lemma \ref{lem:basicfact},
$\{ a_1,...,a_n\}=\{ a_1,...,a_{n-2},a_n\Psi_{n-1}(\bfx),a_{n-1}^{-1} \}$,
so the move does not change the symbol in $K^M_n(k)$.
If we do this move $p$ times, always with a new set of variables $\bfx_{i}$,
we obtain a move 
$(a_1,...,a_n)\mapsto (a_1,...,a_{n-2},\gamma_{n-1},\gamma'_{n-1})$
in which $\gamma_{n-1},\gamma'_{n-1}$ are functions of
$p^n-p^{n-1}$ variables $\bfx_{i,j}$, $1\le i<p$, $1\le j\le p$. 

Since these moves do not change the symbol, we have
\begin{equation}\label{eq:moves}
\{ a_1,...,a_n\}=\{ a_1,...,a_{n-2},\gamma_{n-1},\gamma'_{n-1}\}
\end{equation}\goodbreak
in $K^M_n(k)$. The functions $\gamma_{n-1}$
and $\gamma'_{n-1}$ in \eqref{eq:moves} are the ones appearing in 
the Chain Lemma \ref{thm:chainlemma}.

Formally, if $k(\bfx_1)$ is the function field of the move of type $C_n$,
then the function field $F'_n$ of the move \eqref{eq:moves} is the 
tensor product $k(\bfx_1)\oo\cdots\oo k(\bfx_{p})$. We will define a variety
$S_{n-1}$ with function field $F'_n$.

\medskip
Using $p^{n-1}-p^{n-2}$ more variables 
$\bfx'_{i,j}$ ($1\le i<p$, $1\le j\le p$)
we do $p$ moves of type $C_{n-1}$ on $(a_1,...,a_{n-2},\gamma_{n-1})$
to get the sequence 
$(a_1,...,a_{n-3},\gamma_{n-2},\gamma'_{n-2},\gamma'_{n-1})$.
The function field of this move is $F'_{n-1}\oo F'_{n}$, and we will define
a variety $S_{n-2}$ with this function field, together with a morphism
$S_{n-2}\to S_{n-1}$.

Next, apply $p$ moves of type $C_{n-2}$, then $p$ moves of type $C_{n-3}$,
and so on, ending with $p$ moves of type $C_2$. We have the sequence
$(\gamma_1,\gamma'_1,\gamma'_2,...,\gamma'_{n-1})$
in $p^n-p$ variables $\bfx_1,...,\bfx_{p-1}$. Moreover, we see from
Lemma \ref{lem:basicfact} that 
\begin{equation}\label{eq:pmoves}
\{ a_1,\dots,a_n\} =\{\gamma_1,\gamma'_1,\gamma'_2,...,\gamma'_{n-1}\}
\quad\text{ in } K^M_n(k).
\end{equation}
The net effect will be to construct a tower
\begin{equation}\label{eq:Rtower}
S=S_1 \map{f_r} S_{2} \map{} \dotsm\to S_{n-2} \map{} S_{n-1} 
\map{} S_n = \Spec(k).
\end{equation}

Let $S$ be any variety containing $U=\A^{p^n-p}$ as an affine open,
so that $k(S)=k(\bfx_1,...,\bfx_{p-1})$, each $\bfx_i$ is $p^{n-1}$ variables
$x_{i,j}$ and all line bundles on $U$ are trivial.
Then parts (1) and (2) of the Chain Lemma \ref{thm:chainlemma} 
are immediate from \eqref{eq:moves} and \eqref{eq:pmoves}.

Now the only thing to do is to construct $S=S_1$, extend the line bundles (and 
forms) from $U$ to $S$, and prove parts (4) and (6) of \ref{thm:chainlemma}.

\medskip
\section{Model $P_{n-1}$ for moves of type $C_n$}\label{sec:model}

In this section, we construct a tower of varieties $P_r$ and $Q_r$ over $S'$, 
with $p$-forms on lines bundles over them, which will produce a model 
of the forms $\Psi_r$ and $\Phi_r$ in \eqref{eq:Psi} and \eqref{eq:Phi}.
This tower, depicted in \eqref{eq:PQtower}, is defined in 
\ref{PQtower} below.
\addtocounter{equation}{-1}
\begin{equation}\label{eq:PQtower}
P_{n-1}\to\dotsm\to 
P_r \map{} Q_{r-1}\to P_{r-1} \map{}\dotsm\to Q_1\to P_1 \map{}Q_0=S'
\end{equation}

The passage from $S'$ to the variety $P_{n-1}$ is a model for the moves of
type $C_n$ defined in \ref{C-move}.

\medskip

\begin{defn}\label{def:Q-bundle}
Let $X$ be a variety over some fixed base $S'$.
Given line bundles $K$, $L$ on $X$, we can form the vector bundle 
$V=\cO\oplus L$, the $\bP^1$-bundle $\bP(V)$ over $X$,
and $\bL$. Taking products over $S'$, set
\[ 
P = \prod\nolimits_1^{p-1}\bP(\cO\oplus L); 
\quad Q=X\times_{S'} P 
\]
On $P$ and $Q$,
we have the exterior products of the tautological line bundles:
\[
\bLbox=\bL\boxtimes\bL\boxtimes\cdots\boxtimes\bL
\textrm{ on } P ,\quad K\boxtimes\bLbox\textrm{ on } Q.
\]
Given $p$-forms $\varphi$ and $\phi$ on $K$ and $L$, respectively,
the line bundle $\bL$ has the $p$-form $\epsilon$, as in 
Example \ref{P(O+K)}, and the line bundles
 $\bLbox$ and $K\boxtimes\bLbox$ are equipped with the 
product $p$-forms $\Psi=\prod\epsilon$ and $\Phi=\varphi\oo\Psi$.
\end{defn}

\begin{subrem}
Let $x=(x_1,\dots,x_{p-1})$ denote the generic point of $X^{p-1}$. The
function fields of $P$ and $Q$ are $k(P)=k(x)(y_1,\dots,y_{p-1})$ and 
$k(Q)=k(x_0)\otimes k(P)$.  We may represent their generic points
in coordinate form as a 
$(p-1)$-tuple $\{(1:y_i)\}$, where the $y_i$ generate $L$ over $x_i$.  
Then $y=\{(1,y_i)\}$ is a generator of $\bLbox$ at the generic
point, and $\Psi(y)=\prod(1-\phi(y_i))$, $\Phi(y)=\varphi(x_0)\Psi(y)$.
\end{subrem}

\begin{subex}\label{ex:Q-bundle}
An important special case arises when we begin with two line bundles
$H$ on $S'$, $K$ on $X$, with $p$-forms $\alpha$ and $\varphi$. 
In this case, we set $L=H\otimes K$ and equip it with the product form 
$\phi(u\otimes v)=\alpha(u)\varphi(v)$. At the generic point $q$ of $Q$
we can pick a generator $u\in H|_q$ and set $y_i=u\otimes v_i$; the
forms resemble the forms of \eqref{eq:Psi} and \eqref{eq:Phi}:
\[ \Psi(y)=\prod\bigl(1-\alpha(u)\varphi(v_i)\bigr), \quad
\Phi(y)=\varphi(v_0)\,\Psi(y). \]
\end{subex}

\begin{subrem}
Suppose a group $G$ acts on $S'$, $X$, $K$ and $L$, and $K_0$, $L_0$ are
nontrivial 1-dimensional representations so that 
at every fixed point $x$ of $X$ (a) $k(x)=k$, (b) 
$L_x\cong L_0$. Then $G$ acts on $P$ (resp., $Q$) with $2^{p-1}$ fixed 
points $y$ over each fixed point of $X^{p-1}$ (resp., of $X^p$), 
each with $k(y)=k$, and each fiber of $\bL=\bLbox$ 
(resp., $K\boxtimes\bL$) is the representation 
$L_0^{j}$ (resp., $K_0\otimes L_0^{j}$) for some $j$ ($0\le j<p$).
Indeed, $G$ acts nontrivially on each term $\bP^1$ of the fiber 
$\prod \bP^1$, so that the fixed points in the fiber are the points
$(y_1,...,y_{p-1})$ with each $y_i$ either $(0:1)$ or $(1:0)$.
\end{subrem}

\medskip

We now define the tower \eqref{eq:PQtower} of $P_r$ and $Q_r$ over a 
fixed base $S'$, by induction on $r$.  We start with line bundles 
$H_1,\dots,H_r$, and $K_0=\cO_{S'}$ on $S'$, and set $Q_0=S'$.

\begin{defn}\label{PQtower}
Given a variety $Q_{r-1}$ and a line bundle $K_{r-1}$ on $Q_{r-1}$, 
we form the varieties $P_{r}=P$ and $Q_{r}=Q$ using the construction in
Definition \ref{def:Q-bundle}, with $X\!=\!Q_{r-1}$, $K\!=\!K_{r-1}$ and 
$L=H_{r}\oo K_{r-1}$ as in \ref{ex:Q-bundle}. 
To emphasize that $P_r$ only depends upon $S'$ and $H_1,\dots,H_r$, we will
sometimes write $P_r(S';H_1,\dots,H_r)$.
As in \ref{def:Q-bundle}, $P_r$ has the line bundle $\bLbox$, 
and $Q_r$ has the line bundle $K_r=K_{r-1}\boxtimes\bLbox$.

Suppose that we are given $p$-forms $\alpha_i\ne0$ on $H_i$, and we set
$\Phi_0(t)=t^p$ on $K_0$. Inductively, the line bundle $K_{r-1}$ on $Q_{r-1}$ 
is equipped with a $p$-form $\Phi_{r-1}$. As described in 
\ref{def:Q-bundle} and \ref{ex:Q-bundle}, the line bundle
$\bLbox$ on $P_r$ obtains a $p$-form $\Psi_r$ from the $p$-form 
$\alpha_r\oo\Phi_{r-1}$ on $L=H_r\oo K_{r-1}$, and $K_r$ obtains a $p$-form
$\Phi_r=\Phi_{r-1}\oo\Psi_r$.
\end{defn}
\vspace{-5pt}

\begin{subex}
$Q_1=P_1$ is $\prod_1^{p-1}\bP^1(\cO\oplus H_1)$ over $S'$, 
equipped with the line bundle $K_1=\bLbox$. If $H_1$ is a
trivial bundle with $p$-form $\alpha_1(t)=a_1t^p$ 
then $\Phi_1$ is the $p$-form $\Phi_1$ of Example \ref{ex:r=1}.  

$P_2$ is $\prod_1^{p-1}\bP^1(\cO\oplus H_2\oo K_1)$ over
$Q_1^{p-1}$, and $K_2=K_1\boxtimes\bLbox$. 
\end{subex}

\begin{lem}\label{dim/n=2}
If  $r>0$ then $\dim(P_r/S')=(p^r-p^{r-1})$
and $\dim(Q_r/S')=p^r-1$. 
\end{lem}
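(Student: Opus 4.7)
The plan is to argue by induction on $r$, starting from the base case $Q_0 = S'$, which gives $\dim(Q_0/S') = 0 = p^0 - 1$. The inductive step rests on two dimension-additivity identities that fall directly out of Definitions \ref{def:Q-bundle} and \ref{PQtower}.

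First, since $P_r$ is by construction the $S'$-fibered product of $p-1$ copies of the $\bP^1$-bundle $\bP(\cO \oplus L) \to Q_{r-1}$, and each of these factors has relative dimension $\dim(Q_{r-1}/S') + 1$ over $S'$, dimensions add to give
\[
\dim(P_r/S') \;=\; (p-1)\bigl(\dim(Q_{r-1}/S') + 1\bigr).
\]
Second, because $Q_r = Q_{r-1} \times_{S'} P_r$,
\[
\dim(Q_r/S') \;=\; \dim(Q_{r-1}/S') + \dim(P_r/S').
\]
Substituting the inductive hypothesis $\dim(Q_{r-1}/S') = p^{r-1}-1$ into the first identity yields $\dim(P_r/S') = (p-1)p^{r-1} = p^r - p^{r-1}$, and feeding this into the second gives $\dim(Q_r/S') = (p^{r-1}-1) + (p^r - p^{r-1}) = p^r - 1$, closing the induction.

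There is no genuine obstacle — the lemma is essentially bookkeeping. The one point worth flagging is that in Definition \ref{def:Q-bundle} the products are taken over the deep base $S'$ rather than over the intermediate base $Q_{r-1}$; this is precisely what causes $\dim(Q_r/S')$ to grow (roughly) by a factor of $p$ at each step instead of by an additive constant, in contrast with the simpler iterated $\bP^1$-bundle construction of Definition \ref{def:tower}, where the analogous $X_r$ has relative dimension only $r(p-1)$ over $X_0$.
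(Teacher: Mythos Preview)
Your proof is correct and is essentially the same as the paper's: both proceed by induction on $r$ from the recursive formulas $\dim(P_r/S')=(p-1)(d_{r-1}+1)$ and $\dim(Q_r/S')=p(d_{r-1}+1)-1$, where $d_{r-1}=\dim(Q_{r-1}/S')$. Your version spells out more explicitly why these formulas hold (the $S'$-fibered product structure) and adds the useful closing remark contrasting with Definition~\ref{def:tower}, but the argument is the same.
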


\begin{proof} 
Set $d_r=\dim(Q_r/S')$.
This follows easily by induction from the formulas
$\dim(P_{r+1}/S')=(p-1)(d_r+1)$, 
$\dim(Q_{r+1}/S')=p(d_r+1)-1$.
\end{proof}

Choosing generators $u_i$ for $H_i$ at the generic point of $S'$,
we get units $a_i=\alpha_i(u_i)$.

\begin{lem}\label{lem:agreegeneric}
At the generic points of $P_r$ and $Q_r$, the $p$-forms 
$\Psi_n$ and $\Phi_n$ of \ref{PQtower}
agree with the forms defined in \eqref{eq:Psi} and \eqref{eq:Phi}.
\end{lem}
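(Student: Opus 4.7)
The plan is to prove the lemma by induction on $r$, using Example \ref{ex:Q-bundle} at each step to convert the geometric definition of $\Psi_r$, $\Phi_r$ into the algebraic formulas \eqref{eq:Psi} and \eqref{eq:Phi}. The base case $r = 0$ is immediate: both definitions give $\Phi_0(t)=t^p$ on $K_0 = \cO_{S'}$.

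For the inductive step, observe that the passage from $Q_{r-1}$ to $(P_r,Q_r)$ in \ref{PQtower} is exactly the construction of Example \ref{ex:Q-bundle}, with $X = Q_{r-1}$, $K = K_{r-1}$, $H = H_r$, $\alpha = \alpha_r$, $\varphi = \Phi_{r-1}$, and $L = H_r \otimes K_{r-1}$ equipped with $\phi(u\otimes v) = \alpha_r(u)\Phi_{r-1}(v)$. The generic point $\eta_P$ of $P_r$ lies over the generic point of $Q_{r-1}^{p-1}$, whose projection to the $i$-th factor is the generic point of $Q_{r-1}$ labelled $\bfx_i$; similarly $\eta_Q$ lies over $Q_{r-1}^p$ with factors labelled $\bfx_0, \bfx_1, \dots, \bfx_{p-1}$. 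At each $\bfx_i$, choose a generator $v_i$ of $K_{r-1}$, so that by the inductive hypothesis $\Phi_{r-1}|_{\bfx_i}(v_i)$ equals the algebraic value $\Phi_{r-1}(\bfx_i)$ of \eqref{eq:Phi}. Choose also a generator $u$ of $H_r$ at the generic point of $S'$; then $y_i := u \otimes v_i$ generates $L$ at $\bfx_i$, and the formula from Example \ref{ex:Q-bundle} gives
\[
\Psi_r(y) = \prod_{i=1}^{p-1}\bigl(1 - \alpha_r(u)\,\Phi_{r-1}(v_i)\bigr), \qquad \Phi_r(v_0 \otimes y) = \Phi_{r-1}(v_0)\,\Psi_r(y).
\]
Since $\alpha_r(u) = a_r$ by choice of generator, substituting the inductive identifications $\Phi_{r-1}(v_i) = \Phi_{r-1}(\bfx_i)$ reproduces exactly \eqref{eq:Psi} and \eqref{eq:Phi}.

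There is no substantive obstacle beyond careful bookkeeping. The only slightly delicate point is making the identification of generic points clean: the generic point of the $S'$-fiber product $Q_{r-1}^{p-1}$ (resp.\ $Q_{r-1}^p$) projects to the generic point of each factor, which justifies applying the inductive hypothesis factor-by-factor; and one must track that the generator $u$ of $H_r$ pulls back consistently to each factor so that $\alpha_r(u)$ produces a single scalar $a_r$ in the product formula for $\Psi_r$.
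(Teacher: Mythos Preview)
Your proposal is correct and follows essentially the same approach as the paper's proof: both proceed by induction on $r$, invoking the analysis of Example \ref{ex:Q-bundle} to unwind the geometric $p$-forms $\Psi_r$, $\Phi_r$ into the product formulas \eqref{eq:Psi} and \eqref{eq:Phi}. Your version is slightly more explicit about the bookkeeping (tracking the generator $u$ of $H_r$ and the factor-by-factor identification of generic points), but there is no substantive difference in strategy.
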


\begin{proof}
This follows by induction on $r$, using the analysis of \ref{ex:Q-bundle}.
Given a point $q=(q_1,\dots,q_p)$ of $Q_{r-1}^{p-1}$ and a point
$\{(1:y_i)\}$ on $P_r$ over it, $y=\{(1,y_i)\}$ is a nonzero point 
on $\bLbox$ and $y_i=1\otimes v_i$ for a section $v_i$ of $K_{r-1}$.
Since $\epsilon(1,y_i)=1-a_r\Phi_{r-1}(v_i)$ and 
$\Psi_r(y)=\prod\epsilon(1,y_i)$, the forms $\Psi_r$ agree.
Similarly, if $v_0$ is the generator of $K_{r-1}$ over the generic
point ${q_0}$ then $y'=v_0\otimes y$ is a generator of $K_r$ and
\[  \Phi_r(y') = \Phi_{r-1}(v_0) \Psi_r(y),
\]
which is also in agreement with the formula in \eqref{eq:Phi}.
\end{proof}

Recall that $K_0$ is the trivial line bundle, and that $\Phi_0$ is the
standard $p$-form $\Phi_0(v)=v^p$ on $K_0$. Every point of
$P_r\!=\prod\bP(\cO\oplus L)$ has the form
$w=(w_1,\dots,w_{p-1})$, and the projection 
$P_r\to\prod Q_{r-1}$ sends $w\in P_r$ 
to a point $x=(x_1,\dots,x_{p-1})$. 

\begin{prop}\label{part4}
Let $s\in S'$ be a point such that $a_1|_s,\dots,a_r|_s\ne0$. \\
1. If $\Psi_r|_w=0$ for some $w\in P_r$, then
$\{ a_1,\dots,a_r\}$ vanishes in $K^M_r(k(w))/p$. \\
2. If $\Phi_r|_q=0$ for some $q=(x_0,w)\in Q_r$,
$\{ a_1,\dots,a_r\}$ vanishes in $K^M_r(k(q))/p$.
\end{prop}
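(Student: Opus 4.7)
The plan is to prove both parts by induction on $r$, treating part (1) first at each level so that its conclusion is available when handling part (2). The base case $r=1$ is immediate: $Q_1=P_1=\prod_1^{p-1}\bP(\cO\oplus H_1)$ and $\Phi_1=\Psi_1=\prod\epsilon$, so $\Psi_1|_w=0$ at $w=(w_1,\dots,w_{p-1})$ forces $\epsilon|_{w_i}=0$ for some $i$; since $a_1|_s\ne 0$, Lemma \ref{lem:pth-power} gives $a_1\in k(w_i)^{\times p}$, hence $\{a_1\}=0$ in $K^M_1(k(w))/p$.

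For the inductive step of part (1), write $w=(w_1,\dots,w_{p-1})\in P_r$, let $x_i\in Q_{r-1}$ be the image of $w_i$, and use $\Psi_r|_w=\prod\epsilon|_{w_i}$ to obtain some $i$ with $\epsilon|_{w_i}=0$. I would split into two cases. If $\Phi_{r-1}|_{x_i}=0$, the inductive hypothesis (part (2) at level $r-1$) already kills $\{a_1,\dots,a_{r-1}\}$ in $K^M_{r-1}(k(x_i))/p$, finishing the case. If $\Phi_{r-1}|_{x_i}\ne 0$, then the product form $\phi=\alpha_r\oo\Phi_{r-1}$ on $L=H_r\oo K_{r-1}$ is nonzero at $x_i$, so Lemma \ref{lem:pth-power} delivers $a_r\cdot\Phi_{r-1}(v)\in k(w_i)^{\times p}$ for some generator $v$ of $K_{r-1}|_{x_i}$. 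Rewriting this as $\{a_r\}\equiv-\{\Phi_{r-1}(v)\}$ in $K^M_1(k(w_i))/p$ yields
\[
\{a_1,\dots,a_r\} = -\{a_1,\dots,a_{r-1},\Phi_{r-1}(v)\}\in K^M_r(k(w_i))/p,
\]
and I would kill the right-hand side by combining Lemmas \ref{lem:basicfact} and \ref{lem:agreegeneric} (together giving vanishing of $\{\alpha_1,\dots,\alpha_{r-1},\Phi_{r-1}\}$ at the generic point of $Q_{r-1}$) with Lemma \ref{lem:specialize}, specializing to $x_i$.

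Part (2) at level $r$ then follows easily: for $q=(x_0,w)\in Q_r=Q_{r-1}\times_{S'}P_r$, the identity $\Phi_r|_q=\Phi_{r-1}|_{x_0}\cdot\Psi_r|_w$ in the field $k(q)$ forces one factor to vanish, so either $\Phi_{r-1}|_{x_0}=0$ (apply the inductive hypothesis at level $r-1$ to $x_0$) or $\Psi_r|_w=0$ (apply part (1) at level $r$, just proved, to $w$). I expect the main obstacle to be the final step of part (1)'s nondegenerate case—justifying the passage from Lemma \ref{lem:basicfact}'s generic vanishing of $\{a_1,\dots,a_{r-1},\Phi_{r-1}(\bfx)\}$ to vanishing at the concrete point $x_i$. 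Lemma \ref{lem:agreegeneric} identifies the line-bundle construction of $\Phi_{r-1}$ with the combinatorial polynomial at the generic point of $Q_{r-1}$, and Lemma \ref{lem:specialize} transports the vanishing to $x_i$; the case hypothesis $\Phi_{r-1}|_{x_i}\ne 0$ (plus smoothness of $Q_{r-1}$ as an iterated projective bundle over $S'$) supplies exactly the nondegeneracy required to apply it.
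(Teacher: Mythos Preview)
Your proof is correct and follows essentially the same route as the paper's: both argue by induction on $r$, use the product decomposition $\Psi_r=\prod\epsilon|_{w_j}$ to find a factor with $\epsilon|_{w_j}=0$, invoke Lemma~\ref{lem:pth-power} to make $a_r\Phi_{r-1}(v)$ a $p$th power, and then appeal to Lemma~\ref{lem:basicfact} to kill the resulting symbol. The only differences are organizational---the paper first reduces to the case $\Phi_{r-1}|_{x_j}\ne0$ for all $j$ and then treats parts (1) and (2) simultaneously, whereas you handle part (1) with an explicit case split and deduce part (2)---and you are more explicit than the paper about the specialization step (via Lemmas~\ref{lem:agreegeneric} and~\ref{lem:specialize}), which the paper leaves implicit in its citation of Lemma~\ref{lem:basicfact}.
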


\begin{proof}
Since $\Phi_r=\Phi_{r-1}\otimes\Psi_r$, the assumption that $\Psi_r|_w=0$
implies that $\Phi_r|_q=0$ for any $x_0\in Q_{r-1}$ over $s$. 
Conversely, if $\Phi_r|_q=0$ then either $\Psi_r|_w=0$ or 
$\Phi_{r-1}|_{x_0}=0$.
Since $\Phi_0\ne0$, we may proceed by induction on $r$ and assume that
$\Phi_{r-1}|_{x_j}\ne0$ for each $j$,
so that $\Phi_r|_q=0$ is equivalent to $\Psi_r|_w=0$.

By construction, the $p$-form on $L=H_r\otimes K_{r-1}$ is 
$\phi(u_r\otimes v)=a_r\Phi_{r-1}(v)$, where $u_r$ generates the vector 
space $H_r|_s$ and $v$ is a section of $K_{r-1}$. Since $\Psi_r|_w$ is the
product of the forms $\epsilon|_{w_j}$, some $\epsilon|_{w_j}=0$.
Lemma \ref{lem:pth-power} implies that 
$a_r\Phi_{r-1}(v)$ is a $p$th power in $k({x_j})$, and hence in $k(w)$,
for any generator $v$ of $K_{r-1}|_{x_j}$.
By Lemma \ref{lem:basicfact},
$\{ a_1,\dots,a_{r-1},\Phi_{r-1}\}=0$ and hence
\[ 
\{ a_1,\dots,a_r\} = \{ a_1,\dots,a_{r-1}, a_r\Phi_{r-1} \} = 0
\]
in $K^M_r(k(w))/p$, as claimed.
\end{proof}

We conclude this section with some identities in $CH(P_n)/p\,CH(P_n)$,
given in \ref{cor:RtoCH}.
To simplify the statements and proofs below,
we write $\CH(X)$ for $CH(X)/p\,CH(X)$, and adopt the following notation.

\begin{defn}
Set $\eta=c_1(H_n)\in \CH^1(S')$, and $\gamma=c_1(\bLbox)\in\CH^1(P_n)$.
Writing $\bP$ for the bundle $\bP(\cO\oplus H_n\oo K_{n-1})$ over $Q_{n-1}$, 
let $c \in \CH(\bP)$ denote $c_1(\bL)$ and let $\kappa\in\CH(Q_{n-1})$ 
denote $c_1(K_{n-1})$. We write $c_j,\kappa_j\in\CH(P_n)$ for
the images of $c$ and $\kappa$ under the $j$th coordinate pullbacks 
$\CH(Q_{n-1})\to\CH(\bP)\to\CH(P_n)$.
\end{defn}

\begin{lem}\label{Lemma12}
Suppose that $H_1,\dots,H_{n-1}$ are trivial. Then

(a) $\gamma^{p^n}=\gamma^{p^{n-1}}\eta^d$ in $\CH(P_n)$, 
where $d=p^n-p^{n-1}$;

(b) If in addition $H_n$ is trivial, then $\gamma^d=-\prod c_j \kappa_j^e$,
where $e=p^{n-1}-1$.

(c) If $S'=\Spec k$ then the zero-cycles $\kappa^e\in\CH_0(Q_{n-1})$
and $\gamma^d\in\CH_0(P_n)$ have 
\[ \deg(\kappa^e)\equiv(-1)^{n-1} \quad\text{and}\quad 
\deg(\gamma^d)\equiv-1 \quad\text{modulo~} p. \]
\end{lem}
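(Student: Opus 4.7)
The plan is to treat the three parts in sequence, each reducing to a single combinatorial input once the bundle relations $c_j^m = c_j \ell_j^{m-1}$ with $\ell_j = \eta + \kappa_j$ are exploited via Frobenius. For part (a) I first aim to establish the key vanishing $\kappa^{p^{n-1}} \equiv 0$ in $\CH(Q_{n-1})$, which I expect to be the main obstacle. I would prove it by induction on $r$ for the sharper claim $(\kappa^{(r)})^{p^r} \equiv 0$ in $\CH(Q_r)$, where $\kappa^{(r)} := c_1(K_r)$: using $K_r = K_{r-1}\boxtimes\bLbox$ to decompose $\kappa^{(r)} = \kappa_0 + \sum_{j=1}^{p-1} c_{r,j}$ on $Q_r = Q_{r-1}\times_{S'} P_r$, Frobenius splits $(\kappa^{(r)})^{p^r}$ into $\kappa_0^{p^r}$ (vanishing by induction on the pullback from $Q_{r-1}$) and $\sum_j c_{r,j}^{p^r}$; each term $c_{r,j}^{p^r} = c_{r,j}(\kappa^{(r-1)}_j)^{p^r - 1}$ vanishes because $\eta_r = c_1(H_r) = 0$ and $p^r - 1 \ge p^{r-1}$. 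Granted this vanishing, part (a) follows by writing $\ell_j^{p^n - 1} = (\ell_j^{p^{n-1}})^{p-1}\,\ell_j^{p^{n-1} - 1} \equiv \eta^d\,\ell_j^{p^{n-1} - 1}$, so that $\gamma^{p^n} \equiv \sum c_j \ell_j^{p^n - 1} \equiv \eta^d \sum c_j \ell_j^{p^{n-1} - 1} \equiv \eta^d\,\gamma^{p^{n-1}}$.

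For part (b), $H_n$ trivial forces $\eta = 0$ and $\ell_j = \kappa_j$, so Frobenius yields $\gamma^{p^{n-1}} \equiv \sum c_j \kappa_j^e$. I will then expand $\gamma^d = (\gamma^{p^{n-1}})^{p-1}$ by the multinomial theorem: any multi-index $\alpha$ with some $\alpha_j \ge 2$ contributes zero, since its $\kappa_j$-exponent satisfies $\alpha_j e \ge 2(p^{n-1}-1) \ge p^{n-1}$ and $\kappa_j^{p^{n-1}} \equiv 0$. Among $\alpha \in \{0,1\}^{p-1}$ with $|\alpha| = p - 1$ only the all-ones index survives; its multinomial coefficient is $(p-1)! \equiv -1 \pmod p$ by Wilson's theorem, giving $\gamma^d \equiv -\prod_j c_j \kappa_j^e$.

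For part (c), $S' = \Spec k$ makes every $H_i$ automatically trivial, and I will prove by induction on $r$ the paired statement $\deg((\kappa^{(r)})^{p^r - 1}) \equiv (-1)^r$ on $Q_r$ and $\deg(\gamma_r^{p^r - p^{r-1}}) \equiv -1$ on $P_r$, the base case $r = 1$ being a direct calculation on $(\bP^1)^{p-1}$ using $(p-1)! \equiv -1 \pmod p$ and $\deg(h_j) = -1$. For the inductive step, the level-$r$ analogue of part (b) yields $\gamma_r^{d_r} \equiv -\prod c_{r,j}(\kappa^{(r-1)}_j)^{e_{r-1}}$. Pushing forward along the $(\bP^1)^{p-1}$-bundle $P_r \to Q_{r-1}^{p-1}$, the projection formula with $\pi_{j,*}(c_{r,j}) = -1$ yields $\deg(\gamma_r^{d_r}) \equiv -(-1)^{p-1}\deg((\kappa^{(r-1)})^{e_{r-1}})^{p-1} \equiv -1 \pmod p$, by Fermat's little theorem and the inductive hypothesis. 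Finally, expand $(\kappa^{(r)})^{p^r - 1} = (\kappa_0 + \gamma_r)^{p^r - 1}$ on the product $Q_r = Q_{r-1}\times P_r$ by the binomial theorem; since $\dim Q_{r-1} = p^{r-1}-1$ and $\dim P_r = p^r - p^{r-1}$, only the term $\binom{p^r - 1}{p^{r-1}-1}\kappa_0^{p^{r-1}-1}\gamma_r^{p^r-p^{r-1}}$ contributes a nonzero degree, and Lucas' theorem gives $\binom{p^r-1}{p^{r-1}-1} \equiv 1 \pmod p$; multiplicativity of degrees then yields $(-1)^{r-1}\cdot(-1) = (-1)^r$. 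The statement $\deg(\gamma^d) \equiv -1$ is the inductive statement at level $r = n$.
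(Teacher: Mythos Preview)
Your proof is correct and follows essentially the same route as the paper: establish $\kappa^{p^{n-1}}\equiv0$, then use Frobenius on $\gamma=\sum c_j$ together with $c_j^m=c_j(\eta+\kappa_j)^{m-1}$ for part (a); the $b_j^2=0$ trick with Wilson's theorem for part (b); and the same two-pronged induction on $r$ for part (c). The one noteworthy difference is your proof of the key vanishing $\kappa^{p^{n-1}}\equiv0$: you run an inductive Frobenius argument on $(\kappa^{(r)})^{p^r}$, whereas the paper observes in one line that when $H_1,\dots,H_{n-1}$ are trivial the bundle $K_{n-1}$ is pulled back from $Q_{n-1}(\Spec k;\cO,\dots,\cO)$, a variety of dimension $e=p^{n-1}-1$, so $\kappa^{e+1}=0$ already in integral Chow. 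Your argument is self-contained and computational; the paper's is quicker and gives the vanishing integrally, not just mod $p$. In part (c) you invoke Lucas' theorem on $\binom{p^r-1}{p^{r-1}-1}$ where the paper instead uses Frobenius to write $c_1(K_n)^{p^{n-1}}\equiv\gamma^{p^{n-1}}$ and then $c_1(K_n)^{p^n-1}=\kappa^e\gamma^d$ via $\gamma^{d+1}=0$; these are equivalent bookkeeping choices.
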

\goodbreak

\begin{proof}
First note that because $K_{n-1}$ is defined over the
$e$-dimensional variety $Q_{n-1}(\Spec k;H_1,...,H_{n-1})$,
the element $\kappa=c_1(K_{n-1})$ 
satisfies $\kappa^{p^{n-1}}\!=0$. Thus 
$(\eta+\kappa)^{p^{n-1}}=\eta^{p^{n-1}}$ and hence $(\eta+\kappa)^d=\eta^d$.
Now the element $c=c_1(\bL)$ satisfies the relation 
$c^2=c(\eta+\kappa)$ in $\CH(\bP)$ and hence
\[
c^{p^n}=c^{p^{n-1}}(\eta+\kappa)^d = c^{p^{n-1}}\eta^d
\]
in $\CH^{p^n}(\bP)$. Now recall that $P_n=\prod\bP$.
Then $\gamma=\sum c_j$ and
\[
\gamma^{p^n} = \sum c_j^{p^n}=\sum c_j^{p^{n-1}}\eta^d
	= \gamma^{p^{n-1}}\eta^d. 
\]
When $H_n$ is trivial we have $\eta=0$ and hence $c^2=c\,\kappa$.
Setting $b_j=c_j^{p^{n-1}}=c_j\kappa_j^e$, we have 
$\gamma^d=\gamma^{p^{n-1}(p-1)}=(\sum b_j)^{p-1}$.
To evaluate this, we use the algebra trick that since $b_j^2=0$ for
all $j$ and $p=0$ we have $(\sum b_j)^{p-1}=(p-1)!\prod b_j=-\prod b_j$.

For (c), note that if $S'=\Spec k$ then $\eta=0$ and $\gamma^d$ is a 
zero-cycle on $P_n$. By the projection formula for $\pi:P_n\to\prod Q_{n-1}$,
part (b) yields $\pi_*\gamma^d=(-1)^p\prod \kappa_j^e$. 
Since each $Q_{n-1}$ is an iterated projective space bundle,
$CH(\prod Q_{n-1})=\otimes_1^{p-1}CH(Q_{n-1}$, and the degree of
$\prod \kappa_j^e$ is the product of the degrees of the $\kappa_j^e$.
By induction on $n$, these degrees are all the same, and nonzero, so 
$\deg(\prod\kappa_j^e)\equiv1\pmod{p}$.

It remains to establish the inductive formula for $\deg(\kappa^e)$.
Since it is clear for $n=0$, and the $Q_i$ are projective space bundles,
it suffices to compute that $c_1(K_n)^{p^n-1}=\kappa^e\gamma^d$ in
$\CH(Q_n)=\CH(Q_{n-1})\otimes\CH(P_n)$. 
Since $\kappa^{e+1}=0$ and $c_1(K_n)=\kappa+\gamma$ we have
\[
c_1(K_n)^{p^{n-1}}=\kappa^{e+1}+\gamma^{p^{n-1}}=\gamma^{p^{n-1}},
\]
and hence $c_1(K_n)^d=\gamma^d$. Since $\gamma^{d+1}=0$, 
this yields the desired calculation:
\[
c_1(K_n)^{p^n-1} = c_1(K_n)^e c_1(K_n)^d = (\kappa+\gamma)^e\gamma^d
= \kappa^e\gamma^d.  \qedhere
\]
\end{proof}

\begin{cor}\label{cor:RtoCH}
There is a ring homomorphism 
$\bF_p[\lambda,z]/(z^p-\lambda^{p-1}z)\to \CH(P_n)$,
sending $\lambda$ to $\eta^{p^{n-1}}$ and $z$ to $\gamma^{p^{n-1}}$.
\end{cor}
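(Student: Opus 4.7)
The strategy is completely straightforward once Lemma \ref{Lemma12}(a) is in hand: the claim is essentially a restatement of that identity in the language of ring presentations. First I would define a ring map from the polynomial ring $\bF_p[\lambda,z]$ to $\CH(P_n)$ by the substitution $\lambda\mapsto\eta^{p^{n-1}}$ and $z\mapsto\gamma^{p^{n-1}}$; because $\CH(P_n)$ is a commutative $\bF_p$-algebra this extends uniquely to a ring homomorphism from the polynomial ring.

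The only work is to check that this map kills the relation $z^p-\lambda^{p-1}z$, so that it descends to the quotient $\bF_p[\lambda,z]/(z^p-\lambda^{p-1}z)$. Substituting, one sees that the image of $z^p$ is $\gamma^{p\cdot p^{n-1}}=\gamma^{p^n}$, while the image of $\lambda^{p-1}z$ is
\[
\eta^{(p-1)p^{n-1}}\,\gamma^{p^{n-1}} \;=\; \eta^{d}\,\gamma^{p^{n-1}},
\]
where $d=p^n-p^{n-1}$. So I need exactly the identity $\gamma^{p^n}=\gamma^{p^{n-1}}\eta^d$ in $\CH(P_n)$, and this is precisely part (a) of Lemma \ref{Lemma12} (which is applied under the hypothesis that $H_1,\dots,H_{n-1}$ are trivial, the standing hypothesis of this corollary).

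There is no real obstacle: the nontrivial content has already been packaged into Lemma \ref{Lemma12}(a), whose proof used the nilpotence $\kappa^{p^{n-1}}=0$ on $Q_{n-1}$ together with the projective bundle relation $c^2=c(\eta+\kappa)$. The corollary is just the algebraic repackaging of that single relation as the defining relation of a concrete quotient ring. One sentence of verification suffices after invoking Lemma \ref{Lemma12}(a).
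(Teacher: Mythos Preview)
Your proposal is correct and is exactly the intended argument: the paper gives no separate proof for this corollary because it follows immediately from Lemma \ref{Lemma12}(a), and your verification that the relation $z^p-\lambda^{p-1}z$ maps to $\gamma^{p^n}-\eta^d\gamma^{p^{n-1}}=0$ is precisely the unpacking of that implication.
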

\goodbreak

\section{Model for $p$ moves}\label{sec:ModelforMoves}

In this section we construct maps $S_{n-1}\to S_n$ which
model the $p$ moves of type $C_n$ defined in \ref{C-move}.
Each such move introduces $p^{n-1}-p^{n-2}$ new variables, and
will be modelled by a map $Y_r\to Y_{r-1}$ of relative dimension 
$p^{n-1}-p^{n-2}$, using the $P_{n-1}$ construction in \ref{PQtower}. 
The result (Definition \ref{Ytower}) will be a tower of the form:
\begin{equation*}
\begin{matrix}
J_{n-1}=L_p & & L_{p-1} & & L_2 && L_1~\phantom{\lra}&\hspace{-5pt} 
L_0 = J_n \quad \\
S_{n-1}= Y_p&\map{f_p}& Y_{p-1} &\map{} \dotsm\to& Y_{2} &\map{f_2}& 
Y_{1} \map{f_1}&\hspace{-9pt} Y_0 = S_n.
\end{matrix}
\end{equation*}

Fix $n\ge2$, a variety $S_n$, and line bundles 
$H_1,\dots,H_{n-2}$, $H_{n}$ and $J_n$ on $S_n$. The first step in 
the tower is to form $Y_0=S_n$ and 
$Y_1 = P_{n-1}(S_n;H_1,\dots,H_{n-2},J_n)$, with line bundles
$L_0=J_n$ and $L_1=H_n\oo\bLbox$ as in \ref{PQtower}. 
In forming the other $Y_r$, the base in the $P_{n-1}$ construction 
\ref{PQtower} will become $Y_{r-1}$ and only the final line bundle
will change (from $J_n$ to $L_{r-1}$). Here is the formal definition.

\begin{defn}\label{Ytower}
For $r>1$, we define morphisms $f_{r}:Y_{r}\to Y_{r-1}$ and line bundles 
$\bL_r^\boxtimes$ and $L_r$ on $Y_r$ as follows. Inductively, we are
given a morphism $f_{r-1}:Y_{r-1}\to Y_{r-2}$ and line
bundles $L_{r-1}$ on $Y_{r-1}$, $L_{r-2}$ on $Y_{r-2}$.
Set $\bL_r^\boxtimes=\bLbox$,
\begin{equation*}
Y_r = P_{n-1}(Y_{r-1};H_1,\dots,H_{n-2},L_{r-1})\map{f_r}Y_{r-1}, 
\quad L_r=f_r^*f_{r-1}^*(L_{r-2})\oo\bL_r^\boxtimes.  
\end{equation*}
\goodbreak
\noindent
Finally, we write $S_{n-1}$ for $Y_p$ and set $J_{n-1}=L_p$,
$J'_{n-1}=f_{p}^*(L_{p-1})$.  By Lemma \ref{dim/n=2}, 
$\dim(Y_r/Y_{r-1})=p^{n-1}-p^{n-2}$ and hence
$\dim(S_{n-1}/S_n) = p^n-p^{n-1}$.
\end{defn}

For example, when $n=2$ and and $H_1$ is trivial, this tower is exactly the tower of \ref{def:tower}: we have 
$Y_r=P_1(Y_{r-1};L_{r-1})=\prod\bP^1(\cO\oplus L_{r-1})$. 

\begin{subrem}
The line bundles $J_{n-1}$ and $J_{n-1}'$ will be the line bundles of the
Chain Lemma \ref{thm:chainlemma}. The rest of tower \eqref{eq:Rtower}
will be obtained in Definition \ref{Rtower} by repeating this construction 
and setting $S=S_1$.
\end{subrem}

The rest of this section, culminating in Theorem \ref{thm:part6},
is devoted to proving part (6) of the Chain Lemma, that the
degree of the zero-cycle $c_1(J_1)^{\dim\, S}$ is relatively prime to $p$.
In preparation, we need to compare
the degrees of the zero-cycles $c_1(J_{n-1})^{\dim S_{n-1}}$ on $S_{n-1}$
and $c_1(J_{n})^{\dim S_{n}}$ on $S_{n}$. 
In order to do so, we introduce the following algebra.

\begin{defn}\label{def:Ar} 
We define the graded $\bF_p$-algebra $A_r$ and $\bar A_r$ by
$\bar A_r=A_r/\lambda_{-1}A$ and:
\[ A_r = 
\bF_p[\lambda_{-1},\lambda_0,\dots,\lambda_r,z_1,\dots,z_r]/
(\{z_i^p-\lambda_{i-1}^{p-1}z_i, \lambda_i-\lambda_{i-2}-z_i
~\vert~ i= 1,\dotsc r\}).
\]
\end{defn}

\begin{subrem}\label{RtoCHY}
By Corollary \ref{cor:RtoCH},  
there is a homomorphism $A_p\smap{\rho}\CH(Y_p)$, sending $\lambda_r$ to 
$c_1(L_r)^{p^{n-2}}$ and $z_r$ to $c_1(\bL_r^\boxtimes)^{p^{n-2}}$\!. 
When $H_{n-1}$ is trivial, $\rho$ factors through $\bar A_p$.
\end{subrem}

\begin{lem}\label{lem:19}
In $\bar A_r$, every element $u$ of degree~1 satisfies 
$u^{p^2}=u^p\lambda_0^{p^2-p}$.
\end{lem}

\begin{proof}
We will show that $\bar A_r$ embeds into a product of graded rings of the form
$\Lambda_k=\bF_p[\lambda_0][v_1,\dots,v_k]/(v_1^p,\dots,v_k^p)$.
In each entry, $u=a\lambda_0+v$ with $v^p=0$ and $a\in\bF_p$, so
$u^p=a\lambda_0^p$ and $u^{p^2}=a\lambda_0^{p^2}$, whence the result.

Since $\bar A_{r+1}=\bar A_{r}[z]/(z^p-\lambda_{r}^{p-1}z)$ is flat over
$\bar A_r$, it embeds by induction into a product of graded rings of the form
$\Lambda'=\Lambda_k[z]/(z^p-u^{p-1}z)$, $u\in\Lambda_k$. 
If $u\ne0$, there is an embedding of $\Lambda'$ into 
$\prod_{i=0}^{p-1}\Lambda_k$ whose $i$th component sends $z$ to $iu$.
If $u=0$, then $\Lambda'\cong\Lambda_{k+1}$.
\end{proof}

\begin{subrem}\label{rem:indep}
It follows that if $m>0$ and $(p^2-p)\mid m$ then
$u^{kp+m}=\lambda_0^m u^{kp}$.
\end{subrem}

\begin{prop}\label{prop:cor22} In $\bar A_r$, 
 $\lambda_p^{p^N-p}=\lambda_0^{p^N-p^2}(\prod z_i^{p-1}\!+T\lambda_0)$,
where $\deg(T)\!=\!p^2\!-\!p\!-\!1$.
\end{prop}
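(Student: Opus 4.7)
The plan is to prove the identity in two steps: first reduce the case of general $N\ge 2$ to an identity for $\lambda_p^{p^2-p}$, and then verify that identity in the Artinian graded quotient $\bar{A}_p/\lambda_0$, whose top graded piece turns out to be one-dimensional.

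For the first step I would apply Remark \ref{rem:indep} to $u=\lambda_p$ with $k=p-1$ and $m=p^N-p^2$. The divisibility $(p^2-p)\mid m$ follows from $m=p^2(p^{N-2}-1)$ together with $(p-1)\mid(p^{N-2}-1)$ for $N\ge 2$; the case $N=2$ (where $m=0$) is tautological. This yields $\lambda_p^{p^N-p}=\lambda_0^{p^N-p^2}\,\lambda_p^{p^2-p}$, so it suffices to establish $\lambda_p^{p^2-p}=\prod_{i=1}^{p}z_i^{p-1}+\lambda_0\,T$ in $\bar{A}_p$; the degree of $T$ is then forced to be $p^2-p-1$ by homogeneity. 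Equivalently, I need to show $\lambda_p^{p^2-p}\equiv\prod_{i=1}^{p}z_i^{p-1}\pmod{\lambda_0}$.

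For the second step I would work in the ring $R_r:=\bar{A}_r/\lambda_0=\bF_p[z_1,\dots,z_r]/(z_i^p-\mu_{i-1}^{p-1}z_i)$, writing $\mu_i$ for the image of $\lambda_i$ (so $\mu_0=0$, $\mu_i=\mu_{i-2}+z_i$), and prove by induction on $r$ (for $1\le r\le p$) the stronger statement $\mu_r^{r(p-1)}=\prod_{i=1}^{r}z_i^{p-1}$. Since $R_r=R_{r-1}[z_r]/(z_r^p-\mu_{r-1}^{p-1}z_r)$ is free of rank $p$ over $R_{r-1}$ on $1,z_r,\dots,z_r^{p-1}$, iterating gives the $\bF_p$-basis of monomials $\prod z_i^{a_i}$ with $0\le a_i\le p-1$; in particular the top graded piece, in degree $r(p-1)$, is one-dimensional and spanned by $\prod_{i=1}^{r}z_i^{p-1}$. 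Expanding $\mu_r^{r(p-1)}=(\mu_{r-2}+z_r)^{r(p-1)}$ binomially and applying the elementary reduction $z_r^{a(p-1)+b}=\mu_{r-1}^{a(p-1)}z_r^{b}$ (valid for $1\le b\le p-1$, by iterating $z_r^p=\mu_{r-1}^{p-1}z_r$), the coefficient of $z_r^{p-1}$ in $R_{r-1}$ equals
\[
\sum_{l=1}^{r}\binom{r(p-1)}{l(p-1)}\,\mu_{r-2}^{(r-l)(p-1)}\,\mu_{r-1}^{(l-1)(p-1)}.
\]

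The main obstacle is to see that only the term $l=r$ survives modulo $p$. This is a direct Lucas-theorem computation: for $1\le r\le p-1$ one has the base-$p$ expansion $r(p-1)=(p-r)+(r-1)p$, hence $\binom{r(p-1)}{l(p-1)}\equiv\binom{p-r}{p-l}\binom{r-1}{l-1}\pmod p$, which is nonzero exactly when $l=r$; the case $r=p$ (digits $(0,p-1)$) is handled analogously and again forces $l=p=r$. At $l=r$ the binomial coefficient is $1$ and the surviving factor is $\mu_{r-1}^{(r-1)(p-1)}$, which by the inductive hypothesis equals $\prod_{i=1}^{r-1}z_i^{p-1}$. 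Since $\mu_r^{r(p-1)}$ lies in the one-dimensional top grade of $R_r$, this pins it down as $\prod_{i=1}^{r}z_i^{p-1}$. Taking $r=p$ and lifting back to $\bar{A}_p$ shows $\lambda_p^{p^2-p}-\prod_{i=1}^{p}z_i^{p-1}\in\lambda_0\,\bar{A}_p$, which combined with Step 1 yields the proposition.
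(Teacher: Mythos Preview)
Your proof is correct and follows the paper's overall strategy: reduce to $N=2$ via Remark \ref{rem:indep}, then show by induction on $r$ that the coefficient of $\prod_{i=1}^r z_i^{p-1}$ in $\lambda_r^{r(p-1)}$ is~$1$. The only real difference is in the inductive step: the paper factors $(z_r+\lambda_{r-2})^{r(p-1)}=(z_r^p+\lambda_{r-2}^p)^{r-1}(z_r+\lambda_{r-2})^{p-r}$ using Frobenius in characteristic $p$, substitutes $z_r^p=z_r\lambda_{r-1}^{p-1}$, and reads off the $z_r^{p-1}$-term directly; you instead expand binomially, reduce powers of $z_r$, and invoke Lucas' theorem to kill the terms with $l\ne r$. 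Your passage to $R_r=\bar A_r/\lambda_0$ and the observation that its top graded piece is one-dimensional is a clean way to discard the lower-$z_r$ terms (the paper does the equivalent bookkeeping by noting $T$ has $z_r$-degree $<p-1$). Both computations are short and yield the same conclusion; the Frobenius factorization is slightly slicker in that it avoids the digit-by-digit Lucas check, while your approach is more explicit about where each term goes.
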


\begin{proof}
By Definition \ref{def:Ar}, $\bar A_p$ is free over $\bF_p[\lambda_0]$, 
with the elements $\prod z_i^{m_i}$ ($0\le m_i<p$) forming a basis. 
Thus any term of degree $p^N-p$ is a linear combination of 
$F=\lambda_0^{p^N-p^2}\prod z_i^{p-1}$ and terms of
the form $\lambda_0^{m_0}\prod z_i^{m_i}$ where $\sum m_i=p^N-p^2$
and $m_0>p^N-p^2$. It suffices to determine the coefficient of $F$ 
in $\lambda_p^{p^N-p}$. Since 
$\lambda_p^{p^N-p}=\lambda_0^{p^N-p^2}\lambda_p^{p^2-p}$ by
Remark \ref{rem:indep}, it suffices to consider $N=2$, when 
$F=\prod z_i^{p-1}$.

As in the proof of Proposition \ref{part6/n=2}, if $p\ge r\ge2$ we 
compute in the ring $\bar A_r$ that
\begin{align*}
\lambda_r^{r(p-1)} =& (z_{r}+\lambda_{r-2})^{p(r-1) + (p-r)} =
(z_{r}^p + \lambda_{r-2}^p)^{r-1}\!\cdot (z_r+\lambda_{r-2})^{p-r} \\
=& (z_r \lambda_{r-1}^{p-1} + 
		\lambda_{r-2}^p)^{r-1} (z_r+\lambda_{r-2})^{p-r}
= z_r^{p-1}\lambda_{r-1}^{(r-1)(p-1)} + T,
\end{align*}
where $T\!\in \bar A_{r-1}[z_r]$ is a homogeneous polynomial of 
total degree $<\!p\!-\!1$ in $z_r$. By induction on $r$,
the coefficient of $(z_1\cdots z_r)^{p-1}$ in $\lambda_r^{r(p-1)}$
is~1 for all $r$.
\end{proof}

\begin{lem}\label{thm16base}
If $S_n=\Spec(k)$ and $c=c_1(J_{n-1})\in CH^1(S_{n-1})$, then
\[ \deg(c^{\dim S_{n-1}})\equiv1\pmod{p}. \]
\end{lem}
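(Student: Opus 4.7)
The plan is to apply the ring homomorphism $\rho : A_p \to \CH(Y_p) = \CH(S_{n-1})$ of Remark~\ref{RtoCHY} to the identity for $\lambda_p^{p^2-p}$ in $\bar A_p$ supplied by Proposition~\ref{prop:cor22} with $N=2$. Writing $c = c_1(L_p) = c_1(J_{n-1})$ and noting that $\dim S_{n-1} = p^{n-2}(p^2-p)$ while $\rho(\lambda_p) = c^{p^{n-2}}$, one identifies
$c^{\dim S_{n-1}} = \rho(\lambda_p)^{p^2-p} = \rho(\lambda_p^{p^2-p})$ in $\CH(Y_p)$.

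The hypothesis $S_n = \Spec(k)$ makes every line bundle on $S_n$ trivial, so $\rho(\lambda_{-1}) = c_1(H_n)^{p^{n-2}} = 0$ and $\rho(\lambda_0) = c_1(J_n)^{p^{n-2}} = 0$. The first vanishing lets $\rho$ descend through $\bar A_p$, and the second kills the $T\lambda_0$ correction term of Proposition~\ref{prop:cor22}. This reduces the statement to
\[
c^{\dim S_{n-1}} \;=\; \prod_{i=1}^{p} c_1(\bL_i^\boxtimes)^{p^{n-1}-p^{n-2}} \;\in\; \CH_0(Y_p),
\]
a product of classes, each pulled back from the corresponding $Y_i$ in the tower.

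The next step is to compute the degree by iterating the pushforwards along $Y_p \map{f_p} Y_{p-1} \to \cdots \to Y_0 = \Spec(k)$. By the projection formula, each $(f_i)_*$ strips off one factor, and on any geometric fiber of $f_i : Y_i \to Y_{i-1}$ the restriction of $c_1(\bL_i^\boxtimes)^{p^{n-1}-p^{n-2}}$ is the top-codimensional class $\gamma^{p^{n-1}-p^{n-2}}$ on the $P_{n-1}$ construction over a point, where all relevant line bundles become trivial. Lemma~\ref{Lemma12}(c), applied with $n$ there replaced by $n-1$, provides this fiber degree as a unit modulo $p$, so $(f_i)_*\bigl(c_1(\bL_i^\boxtimes)^{p^{n-1}-p^{n-2}}\bigr) \equiv \pm [Y_{i-1}] \pmod p$. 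After $p$ successive pushforwards, the resulting degree is a unit modulo $p$.

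The main obstacle is the elimination of the $T\lambda_0$ correction term, which is precisely what the hypothesis $S_n = \Spec(k)$ delivers; without it one would have to control a sum of mixed terms involving nontrivial Chern classes of $J_n$. Sign bookkeeping between the asserted $+1$ and the natural output of the iterated pushforward is a secondary subtlety, but it is irrelevant for part~(6) of the Chain Lemma, which only requires that the top degree be coprime to $p$.
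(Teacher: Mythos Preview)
Your proposal is correct and follows essentially the same route as the paper: apply $\rho$ and Proposition~\ref{prop:cor22} (with $N=2$) to reduce $c^{\dim S_{n-1}}$ to $\prod_i c_1(\bL_i^\boxtimes)^{p^{n-1}-p^{n-2}}$, then evaluate the degree of this product using Lemma~\ref{Lemma12}(c) on each fiber. Your justification that $\rho(\lambda_0)=0$ (because $J_n$ is a line bundle on $\Spec(k)$) is exactly the ``dimensional reason'' the paper invokes, and your iterated projection-formula argument is a more explicit version of the paper's remark that $CH_0(Y_p)$ is a tensor product over the steps of the tower; your observation about the sign (the product of $p$ copies of $-1$ versus the stated $+1$) is apt and matches the discrepancy between Lemma~\ref{thm16base} and Theorem~\ref{thm:part6}, which is immaterial for part~(6) of the Chain Lemma.
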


\begin{proof}
Set $d=\dim(S_{n-1})=p^n-p^{n-1}$; under the map 
$A_p\smap{\rho}\CH(S_{n-1})$
of \ref{RtoCHY}, the degree $p^2-p$ part of $A_p$ maps to $CH^d(S_{n-1})$.
In particular, the zero-cycle $c^{d}=\rho(\lambda_p)^{p^2-p}$ 
equals the product of the $\rho(z_i)^{p-1}=c_1(\bL_i^\boxtimes)^{d/p}$ 
by Proposition \ref{prop:cor22} (the $T\lambda_0^\ast$ term maps to
zero for dimensional reasons). Because $S_{n-1}=Y_p$ is a product of 
iterated projective space bundles, $CH_0(Y_p)$ is the tensor product of their
$CH_0$ groups, and the degree of $c^d$ is the product of the
degrees of the $c_1(\bL_i^\boxtimes)^{d/p}$, 
each of which is $-1$ by Lemma \ref{Lemma12}. 
It follows that $\deg(c^d)\equiv1\pmod{p}$.
\end{proof}

\begin{thm}\label{Theorem16}
If $S_n$ has dimension $p^M-p^n$ and $H_1,\dots,H_{n-1}$ are trivial
then the zero-cycles $c_1(J_{n-1})^{\dim S_{n-1}} \in CH_0(S_{n-1})$
and $c_1(J_{n})^{\dim S_{n}} \in CH_0(S_{n})$ have the same 
degree modulo $p$:
$$\deg(c_1(J_{n-1})^{\dim S_{n-1}})=\deg(c_1(J_n)^{\dim S_n})\pmod{p}.$$
\end{thm}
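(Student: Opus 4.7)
The plan is to deduce the theorem from Proposition~\ref{prop:cor22} and the base case Lemma~\ref{thm16base} via a projection-formula argument combined with a reduction to the generic fiber of $\pi \colon S_{n-1} \to S_n$. First I would choose $N = M - n + 2$, so that $p^{n-2}(p^N-p) = p^M - p^{n-1} = \dim S_{n-1}$ and $p^{n-2}(p^N - p^2) = p^M - p^n = \dim S_n$. Applying the homomorphism $\rho$ of Remark~\ref{RtoCHY}---which factors through $\bar{A}_p$ by the triviality hypothesis on $H_1, \dots, H_{n-1}$---to the identity
\[
\lambda_p^{p^N-p} = \lambda_0^{p^N-p^2}\Bigl(\prod_{i=1}^{p} z_i^{p-1} + T\lambda_0\Bigr)
\]
of Proposition~\ref{prop:cor22} yields, in $\CH_0(S_{n-1})/p$,
\[
c_1(J_{n-1})^{\dim S_{n-1}} = \pi^* c_1(J_n)^{\dim S_n}\cdot\Bigl(\prod_{i=1}^p \rho(z_i)^{p-1} + \rho(T)\,\pi^* c_1(J_n)^{p^{n-2}}\Bigr).
\]
The second summand is divisible by $\pi^* c_1(J_n)^{\dim S_n + p^{n-2}}$, the pullback of a class in $\CH^k(S_n)$ with $k>\dim S_n$, hence zero.

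Applying $\pi_*$ and the projection formula to what remains gives
\[
\deg c_1(J_{n-1})^{\dim S_{n-1}} \equiv \deg\Bigl(c_1(J_n)^{\dim S_n}\cdot \pi_*\prod_{i=1}^{p}\rho(z_i)^{p-1}\Bigr) \pmod p,
\]
reducing the theorem to the assertion $\pi_*\prod_{i=1}^{p}\rho(z_i)^{p-1} \equiv 1 \pmod p$ in $\CH^0(S_n)/p$. I would verify this by restriction to the generic point $\eta$ of $S_n$: the fiber over $\eta$ carries a $Y_p$-tower over $\Spec k(\eta)$ in which every initial bundle is trivial ($H_1,\dots,H_{n-1}$ by hypothesis, and $H_n|_\eta$, $J_n|_\eta$ because every line bundle on a field is trivial). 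Applying the identity above to this fiber---where now $\dim S_n = 0$ and the first factor collapses to~$1$---gives $c_1(J_{n-1}|_\eta)^{p^n-p^{n-1}} = \prod_{i=1}^{p}\rho(z_i)^{p-1}|_\eta$, whose degree over $k(\eta)$ is $\equiv 1 \pmod p$ by Lemma~\ref{thm16base} applied with base field $k(\eta)$. Since a codimension-zero class on the irreducible variety $S_n$ is determined by its generic fiber degree, the required identification follows.

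The main obstacle will be verifying the functoriality of $\rho$ and of the $Y_p$-tower construction under base change to $\eta$, so that the identity derived in the first step restricts correctly to the fiber and Lemma~\ref{thm16base} applies there verbatim. Once this compatibility is in place, the remainder of the proof is the formal projection-formula manipulation above combined with the already-established base case.
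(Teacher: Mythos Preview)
Your proposal is correct and follows essentially the same route as the paper: both apply the homomorphism $\rho$ of Remark~\ref{RtoCHY} to the identity of Proposition~\ref{prop:cor22}, kill the $T\lambda_0$ term by the dimension bound on $S_n$, and then evaluate the pushforward of $\rho(y)\cdot\prod_i\rho(z_i)^{p-1}$ along $\pi\colon S_{n-1}\to S_n$. The only packaging difference is that the paper handles this last pushforward by invoking Lemma~\ref{lem:deg(yz)} directly, whereas you restrict to the generic fibre and appeal to Lemma~\ref{thm16base} over $k(\eta)$; your base-change concern is routine since the $Y_r$-tower is built from iterated projective bundles and $\rho$ from first Chern classes, both compatible with pullback.
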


\begin{proof}
By \ref{RtoCHY}, 
there is a homomorphism $A_p\smap{\rho} \CH(S_{n-1})$, sending $\lambda_r$ to 
$c_1(L_r)^{p^{n-2}}$ and $z_r$ to $c_1(\bL_r^\boxtimes)^{p^{n-2}}$\!. 
Because $H_{n-1}$ is trivial, $\rho$ factors through $\bar A_p$.

Set $N=M-n+2$ and $y=\lambda_0^{p^{N}-p^2}$\!, so 
$\rho(y)=c_1(J_n)^{\dim S_n}\in\CH_0(S_n)$. 
From Proposition \ref{prop:cor22} we have
$\lambda_p^{p^{N}-p}\equiv y\prod z_i^{p-1}$ modulo $\ker(\rho)$.
From Lemma \ref{lem:deg(yz)}, the degree of this element equals the degree of
$y$ modulo~$p$.
\end{proof}

\subsection*{The $p$-forms}
We now define the $p$-forms on the line bundles $J_{n-1}$ and $J'_{n-1}$.
using the tower \eqref{Ytower}. 
Suppose that the line bundles $L_{-1}=H_n$ and $L_0=J_n$ on $S_n$ are 
equipped with the $p$-forms $\beta_{-1}$ and $\beta_0$. 
We endow the line bundle $L_1$ in Definition \ref{Ytower} with the 
$p$-form $\beta_1=f^*(\beta_{-1})\oo\Psi_{n-1}(\beta_0)$;
inductively, we endow the line bundle $L_r$ with the $p$-form 
\[
\beta_r= f^*(\beta_{r-2})\oo\Psi_{n-1}(\beta_{r-1}).
\]

\begin{exx} 
When $n=2$ and $H_1$ is trivial saw that
the tower \ref{Ytower} is exactly the tower of \ref{def:tower}. In addition,
the $p$-form $\beta_r=\Psi_1(\beta_{r-1})$ agrees with the $p$-form
$\varphi_r=f^*(\varphi_{r-2})\oo\Psi_r$ of \ref{def:gamma2}.
\end{exx} 

\begin{lem}\label{lem:agreePsi}
If $\beta_0=\alpha_{n-1}$ and $\beta_{-1}=\alpha_n$, then
(at the generic point of $Y_1$)
the $p$-form $\beta_p$ agrees with the form
$\alpha_n\Psi_{n-1}$ in \eqref{C-move}.
\end{lem}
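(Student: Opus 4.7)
The plan is an induction on $r$ showing that at the generic point of $Y_r$ the $p$-form $\beta_r$ matches the value of the running slot after $r$ iterations of a move of type $C_n$, with the lemma emerging as the terminal case $r=p$ (where the parity of $p$ places the descendant of $\alpha_n$ in the outer tensor factor).

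For the base case $r=1$, I would compute directly from Definition \ref{Ytower}: $\beta_1 = f_1^*(\beta_{-1}) \otimes \Psi_{n-1}(\beta_0) = f_1^*(\alpha_n) \otimes \Psi_{n-1}(\alpha_{n-1})$. Since $Y_1 = P_{n-1}(S_n;H_1,\dots,H_{n-2},J_n)$ is exactly the variety to which Lemma \ref{lem:agreegeneric} applies, the inner factor $\Psi_{n-1}(\alpha_{n-1})$ agrees at the generic point of $Y_1$ with the polynomial $\prod_{i=1}^{p-1}[1-a_{n-1}\Phi_{n-2}(\bfx_{1,i})]$ of \eqref{eq:Psi}. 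Choosing compatible generators as in Definition \ref{def:symbol} then identifies $\beta_1$ at the generic point of $Y_1$ with $a_n\Psi_{n-1}(\bfx_1)$, i.e.\ the form $\alpha_n\Psi_{n-1}$ of \eqref{C-move}.

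For the inductive step I would read the recursion
\[ \beta_r = f_r^*(\beta_{r-2})\otimes\Psi_{n-1}(\beta_{r-1}) \]
as the tensor-level shadow of the slot-swap-and-update $(b_{r-1},b_r)\mapsto (b_r\Psi_{n-1},b_{r-1}^{-1})$ defining \eqref{C-move}: the prefactor $f_r^*(\beta_{r-2})$ carries the value left in the $(n-1)$st slot by the previous move, while $\Psi_{n-1}(\beta_{r-1})$ is the $\Psi_{n-1}$ produced by the $P_{n-1}$ construction \ref{PQtower} with the input unit equal to the generic-point value of $\beta_{r-1}$. At each layer Lemma \ref{lem:agreegeneric} converts this into the symbolic $\Psi_{n-1}(\bfx_r)$ of \eqref{eq:Psi} (with $a_{n-1}$ replaced by the current $\beta_{r-1}$-value), closing the induction. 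Since $p$ is odd, the slot alternation between the roles of $\beta_{r-2}$ and $\beta_{r-1}$ terminates at $r=p$ with $\alpha_n$'s descendant in the outer factor and a $\Psi_{n-1}$-expression in the inner factor — exactly the shape $\alpha_n\Psi_{n-1}$ of \eqref{C-move}, read off at the generic point of $Y_1$ inside the tower.

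The main obstacle will be the parity and inverse bookkeeping: the move of \eqref{C-move} swaps the last two slots and inverts one at each iteration, and these slot-swaps must be matched cleanly against the alternation of $\beta_{r-2}$ and $\beta_{r-1}$ in the recursion across all $p$ layers. Once this bookkeeping is pinned down, the agreement of $\beta_p$ with $\alpha_n\Psi_{n-1}$ reduces to the iterated application of Lemma \ref{lem:agreegeneric} performed step by step along the tower.
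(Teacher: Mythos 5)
Your first paragraph (the case $r=1$) is exactly the paper's argument, and it is the entire content of the lemma: $\beta_1=f^*(\alpha_n)\otimes\Psi_{n-1}(\alpha_{n-1})$ is a form on $L_1$ over $Y_1=P_{n-1}(S_n;H_1,\dots,H_{n-2},J_n)$, and Lemma \ref{lem:agreegeneric} identifies $\Psi_{n-1}(\alpha_{n-1})$ at the generic point of $Y_1$ with the polynomial $\Psi_{n-1}$ of \eqref{eq:Psi}, so that $\beta_1$ agrees with $a_n\Psi_{n-1}(\bfx)$, the new entry produced by a single move of type $C_n$. The ``$\beta_p$'' in the statement has to be read as $\beta_1$ (the form is evaluated at the generic point of $Y_1$ and compared with the output of one move); it is this one-step statement that Definition \ref{Rtower} then invokes at each stage of the tower.

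The inductive superstructure you erect on top of this, with terminal case $r=p$, has a genuine gap --- in fact its final claim is not correct as stated. First, $\beta_p$ is a form on $L_p$ over $Y_p$, so it cannot be ``read off at the generic point of $Y_1$''. Second, after $p$ iterations the generic-point value of $\beta_p$ is the $p$-fold iterated function (the $\gamma_{n-1}$ of \eqref{eq:moves} in $p^{n}-p^{n-1}$ variables), not the single-move form $a_n\Psi_{n-1}(\bfx_1)$; having ``the same shape'' (something times a product of $\Psi_{n-1}$-factors) is not agreement of forms. Third, the inverse bookkeeping you defer is precisely where a literal step-by-step matching of $\beta_r$ with the $r$-th move breaks down: the move of type $C_n$ places $a_{n-1}^{-1}$ in the last slot, whereas the tower never inverts anything, so already at the second step $\beta_2=f^*(\beta_0)\otimes\Psi_{n-1}(\beta_1)$ has generic value $a_{n-1}\cdot\prod_i\bigl[1-\beta_1\Phi_{n-2}(\bfx_{2,i})\bigr]$, while the second move produces $a_{n-1}^{-1}\cdot\prod_i\bigl[1-\beta_1\Phi_{n-2}(\bfx_{2,i})\bigr]$; these differ by the factor $a_{n-1}^{2}$ and hence do not agree as $p$-forms --- only the symbols they generate mod $p$ agree, which is what \eqref{eq:moves} and \eqref{eq:pmoves} actually assert. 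So the correct course is to keep your $r=1$ computation (which coincides with the paper's proof) and drop the induction to $r=p$, or else reformulate it purely at the level of symbols rather than of forms.
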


\begin{proof}
By Lemma \ref{lem:agreegeneric}, the form agrees with the form
of \eqref{eq:Psi}.
\end{proof}

\begin{defn}\label{Rtower}
The tower \eqref{eq:Rtower} of varieties $S_i$ is obtained by
downward induction, starting with $S_n=\Spec(k)$ and $J_n=H_{n-1}$. 
Construction \ref{Ytower}
yields $S_{n-1}$, $J_{n-1}$ and $J'_{n-1}$. Inductively, we repeat 
construction \ref{Ytower} for $i$, starting with the output $S_{i+1}$ and 
$J_{i+1}$ of the previous step, to produce $S_{i}$, $J_{i}$ and $J'_{i}$.

By downward induction in the tower \eqref{eq:Rtower}, each $J_i$ and
$J'_i$ carries a $p$-form, which we call $\gamma_i$ and $\gamma'_i$,
respectively. By \ref{lem:agreePsi}, these forms agree with the forms
$\gamma_i$ and $\gamma'_i$ of \eqref{eq:moves} and \eqref{eq:pmoves}.
\end{defn}

Since $\dim(S_i/S_{i-1})=p^{i+1}-p^{i}$ we have $\dim(S_i/S_n)=p^n-p^i$.
Thus if we combine Lemma \ref{thm16base} and Theorem \ref{Theorem16},
we obtain the following result.

\begin{thm}\label{thm:part6}
For each $i<n$, $\deg(c_1(J_i)^{\dim\,S_i})\equiv-1\pmod{p}.$
\end{thm}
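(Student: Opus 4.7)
The plan is a downward induction on $i$, starting from $i = n-1$ (the level of the tower closest to $S_n = \Spec(k)$) and descending to $i = 1$, combining Lemma \ref{thm16base} as the base case with Theorem \ref{Theorem16} as the inductive step.

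For the base case $i = n-1$, the congruence is exactly Lemma \ref{thm16base}, since by Definition \ref{Rtower} we have $S_n = \Spec(k)$, so the hypothesis of that lemma is in force.

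For the inductive step, suppose the desired congruence is already established at level $i+1$ for some $1 \le i \le n-2$. The passage from $S_{i+1}$ to $S_i$ in Definition \ref{Rtower} is one iteration of the construction \ref{Ytower} with the parameter ``$n$'' of that construction replaced by $i+1$. To apply Theorem \ref{Theorem16} in this guise, I need to verify its two hypotheses: (a) $\dim S_{i+1}$ has the form $p^M - p^{i+1}$ for some $M$, and (b) the line bundles $H_1, \dots, H_{i}$ on $S_{i+1}$ are trivial. For (a), the relative-dimension formula $\dim(S_j/S_{j+1}) = p^{j+1} - p^j$ supplied by Definition \ref{Ytower} telescopes, starting from $\dim S_n = 0$, to give $\dim S_{i+1} = p^n - p^{i+1}$, so $M = n$ works. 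For (b), the $H_j$'s originate as trivial line bundles on $S_n = \Spec(k)$ (this is where the units $a_j$ enter as $p$-forms on trivial bundles) and are simply pulled back along the successive structure maps in the tower, so they stay trivial at every subsequent level. Theorem \ref{Theorem16} therefore yields
\[
\deg\bigl(c_1(J_i)^{\dim S_i}\bigr) \equiv \deg\bigl(c_1(J_{i+1})^{\dim S_{i+1}}\bigr) \pmod p,
\]
which, combined with the inductive hypothesis, gives the congruence at level $i$ and closes the induction.

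The one place that demands care is the bookkeeping of the index shift: each downward step in Definition \ref{Rtower} invokes the \ref{Ytower}-construction with a different value of its internal parameter ``$n$'', and one must confirm that both hypotheses (dimension shape and triviality of the $H_j$'s) are preserved under this reindexing. Apart from this, the argument is essentially a two-line combination of the already-proved base case and inductive step, with no new computation required.
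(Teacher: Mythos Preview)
Your approach is correct and is exactly what the paper does: the paper's entire argument is the one sentence preceding the theorem, ``Thus if we combine Lemma \ref{thm16base} and Theorem \ref{Theorem16}, we obtain the following result,'' and you have simply unpacked this into an explicit downward induction with the bookkeeping (dimension $p^n-p^{i+1}$ and triviality of the pulled-back $H_j$) spelled out. One cosmetic point: Lemma \ref{thm16base} gives $\equiv 1$ while the stated theorem reads $\equiv -1$; this is a sign inconsistency in the paper itself (only ``prime to $p$'' is needed for part (6) of the Chain Lemma), not a flaw in your argument.
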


Theorem \ref{thm:part6} establishes part (6) of the 
Chain Lemma \ref{thm:chainlemma}, that $\deg(c_1(J_1)^{\dim\, S_1})$.

\begin{proof}[Proof of the Chain Lemma \ref{thm:chainlemma}]
We verify the conditions for the variety $S=S_1$ in the tower 
\eqref{eq:Rtower}; the line bundles $J_i$ and $J'_i$ and their $p$-forms
are obtained by pulling back from the bundles and forms defined 
in \ref{Rtower}.
Part (1) of Theorem \ref{thm:chainlemma} is immediate from the 
construction of $S$; part (6) is Theorem \ref{thm:part6}, combined with
Lemma \ref{thm16base}. Part (2) was just established, and
part (4) was proven in Proposition \ref{part4};
parts (3) and (5) follow from (2) and (4).
This completes the proof of the Chain Lemma.
\end{proof}

\section{Nice $G$-actions}\label{sec:trucking}

We will extend the Chain Lemma to include an action by $G=\mu_p^n$
on $S$, $J_i$, $J'_i$ leaving $\gamma_i$ and $\gamma'_i$ invariant,
such that the action is admissible in the following sense.

\begin{defn}\label{def:Gnice} (Rost, cf.\ \cite[p.2]{Rost-CSV})
Let $G$ be a group acting on a $k$-variety $X$. We say that the action is 
{\it nice} if $\Fix_G(X)$ is 0-dimensional, and consists of $k$-points.

When $G$ also acts on a line bundle $L$ over $X$, the action on the 
geometric bundle $L$ is {\it nice} exactly when $G$ acts nontrivially 
on $L|_x$ for every fixed point $x\in X$, and in this case $\Fix_G(L)$ 
is the zero-section over $\Fix_G(X)$.

Suppose that $G$ acts nicely on each of several line bundles $L_i$ over $X$.
We say that $G$ {\it acts nicely} on $\{ L_1,\dots,L_r\}$ if for each 
fixed point $x\in X$ the image of the canonical representation
$G\to\prod\Aut(L_i|_x)=\prod k(x)^\times$ is $\prod G_i$, 
with each $G_i$ nontrivial.
\end{defn}

\begin{subrem}\label{rem:Gproduct}
If $X_i\to S$ are equivariant maps and the $X_i$ are nice, then 
$G$ also acts nicely on $X_1\times_SX_2$. However, even if $G$ acts
nicely on line bundles $L_i$ it may not act nicely on
$L_1\boxtimes L_2$, because the representation over $(x_1,x_2)$
is the product representation $L_1|_{x_1}\otimes L_2|_{x_2}$.
\end{subrem}

\begin{ex}\label{ex:Gprojective}
Suppose that $G$ acts nicely on a line bundle $L$ over $X$. Then
the induced $G$-action on $\bP=\bP(\cO\oplus L)$ and its 
canonical line bundle $\bL$ is nice. Indeed, if $x\in X$ is a fixed point
then the fixed points of $\bP|_x$ consist of the two $k$-points 
$\{[\cO], [L]\}$, and if $L|_x$ is the representation $\rho$ then
$G$ acts on $\bL$ at these fixed points as $\rho$ and $\rho^{-1}$, 
respectively.

By \ref{rem:Gproduct}, $G$ also acts nicely on the products 
$P=\prod\bP(\cO\oplus L)$ and $Q=X\times_{S'}P$ of 
Definition \ref{def:Q-bundle}, but it does not act nicely on $\bLbox$.
\end{ex}

\begin{ex}\label{ex:GKummer}
The group $G$ also acts nicely on the Kummer algebra bundle $\cA=\cA(L)$ 
of \ref{Kummeralgebra}, and on its projective space $\bP(\cA)$. 
Indeed, an elementary calculation shows that $\Fix_G\bP(\cA)$
consists of the $p$ sections $[L^i]$, $0\le i<p$ over $\Fix_G(X)$. 
In each fiber, the (vertical) tangent space at each fixed point is the 
representation $\rho\oplus\cdots\oplus \rho^{p-1}$. If $G=\mu_p$,
this is the reduced regular representation.

Over any fixed point $x\in X$, $L|_x$ is trivial, and the symmetric 
group $\Sigma_p$ acts on the bundle $\cA|_x$, permuting the fixed points.
This induces isomorphisms between the tangent spaces at these points.
\end{ex}

\begin{subex}\label{ex:GB}
The action of $G$ on $Y=\bP(\cO\oplus\cA)$ is not nice. In this case, an
elementary calculation shows that $\Fix_G(Y)$ consists of the points
$[L^i]$ of $\bP(\cA)$, $0<i<p$, together with the projective line 
$\bP(\cO\oplus\cO)$ over every fixed point $x$ of $X$. 
For each $x$, the (vertical) tangent space at $[L^i]$ is 
$1\oplus\rho\oplus\cdots\oplus \rho^{p-1}$; 
if $G=\mu_p$, this is the regular representation.
\end{subex}

When $G=\mu_p^n$, the following lemma allows us to assume that
the action on $L|_x$ is induced by the standard representation 
$\mu_p\subset k^\times$, via a projection $G\to\mu_p$. 
\hspace{-3pt}

\begin{lem}\label{lem:Glines}
Any nontrivial 1-dimensional representation $\rho$ of $G=\mu_p^n$
factors as the composition of a projection $G\to\mu_p$ with the
standard representation of $\mu_p$.
\end{lem}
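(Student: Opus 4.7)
My plan is to identify the 1-dimensional representation $\rho$ with a character of $G$ and then exploit that $\mu_p$ has prime order. First I would observe that a 1-dimensional representation of $G$ over $k$ is simply a group homomorphism $\rho:G\to k^\times$. Since $G=\mu_p^n$ has exponent $p$, the image of $\rho$ is annihilated by $p$, and hence lies in the subgroup $\mu_p\subset k^\times$ of $p$th roots of unity (which exists inside $k$ by our standing hypothesis on $k$). So $\rho$ factors as a homomorphism $\pi:G\to\mu_p$ followed by the standard inclusion $\mu_p\hookrightarrow k^\times$.

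It remains to check that $\pi$ is a projection, i.e., a surjection. Since $\rho$ is nontrivial, so is $\pi$. But $\mu_p$ has prime order $p$, so it has no proper nontrivial subgroups; therefore the image of $\pi$ is all of $\mu_p$, and $\pi$ is surjective. This gives the desired factorization.

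There is no real obstacle here; the statement is essentially the combination of two elementary facts (exponent of $G$ forces the image into $\mu_p$, and $\mu_p$ simple forces surjectivity). The only point to be careful about is the interpretation of \emph{projection}: here it means a surjective homomorphism onto $\mu_p$, which is all that is used in the subsequent application (extending the Chain Lemma to include a nice $G$-action).
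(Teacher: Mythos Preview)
Your proof is correct and amounts to the same argument as the paper's: the paper phrases it via Pontryagin duality (identifying $\rho$ with a nonzero element of $G^*\cong(\Z/p)^n$ and taking $\pi$ to be the dual of $\Z/p\to G^*$, $1\mapsto\rho$), but unwinding that duality gives exactly your direct observation that $\rho$ lands in $\mu_p\subset k^\times$ and is surjective because $\mu_p$ is simple.
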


\begin{proof}
The representation $\rho$ is a nonzero element of 
$(\Z/p)^n=G^*=\Hom(\mu_p^n,\mathbb{G}_m)$, and $\pi$ is the 
Pontryagin dual of the induced map $\Z/p\to G^*$ 
sending 1 to $\rho$.
\end{proof}

The construction of the $P_r$ and $Q_r$ in \ref{PQtower} is natural in
the given line bundles $H_1,\dots,H_n$ over $S'$, and so is the 
construction of the $Y_r$, $S_r$ and $S$ in \ref{Ytower} and \ref{Rtower}.
Since $\prod_{i=1}^n\Aut(H_i)$ acts on the $H_i$, this group 
(and any subgroup) will act on the variety $S$ of the Chain Lemma.  
We will show that it acts nicely on $S$.

Recall from Definition \ref{PQtower} that $P_r$ and $Q_r$ are defined by
the construction \ref{def:Q-bundle} using the line bundle
$L_r=H_r\oo K_{r-1}$ over $Q_{r-1}$.

\begin{lem}\label{lem:nicePQ}
If $S'=\Spec(k)$, then $G=\mu_p^r$ acts nicely on $L_r$, $P_r$ and $Q_r$.
\end{lem}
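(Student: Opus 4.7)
The plan is to induct on $r$. In the base case $r = 1$, the line bundle $L_1 = H_1 \otimes K_0 = H_1$ on $S' = \Spec(k)$ is acted on by $\mu_p$ via the standard representation, which is nontrivial on the single fiber, so $G = \mu_p$ acts nicely on $L_1$. Then Example \ref{ex:Gprojective} gives a nice action on $\bP(\cO \oplus L_1)$, and Remark \ref{rem:Gproduct} applied to the $(p-1)$-fold product over $S'$ yields nice actions on $P_1$ and $Q_1 = P_1$.

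For the inductive step from $r-1$ to $r$, the key observation is that $Q_{r-1}$ and $K_{r-1}$ are constructed from $S'$ and $H_1, \dots, H_{r-1}$ alone. Hence the $r$-th factor of $G = \mu_p^r$ acts trivially on both, and the $G$-action on $Q_{r-1}$ factors through the projection $\mu_p^r \to \mu_p^{r-1}$. By the inductive hypothesis applied to $\mu_p^{r-1}$, the fixed set $\Fix_G(Q_{r-1}) = \Fix_{\mu_p^{r-1}}(Q_{r-1})$ is a finite collection of $k$-points.

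To verify niceness on $L_r = H_r \otimes K_{r-1}$, I would examine the canonical representation of $G$ on the one-dimensional fiber $L_r|_x = H_r|_x \otimes K_{r-1}|_x$ at each fixed point $x \in Q_{r-1}$. The $r$-th $\mu_p$-factor acts by the standard representation on $H_r|_x$ and trivially on $K_{r-1}|_x$, so the total action on $L_r|_x$ is nontrivial; thus the action of $G$ on $L_r$ is nice. Now Example \ref{ex:Gprojective} upgrades this to a nice action on each factor $\bP(\cO \oplus L_r)$ and its tautological line bundle; Remark \ref{rem:Gproduct} applied to the $(p-1)$-fold fiber product over $S'$ yields a nice action on $P_r$, and a final application to $Q_r = Q_{r-1} \times_{S'} P_r$ closes the induction.

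I do not anticipate a serious obstacle; the proof is essentially bookkeeping. The one point that requires care is to distinguish niceness on the individual line bundle $L_r$ (which we assert) from niceness on the exterior product $K_r = K_{r-1} \boxtimes \bL^\boxtimes$: Remark \ref{rem:Gproduct} explicitly warns that niceness may fail to pass to such an exterior tensor product, and this is precisely why the lemma restricts its assertion to $L_r$, $P_r$, and $Q_r$ and is silent about $K_r$.
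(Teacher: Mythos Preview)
Your proof is correct and follows essentially the same approach as the paper's: induction on $r$, with the key observation that the last $\mu_p$-factor of $G$ acts trivially on $Q_{r-1}$ and $K_{r-1}$ but nontrivially on $H_r$, so that $G$ acts nicely on $L_r$ and hence (via Example~\ref{ex:Gprojective} and Remark~\ref{rem:Gproduct}) on $P_r$ and $Q_r$. Your version is slightly more explicit in spelling out why $\Fix_G(Q_{r-1}) = \Fix_{\mu_p^{r-1}}(Q_{r-1})$, and your closing remark about $K_r$ is a correct and useful observation that the paper leaves implicit.
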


This implies that any subgroup of $\prod_{i=1}^r\Aut(H_i)$ 
containing $\mu_p^r$ also acts nicely.

\begin{proof}
We proceed by induction on $r$, the case $r=1$ being \ref{ex:Gprojective},
so we may assume that $\mu_p^{r-1}$ acts nicely on $Q_{r-1}$.
By \ref{rem:Gproduct}, it suffices to show that $G=\mu_p^r$ acts nicely on
$\bP(\cO\oplus L_r)$, where $L_r=H_r\oo K_{r-1}$. 
Since the final component $\mu_p$ of $G$ acts trivially on $K_{r-1}$
and $Q_{r-1}$ and nontrivially on $H_r$, $G=\mu_p^{r-1}\times\mu_p$ 
acts nicely on $L_r$. By Example \ref{ex:Gprojective}, $G$ acts nicely
on $\bP(\cO\oplus L_r)$. 
\end{proof}
\goodbreak

The proof of Lemma \ref{lem:nicePQ} goes through in slightly greater
generality.

\begin{cor}\label{cor:nicePQ}
Suppose that $G=\mu_p^n$ acts nicely on $S'$ and on the
line bundles $\{ H_1,\dots,H_r\}$ over it.
Then $G$ acts nicely on $L_r$, $P_r$ and $Q_r$.
\end{cor}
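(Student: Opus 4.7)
The plan is to imitate the proof of Lemma \ref{lem:nicePQ} by induction on $r$, with a strengthened inductive hypothesis that tracks the characters of the $G$-action on $K_{r-1}$ at fixed points. The base case $r=0$ is the hypothesis of the corollary ($Q_0 = S'$, $K_0 = \cO_{S'}$). For the inductive step, I propose to maintain the following invariant along with the conclusions already in the statement: \emph{at each fixed point $q\in Q_{r-1}$ lying over $x\in S'$, the character by which $G$ acts on $K_{r-1}|_q$ lies in the subgroup of $\Hom(G,\mathbb{G}_m)$ generated by the characters on $H_1|_x,\dotsc,H_{r-1}|_x$.}

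Assuming this invariant, niceness on $L_r$ follows quickly. At a fixed point $q$ of $Q_{r-1}$ over $x$, the hypothesis that $G$ acts nicely on the family $\{H_1,\dotsc,H_r\}$ guarantees that the image of $G \to \prod_{i=1}^r\Aut(H_i|_x)$ is a product $\prod G_i$ with each $G_i$ nontrivial. Taking $G_r$ to be the final factor, this subgroup acts trivially on $H_i|_x$ for $i<r$, hence trivially on $K_{r-1}|_q$ by the invariant, but nontrivially on $H_r|_x$; therefore it acts nontrivially on $L_r|_q = H_r|_x \otimes K_{r-1}|_q$. This is exactly niceness of the geometric bundle $L_r$ over $Q_{r-1}$. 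Example \ref{ex:Gprojective} then gives niceness on $\bP(\cO\oplus L_r)$ and its tautological bundle $\bL$, and Remark \ref{rem:Gproduct} propagates niceness to the fiber products $P_r=\prod\bP(\cO\oplus L_r)$ over $Q_{r-1}$ and $Q_r=Q_{r-1}\times_{S'}P_r$.

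To close the induction I must verify the invariant at level $r$, i.e., for $K_r = K_{r-1}\boxtimes\bL_r^\boxtimes$ at fixed points of $Q_r$. A fixed point of $Q_r$ lies over a fixed point $q$ of $Q_{r-1}$, and its projection to each factor $\bP(\cO\oplus L_r)$ is by Example \ref{ex:Gprojective} either $[\cO]$ or $[L_r]$, so the fiber of $\bL_r^\boxtimes$ at the fixed point is $L_r^{\,j}|_q = H_r^{\,j}|_x \otimes K_{r-1}^{\,j}|_q$ for some integer $0\le j<p$. Combined with the inductive invariant on $K_{r-1}|_q$, the resulting character on $K_r$ is a monomial in $\chi_{H_1|_x},\dotsc,\chi_{H_r|_x}$, which is the invariant at level $r$.

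The main obstacle is precisely this character bookkeeping: the plain inductive statement ``$G$ acts nicely on $K_{r-1}$, $P_{r-1}$, $Q_{r-1}$'' is too weak, since establishing nontriviality of $G$ on $L_r|_q = H_r|_x \otimes K_{r-1}|_q$ requires ruling out destructive cancellation between the two characters. The niceness of the whole family $\{H_1,\dotsc,H_r\}$ (which is genuinely stronger than niceness of each $H_i$ individually, as it encodes independence of the characters) is what supplies the subgroup $G_r$ needed to preclude cancellation, and the strengthened invariant is the clean way to propagate this through the inductive construction.
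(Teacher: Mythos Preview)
Your induction with the character invariant is correct and is exactly the right bookkeeping to rule out cancellation in $L_r = H_r\otimes K_{r-1}$. One small wrinkle: since $P_r$ is a fiber product of $p-1$ copies of $\bP(\cO\oplus L_r)$ over $S'$ rather than over $Q_{r-1}$, the coordinates of a fixed point of $P_r$ may lie over \emph{distinct} fixed points $q_1,\dotsc,q_{p-1}$ of $Q_{r-1}$, so the character on $\bL_r^\boxtimes$ is a product of powers of the various $L_r|_{q_i}$ rather than a single $L_r^{\,j}|_q$; this is harmless for your invariant, since each such character already lies in the subgroup generated by $\chi_{H_1|_x},\dotsc,\chi_{H_r|_x}$, which depends only on $x$. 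The paper's route is much shorter: niceness can be checked over each fixed point $s\in S'$, so one may take $S'=\Spec k$; the nice action on $\{H_1,\dotsc,H_r\}$ then forces $G=\mu_p^n\to\prod\Aut(H_i)$ to surject onto $\mu_p^r$, the kernel acts trivially on the whole tower, and after a change of coordinates on $\mu_p^r$ one is literally in the setting of Lemma~\ref{lem:nicePQ}. Your approach trades this structural reduction for explicit character tracking; it is longer, but it makes transparent why the product-image condition in the definition of a nice action on a family is exactly what is needed, and it would go through for groups other than $\mu_p^n$.
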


\begin{proof}
Without loss of generality, we may replace $S'$ by a fixed point $s\in S'$,
in which case $G$ acts nicely on $\{ H_1,\dots,H_r\}$ through the surjection 
$\mu_p^n\to\mu_p^{r}$. Now we are in the situation of Lemma \ref{lem:nicePQ}.
\end{proof}

\begin{subex}\label{ex:niceY}
Since $\mu_p^{n-1}$ acts nicely on $Y=P_{n-1}(S';H_1,\dots,H_{n-1})$ and 
on the bundle $K_{n-1}$, while $\mu_p$ of $G=\mu_p^n$ acts solely on $H_n$,
it follows that the group $\mu_p^n=\mu_p^{n-1}\times\mu_p$ acts nicely on
$\{ H_1,\dots,H_{n-1},H_n\oo\bLbox \}$ over $Y$.
\end{subex}

We can now process the tower of varieties $Y_r$ defined in \ref{Ytower}.
For notational convenience, we write $H_{n-1}$ for $J_n$. 
The case $r=0$ of the following assertion uses the convention that
$L_0=H_{n-1}$ and $L_{-1}=H_n$.

\begin{prop}\label{Gtwisting}
Suppose that $G=G_0\times\mu_p^n$ acts nicely on $S_n$ and 
(via $G\to\mu_p^n$) on $\{ H_1,\dots, H_n\}$.
Then $G$ acts nicely on each $Y_r$, and on its line bundles
$\{ H_1,\dots,H_{n-2}, L_r, L_{r-1}\}$.
\end{prop}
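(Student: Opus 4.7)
The plan is to proceed by induction on $r$, the case $r=0$ being immediate: $Y_0=S_n$, $L_0=J_n=H_{n-1}$ and $L_{-1}=H_n$, so the assertion reduces to the hypothesis that $G$ acts nicely on $S_n$ and on $\{H_1,\dots,H_n\}$.

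For the inductive step, assume $G$ acts nicely on $Y_{r-1}$ and on the collection $\{H_1,\dots,H_{n-2},L_{r-1},L_{r-2}\}$. By Definition \ref{Ytower} we have $Y_r=P_{n-1}(Y_{r-1};H_1,\dots,H_{n-2},L_{r-1})$, and in particular $G$ acts nicely on the sub-collection $\{H_1,\dots,H_{n-2},L_{r-1}\}$ over $Y_{r-1}$. I would then apply Corollary \ref{cor:nicePQ} to this data, with $S'=Y_{r-1}$: it immediately yields that $G$ acts nicely on the variety $Y_r$, and in particular on the tautological line bundle $\bL_r^\boxtimes=\bLbox$ on $Y_r$. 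Niceness on the pulled-back bundles $H_1,\dots,H_{n-2}$ and $L_{r-1}$ is preserved because fixed points of $Y_r$ lie over fixed points of $Y_{r-1}$, and the fiber of a pulled-back bundle at a fixed point is the same representation.

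The remaining task is to establish niceness on the new bundle $L_r=f_r^*f_{r-1}^*(L_{r-2})\otimes\bL_r^\boxtimes$ and, more importantly, on the joint collection $\{H_1,\dots,H_{n-2},L_r,L_{r-1}\}$. Fix a fixed point $y\in Y_r$ mapping to fixed points of $Y_{r-1}$ and $Y_{r-2}$. By Lemma \ref{lem:Glines}, each one-dimensional $G$-representation factors through a projection to $\mu_p$; I would identify the character on $L_r|_y$ as the product of two characters, one coming from $L_{r-2}$ at the two-step image and one from $\bL_r^\boxtimes|_y$. The character on $\bL_r^\boxtimes|_y$ is controlled by the $P_{n-1}$-construction (via the analysis of Example \ref{ex:Gprojective} and Example \ref{ex:niceY}), and hence only involves the $\mu_p$-factors of $G$ attached to $H_1,\dots,H_{n-2},L_{r-1}$; the $L_{r-2}$ contribution supplies an independent component, giving nontriviality of $G$ on $L_r|_y$.

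The main obstacle is verifying that the joint image $G\to\prod\Aut$ on $\{H_1,\dots,H_{n-2},L_r,L_{r-1}\}$ is a product of $n$ nontrivial subgroups of $\mu_p$. The $n-2$ characters of the $H_i$ are independent by hypothesis and occupy the corresponding $\mu_p$-factors of $\mu_p^n$, so everything reduces to showing that the two characters on $L_{r-1}$ and $L_r$ span the remaining two $\mu_p$-factors nondegenerately. By the inductive hypothesis the pair $(L_{r-1},L_{r-2})$ already does so; since $L_r=f^*L_{r-2}\otimes\bL_r^\boxtimes$, the character of $L_r$ differs from that of $L_{r-2}$ only by the contribution of $\bL_r^\boxtimes$, whose character lies in the span of the $H_1,\dots,H_{n-2},L_{r-1}$ characters (already accounted for in the other coordinates). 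Writing this contribution as a linear combination of the $H_i$-characters and the $L_{r-1}$-character, the $2\times 2$ change-of-basis matrix from $(L_{r-2},L_{r-1})$ to $(L_r,L_{r-1})$ is unipotent modulo the pulled-back characters, so its determinant is $1\bmod p$. This is the analogue of the bookkeeping carried out explicitly in Example \ref{ex:niceY}, and it closes the induction.
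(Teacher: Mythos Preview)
Your approach is correct and essentially matches the paper's proof: both proceed by induction on $r$, invoke Corollary \ref{cor:nicePQ} with $S'=Y_{r-1}$ to obtain niceness on $Y_r$, and then argue that passing from the pair $(L_{r-1},L_{r-2})$ to $(L_r,L_{r-1})$ is a shear on characters, since $\chi_{L_r}=\chi_{L_{r-2}}+\chi_{\bL_r^\boxtimes}$ and $\chi_{\bL_r^\boxtimes}$ lies in the span of $\chi_{H_1},\dots,\chi_{H_{n-2}},\chi_{L_{r-1}}$.

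Two small remarks. First, your sentence ``it immediately yields that $G$ acts nicely \dots\ in particular on the tautological line bundle $\bL_r^\boxtimes$'' is false as stated: Corollary \ref{cor:nicePQ} gives niceness on the variety $P_{n-1}$ and on the bundle $L=H_{n-1}\otimes K_{n-2}$, not on $\bLbox$, and Example \ref{ex:Gprojective} explicitly notes that $G$ need \emph{not} act nicely on $\bLbox$. Fortunately you never use niceness of $\bL_r^\boxtimes$ --- only that its character at each fixed point is a $\Z$-combination of the $\chi_{H_i}$ and $\chi_{L_{r-1}}$, which follows from the fixed-point analysis in Example \ref{ex:Gprojective} iterated through the tower --- so this slip is not load-bearing. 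Second, the paper phrases the shear argument more concretely: from the inductive hypothesis it extracts the $\mu_p$-factor of $G$ that acts nontrivially on $L_{r-1}$ and trivially on $\{H_1,\dots,H_{n-2},L_r\}$; this factor then acts trivially on $Y_{r+1}$ and on $\bL_{r+1}^\boxtimes$ (both built only from the latter bundles) but nontrivially on $L_{r+1}=L_{r-1}\otimes\bL_{r+1}^\boxtimes$. This is exactly your unipotent change-of-basis, just stated coordinate-by-coordinate rather than as a determinant computation.
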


\begin{proof}
The question being local, we may replace $S'$ by a fixed point $s\in S'$,
and $G$ by $\mu_p^n$. We proceed by induction on $r$, 
the case $r=1$ being Example \ref{ex:niceY}, since $L_1=H_n\oo\bLbox$.
Inductively, suppose that $G$ acts nicely on $Y_r$ and on
$\{ H_1,\dots,H_{n-2},L_r,L_{r-1}\}$. Thus there is a factor of $G$ 
isomorphic to $\mu_p$ which acts nontrivially on $L_r$ but acts trivially on
$\{ H_1,\dots,H_{n-2},L_r\}$. Hence this factor acts trivially on
$Y_{r+1} = P_{n-1}(Y_r;H_1,\dots,H_{n-2},L_r)$ and its line bundle
$\bL^\boxtimes$, and nontrivially on $L_{r+1} = L_{r-1}\oo\bL^\boxtimes$. 
The assertion follows.  
\end{proof}

\begin{cor}\label{GonSJ}
$G=\mu_p^n$ acts nicely on $(S,J)$. 
\end{cor}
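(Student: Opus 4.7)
The plan is to apply Proposition \ref{Gtwisting} by downward induction along the tower \eqref{eq:Rtower}, starting with the base $S_n = \Spec(k)$ and descending to $S = S_1$. As base case, I will equip each trivial line bundle $H_i$ on $\Spec(k)$ with the $G = \mu_p^n$-action in which the $i$-th factor $\mu_p \subseteq G$ acts on $H_i$ through the standard character (Lemma \ref{lem:Glines}), while the remaining factors act trivially. At the unique $k$-point, the resulting map $G \to \prod_{i=1}^n \Aut(H_i|_*) = (k^\times)^n$ is the identity on $\mu_p^n$, so $G$ acts nicely on the list $\{H_1, \ldots, H_n\}$ in the sense of Definition \ref{def:Gnice}.

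For the inductive step, I would assume that $G$ acts nicely on $S_{i+1}$ and on the list of line bundles serving as input for construction \ref{Ytower} applied with parameter $i+1$ to produce $S_i$. Proposition \ref{Gtwisting} then delivers a nice $G$-action on $S_i$ together with a nice action on $\{H_1, \ldots, H_{i-1}, J_i, J'_i\}$. To set up the next iteration, I would combine this with the pullback of $H_i$ (carrying its $\mu_p$-action via the $i$-th factor of $G$, which was dormant at all previous levels of the tower); this produces the input data needed for the next application of \ref{Gtwisting}.

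Iterating down to $i = 1$, the final application of Proposition \ref{Gtwisting} produces a nice $G$-action on $S = S_1$ together with a nice action on $\{J_1, J'_1\}$; in particular $G$ acts nicely on $(S, J)$.

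The only real obstacle is the bookkeeping required to ensure that at every stage the $\mu_p$-factors of $G$ acting nontrivially on the relevant line bundles are mutually independent, so that the image of $G$ in $\prod \Aut(H_i|_x)$ really is a product of nontrivial subgroups. This follows from the observation that each line bundle $H_j$ enters the tower construction \ref{Rtower} at only one level, so its designated $\mu_p$-factor remains inactive at all other levels and the independence condition of Definition \ref{def:Gnice} is preserved at each step of the induction.
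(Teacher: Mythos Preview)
Your overall strategy---downward induction along the tower \eqref{eq:Rtower} via Proposition \ref{Gtwisting}, starting from the action of $\mu_p^n$ on $\{H_1,\dots,H_n\}$ over $\Spec(k)$---is exactly the paper's. The paper states the inductive claim as ``$G=\mu_p^{n-i}\times\mu_p^i$ acts nicely on $S_i$ and $\{H_1,\dots,H_{i-1},J_i,J'_i\}$'' and reads off $i=1$.

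Your bookkeeping for the inductive step is off, however. The sentence ``combine this with the pullback of $H_i$'' is both unnecessary and incorrect. In the construction \ref{Rtower}, the input to the passage $S_i\to S_{i-1}$ (i.e.\ \ref{Ytower} with parameter $i$) is the $i$-element list $\{H_1,\dots,H_{i-1},J_i\}$: here $H_{i-1}$ takes the ``$J$''-role (form $a_{i-1}$) and $J_i$ takes the ``$H_i$''-role (form $\gamma_i$). This is a \emph{sublist} of the output $\{H_1,\dots,H_{i-1},J_i,J'_i\}$ of the previous step---one drops $J'_i$, one does not add the original $H_i$. Since the image of $G$ in a smaller product of automorphism groups is still a product of nontrivial subgroups, the hypothesis of \ref{Gtwisting} (with $G_0=\mu_p^{n-i}$) is inherited automatically.

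Your final paragraph is also not right: the original bundles $H_1,\dots,H_{i-1}$ are used at \emph{every} level from $S_n$ down to $S_i$, not just one, and the $i$-th $\mu_p$-factor is certainly not dormant at earlier levels (it acts on $H_i$, which sits in the input list at every level $m\ge i+1$). Fortunately this reasoning is not needed: the ``independence'' you are worried about is precisely what Definition \ref{def:Gnice} calls acting nicely on the list, and that is the \emph{conclusion} of \ref{Gtwisting}, so no separate verification is required.
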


\begin{proof}
By Definition \ref{Ytower}, $S_{n-1}=Y_p$, $J_{n-1}=L_p$ 
and $J'_{n-1}=L_{p-1}$.
By \ref{Gtwisting} with $r=p$, 
$G$ acts nicely on $S_{n-1}$ and on
$\{ H_1,\dots,H_{n-2}, J_{n-1}, J'_{n-1}\}$. By downward induction,
$G=\mu_p^{n-i}\times\mu_p^i$ acts nicely on $S_i$ and 
$\{ H_1,\dots,H_{i-1},J_i,J'_i \}$ for all $i\le n$. The case $i=1$ is the
conclusion, since $(S,J)=(S_1,J_1)$.
\end{proof}

\begin{subrem}
If $G=\mu_p^n$ acts nicely on $S'$, Rost \cite[p.2]{Rost-CSV} would say that 
a fixed point $s\in S'$ is {\it twisting} for $\{ H_1,\dots,H_r\}$ 
if the map
$G\to \mu_p^r\subset\prod k(s)^\times=\prod\Aut(H_i|_s)$ is a surjection.
\end{subrem}

\section{$G$-fixed point equivalences}

Let $\cA=\cA(J)$ be the Kummer algebra over the variety $S$ of the Chain Lemma
\ref{thm:chainlemma}, as in \ref{Kummeralgebra}. 
The group $G=\mu_p^n$ acts nicely on $S$ and $J$ by \ref{GonSJ}, and
on $\cA$ and $\bP(\cA)$ by \ref{ex:GKummer}. In this section, we introduce
two $G$-varieties $\barT$ and $Q$, parametrized by norm conditions,
and show that they are $G$-fixed point equivalent to $\bP(\cA)$ and
$\bP(\cA)^p$, respectively. This will be used in the next section
to show that $\barT$ is $G$-fixed point equivalent to the Weil
restriction of $Q_E$ for any Kummer extension $E$ of $k$.

We begin by defining fixed point equivalence and the variety $Q$.

\begin{defn}\label{def:fpe}
Let $G$ be an algebraic group. We say that two $G$-varieties $X$ and $Y$
are {\it $G$-fixed point equivalent} if $\Fix_GX$ and $\Fix_GY$ are
0-dimensional, lie in the smooth locus of $X$ and $Y$, and there is a
separable extension $K$ of $k$ and a bijection $\Fix_G(X_K)\to\Fix_G(Y_K)$
under which the families of tangent spaces at the fixed points are 
isomorphic as $G$-representations over $K$. 
\end{defn}

\begin{defn}\label{def:Q}
Recall from \ref{Kummeralgebra} that the norm $\cA\map{N}\cO_S$ is
equivariant, and homogeneous of degree $p$.
We define the $G$-variety $Q$ over $S\times\A^1$, and its fiber
$Q_w$ over $w\in k$,
by the equation $N(\beta)=w$:
\begin{align*}
Q =&  ~\{ [\beta,t]\in \bP(\cA\oplus\cO)\times\A^1: N(\beta)=t^pw \}, \\
Q_w =& ~\{ [\beta,t]\in \bP(\cA\oplus\cO): N(\beta)=t^pw \},
\quad \text{ for } w\in k.
\end{align*}
\end{defn}

Since $\dim(S)=p^n-p$ we have $\dim(Q_w)=p^n-1$. If $w\neq 0$, then it is proved in \cite[\S 2]{SJ} that $Q_w$ is geometrically irreducible and that the open subscheme where $t\neq 0$ is smooth. 

If $w\ne0$, $Q_w$ is disjoint from the section $\sigma:S\cong\bP(\cO)\to\bP(\cA\oplus\cO)$; 
over each point of $S$, the point $(0:1)$ is disjoint from $Q_w$.  
Hence the projection $\bP(\cA\oplus\cO)-\sigma(S)\to\bP(\cA)$ 
from these points induces an equivariant morphism $\pi: Q_w\to Y=\bP(\cA)$, 
$\pi(\beta,t)=\beta$. This is a cover of degree $p$ over its image, since $\pi(\beta,t)=\pi(\beta,\zeta t)$
for all $\zeta\in\mu_p$.

\begin{thm}\label{thm:Xb}
If $w\ne0$, $G$ acts nicely on $Q_w$ and
$\Fix_G Q_w \cap (Q_w)_{\textrm{sing}} = \emptyset$. Moreover,
$Q_w$ and $Y=\bP(\cA)$ are $G$-fixed point equivalent over the field
$\ell = k(\root{p}\of b)$.
\end{thm}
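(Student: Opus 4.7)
The plan is to compute the $G$-fixed locus of $Q_w$ fibrewise over each $G$-fixed point $s\in S$, verify smoothness of $Q_w$ at those points, and then match the resulting $G$-representations on tangent spaces with those of $\bP(\cA)$. By Lemma \ref{lem:Glines}, $G=\mu_p^n$ acts on $J|_s$ through a nontrivial character $\chi$, so $\cA|_s=\bigoplus_{i=0}^{p-1}J^i|_s$ splits into the distinct characters $1,\chi,\dots,\chi^{p-1}$. Adjoining the trivial summand $\cO_s$, the trivial isotypic component of $(\cA\oplus\cO)|_s$ is two-dimensional and all the other isotypic components are one-dimensional; therefore $\Fix_G\bP((\cA\oplus\cO)|_s)$ consists of a projective line $\bP^1$ (from the trivial component) together with the $p-1$ isolated points $[J^i]|_s$, $1\le i<p$.

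I would then intersect each piece with $N(\beta)=t^p w$. On the trivial $\bP^1$, parametrized as $[c\cdot 1 : t]$ with $\beta\in J^0$, the equation reduces to $c^p=t^p w$; over $\ell$ this has exactly $p$ distinct rational solutions $q_j^s=[\zeta^j\sqrt[p]{w} : 1]$, $0\le j<p$. At $[J^i]|_s$ with $i\ne 0$ the equation forces $t=0$ and $N(u^i)=a^i=0$, where $a=\gamma|_s(u)$; this is impossible provided $\gamma|_s\ne 0$ at each $G$-fixed $s$, which must be extracted from the tower construction of \S\ref{sec:ModelforMoves}. Hence $\Fix_G(Q_w)$ over $s$ consists of the $p$ points $q_j^s$, defined over $\ell$.

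The tangent representations at $q_j^s$ are obtained as follows. The vertical tangent to $\bP((\cA\oplus\cO)|_s)$ at $q_j^s=[v]$ is $\Hom(\langle v\rangle,(\cA\oplus\cO)|_s/\langle v\rangle)$; since $v$ is in the trivial isotypic component, this equals the full regular representation $1\oplus\chi\oplus\chi^2\oplus\cdots\oplus\chi^{p-1}$ of $\mu_p$ via $\chi$. Expanding $N=\prod_{k=0}^{p-1}\sigma_k(\beta)$ over the Galois conjugates $\sigma_k(\beta)=\sum_i x_i\zeta^{ik}\alpha^i$ with $\alpha^p=a$, and evaluating at $\beta=c\cdot 1$, gives $\partial N/\partial x_0=p c^{p-1}\ne 0$ and $\partial N/\partial x_i=c^{p-1}\alpha^i\sum_k\zeta^{ik}=0$ for $1\le i<p$ by the character-sum identity, while $\partial(t^p w)/\partial t=p t^{p-1}w\ne 0$. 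Thus $d(N-t^p w)$ is nonzero and lies entirely in the trivial isotypic component, so $Q_w$ is smooth at $q_j^s$ and the vertical tangent to $Q_w$ is cut down to the reduced regular representation $\chi\oplus\chi^2\oplus\cdots\oplus\chi^{p-1}$. For $\bP(\cA)$, the vertical tangent at each $[J^i]|_s$ is $\bigoplus_{j\ne i}\chi^{j-i}=\chi\oplus\chi^2\oplus\cdots\oplus\chi^{p-1}$ as well. Adding the common horizontal factor $T_sS$ (both varieties project smoothly to $S$ at their fixed points), the tangent $G$-representations agree at every fixed point of both varieties. Since both fixed loci consist of $p$ points per $G$-fixed $s$ once base-changed to $\ell$, any bijection $q_j^s\mapsto [J^{\sigma(j)}]|_s$ over $\ell$ furnishes the desired $G$-fixed point equivalence; niceness of the action on $Q_w$ follows since $\Fix_G Q_w$ is finite and avoids the singular locus.

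The main obstacle is the Jacobian computation at the fixed points, which uses the character-sum identity $\sum_k\zeta^{ik}=0$ for $1\le i<p$: this identity alone is what forces $Q_w$ to be smooth exactly along its $G$-fixed locus and produces the matching reduced regular representation. A secondary verification, perhaps deserving its own lemma, is that $\gamma|_s\ne 0$ at every $G$-fixed $s\in S$; this is needed to rule out spurious fixed points at the $[J^i]|_s$ with $i\ne 0$ and should follow inductively from the construction of $\gamma$ in \S\ref{sec:ModelforMoves} as a product of forms that are nonvanishing at the fixed-point tower.
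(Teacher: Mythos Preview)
Your approach matches the paper's: reduce to the fiber over a $G$-fixed $s\in S$, use Lemma~\ref{lem:Glines} to pass to a single $\mu_p$-character, identify the fixed locus of $\bP(\cA\oplus\cO)|_s$ as a projective line together with the isolated points $[J^i]$ for $0<i<p$ (the paper cites Examples~\ref{ex:GKummer} and~\ref{ex:GB} rather than redoing the isotypic decomposition), intersect with the norm equation to get $p$ points over $\ell$, and compare tangent representations. Your explicit Jacobian computation via the character-sum identity supplies details that the paper simply asserts in one sentence.

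On the point you flag as needing a separate lemma, namely $\gamma|_s\ne0$ at each $G$-fixed $s$: this does not require an inductive unwinding of the tower in \S\ref{sec:ModelforMoves}. Since $G$ acts nicely on $S$, every such $s$ is $k$-rational, and part~(4) of the Chain Lemma~\ref{thm:chainlemma} says that $\gamma|_s=0$ forces $\syma=0$ in $K^M_n(k(s))/p=K^M_n(k)/p$, contradicting the standing hypothesis that the symbol is nontrivial. The paper leaves this implicit (it just asserts the $e_i$ ``do not lie on $Q_w$''), so you were right to isolate the step, but the justification is one line rather than a new induction.
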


\begin{proof} 
Since the maps $Q_w\map{\pi}Y\to S$ are equivariant, $\pi$ maps 
$\Fix_GQ_w$ to 
$\Fix_GY$, and both lie over the finite set $\Fix_GS$ of $k$-rational points.
Since the tangent space $T_y$ is the product of $T_sS$ and the tangent
space of the fiber $Y_s$, and similarly for $Q_w$, it suffices to
consider a $G$-fixed point $s\in S$.

By \ref{Gtwisting} and Lemma \ref{lem:Glines}, $G$ acts nontrivially 
on $L=J|_s$ via a projection $G\to\mu_p$. By Example \ref{ex:GKummer},
$G$ acts nicely on $\bP(\cA)$.
Thus there is no harm in assuming that $G=\mu_p$ and that $L$ is the
standard 1-dimensional representation.

Let $y\in Y$ be a $G$-fixed point lying over $s$. 
By \ref{ex:Gprojective}, the tangent space of $Y|_s$ at $y$ is the reduced
regular representation, and $y$ is one of $[1]$, $[L]$, \dots $[L^{p-1}]$.

We saw in Example \ref{ex:GB} that a fixed point 
$[\,a_0:a_1:\cdots:a_{p-1}:t\,]$
of $G$ in $\bP(\cA\oplus\cO)|_s$ is either one of the points
$e_i=[\,\cdots0:a_i:0\cdots:0\,]$, which do not lie on $Q_w$, 
or a point on the projective line $\{[\,a_0:0:t\,]\}$. 
By inspection, $Q_w\otimes_k\ell$ 
meets the projective line in the $\ell$-points
$[\,\zeta{\root{p}\of b}:0:\cdots:0:1\,]$, $\zeta\in\mu_p$.
Each of these $p$ points is smooth on $Q_w$, and the tangent space (over $s$)
is the reduced regular representation of $G$.
\end{proof}

\begin{subrem}
Since $\pi([\,\zeta{\root{p}\of b}:0:\cdots:0:1\,])=[1]$ for all $\zeta$,
$\Fix_G(Q_w)\map{\pi}\Fix_G(Y)$ is {\it not} a scheme isomorphism over $\ell$.
\end{subrem}

\begin{rem}\label{cor:NV}
For any $w\in k^\times$ of $N$, any desingularization $Q'$ 
of $Q_w$ is a smooth, geometrically irreducible splitting variety for the symbol 
$\{ a_1,...,a_{n},w \}$ in $K^M_{n+1}(k)/p$. 

Assuming the Bloch-Kato conjecture for $n$, 
Suslin and Joukhovitski show it is a norm variety
in \cite[\S2]{SJ}. Note that the variety $X_w$ of \ref{exist:NV} is 
birationally a cover of $Q_w$.
\end{rem}
\goodbreak

To construct $\barT$, we fix a Kummer extension $E=k(\epsilon)$ of $k$.
Let $\cB$ be the $\cO_S$-subbundle 
$(\cA\!\otimes\!1) \oplus (\cO_S\!\otimes\!\epsilon)$ of
$\cA_E=\cA\otimes_kE$ and let $N_{\cB}:\cB\to\cO_S\otimes_kE$ be 
the map induced by the norm on $\cA_E$. 

\begin{defn}
Let $U$ be the variety $\bP(\cA)\times\bP(\cB)^{\times (p-1)}$
over $S^{\times p}$, and let $L$ be the line bundle
$\bL(\cA)\boxtimes \bL(\cB)^{\boxtimes (p-1)}$ over $U$, given as 
the exterior product of the tautological bundles.
The product of the various norms defines an algebraic morphism 
$N:L \to \cO_S\!\otimes\!{E}$. 
\end{defn}

\begin{lem}\label{lem:nosplitalgebras}
Let $u\in U$ be a point over $(s_0,s_1,\dotsc,s_{p-1})$, and
write $A_i$ for the $k(s_i)$-algebra $\cA|_{s_i}$. 
Then the following hold. 
\begin{enumerate}
\item If $\syma$ doesn't split at any of the points $s_0,\dotsc,s_{p-1}$,
then the norm map $N: L_u \to k(u)\otimes E$ 
is non-zero.
\item If $\syma|_{s_0}\ne0$ in $K^M_n(k(s_0))/p$, then $A_0$ is a field.
\item For $i\geq 1$, if $\syma|_{E(s_i)}\ne0$ in $K^M_n(E(s_i))/p$ then  
$A_i\otimes E$ is a field.
\end{enumerate}
\end{lem}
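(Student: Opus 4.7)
The plan is to prove parts (2) and (3) first as parallel specialization arguments, then combine them to deduce (1).

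For (2), I would use Definition \ref{Kummeralgebra} to identify $A_0 = \cA|_{s_0}$ with $k(s_0)(\sqrt[p]{a})$, where $a = \gamma|_{s_0}(v)$ for a generator $v$ of $J|_{s_0}$; this algebra is a field exactly when $a \notin k(s_0)^{\times p}$. I argue the contrapositive: suppose $A_0$ is not a field, so $a \in k(s_0)^{\times p}$. Since $\syma|_{s_0} \ne 0$, condition (4) of the Chain Lemma \ref{thm:chainlemma} forces $s_0$ to avoid the vanishing loci $V(\gamma) \cup \bigcup_j V(\gamma_j')$, so all of $\gamma,\gamma_1',\dots,\gamma_{n-1}'$ are nonzero at $s_0$. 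Lemma \ref{lem:specialize} then specializes condition (2) of the Chain Lemma to
\[
\syma|_{s_0} = \{\gamma|_{s_0}, \gamma_1'|_{s_0},\dots,\gamma_{n-1}'|_{s_0}\} \quad \text{in } K^M_n(k(s_0))/p.
\]
Since $\gamma|_{s_0}$ evaluates to a $p$-th power, this symbol vanishes, contradicting $\syma|_{s_0}\ne 0$.

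For (3), I would run the same argument over the extended base $E$: the ring $A_i \otimes E$ is the Kummer algebra of $\gamma|_{s_i}$ over $k(s_i)\otimes_k E$, and the specialized Chain Lemma identity, now read in $K^M_n(E(s_i))/p$, forces the symbol to vanish whenever $A_i\otimes E$ splits, contradicting $\syma|_{E(s_i)}\ne 0$.

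For (1), I interpret the hypothesis as $\syma|_{s_0}\ne 0$ together with $\syma|_{E(s_i)}\ne 0$ for each $i\ge 1$, aligning with the conclusions of (2) and (3). Choosing a generating tensor $v_0\boxtimes v_1\boxtimes\cdots\boxtimes v_{p-1}$ of $L_u$, with $v_0 \in A_0$ nonzero and $v_i \in \cB|_{s_i} \subset A_i\otimes E$ nonzero, the norm factors as
\[
N(v_0\boxtimes\cdots\boxtimes v_{p-1}) = N_{A_0/k(s_0)}(v_0)\cdot\prod_{i=1}^{p-1} N_{A_i\otimes E/k(s_i)\otimes E}(v_i)
\]
in $k(u)\otimes E$. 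By (2), $A_0$ is a field, so $N_{A_0/k(s_0)}(v_0)\ne 0$; by (3), each $A_i\otimes E$ is a field, so each $N(v_i)\ne 0$, since nonzero elements of a field are not zero divisors. Hence the product is nonzero.

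The main obstacle will be the specialization step: one must carefully verify that every form in Chain Lemma condition (2) is simultaneously nonvanishing at $s_0$ (and at $s_i$) so that Lemma \ref{lem:specialize} applies. A secondary technicality is handling the case where $k(s_i)\otimes_k E$ is not itself a field but a product of fields; then ``$A_i\otimes E$ is a field'' must be read componentwise, and the specialization argument must be applied to each component field to rule out vanishing of $\syma$ there.
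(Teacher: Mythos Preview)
Your treatment of (2) and (3) is correct and matches the paper's approach: both invoke part (2) of the Chain Lemma, and you spell out the specialization step (using part (4) to guarantee that all the forms $\gamma,\gamma_1',\dots,\gamma_{n-1}'$ are nonzero at $s_0$, then applying Lemma \ref{lem:specialize}) that the paper leaves implicit in the phrase ``$\syma\ne0$ implies that $\gamma$ is nontrivial.''

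For (1) the routes diverge. The paper does \emph{not} deduce (1) from (2)--(3); it cites part (4) of the Chain Lemma directly, together with the observation (from \ref{Kummeralgebra}) that the norm on $L$ is induced from the $p$-form $\gamma_1$ on $J$. Your deduction of (1) from (2)--(3) forces you to alter the hypothesis: as stated, (1) assumes only that $\syma$ is nonzero in $K^M_n(k(s_i))/p$, whereas to invoke (3) you need it nonzero in $K^M_n(E(s_i))/p$ for $i\ge1$. You call this an ``interpretation,'' but it is a genuine strengthening of the hypothesis, hence a weakening of what you prove. Under the literal hypothesis, running the argument of (2) at each $s_i$ gives only that $A_i$ is a field; it does not give that $A_i\otimes E$ is a field, so you cannot immediately conclude that $N_{A_i\otimes E/E}(v_i)\ne0$ for an arbitrary nonzero $v_i\in\cB|_{u_i}$. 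That gap is exactly the ``secondary technicality'' you flag at the end, and it is not resolved by your outline. (In the actual applications, Lemma \ref{lem:tnotzero} and Corollary \ref{cor:Theorem5}, the points $s_i$ are $k$-rational and the standing assumption is $\syma|_E\ne0$, so your stronger reading happens to be what is used there; but it is not what part (1) asserts.)
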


\begin{proof}
The first assertion follows from part (4) of the Chain Lemma 
\ref{thm:chainlemma}, since by \ref{Kummeralgebra} the norm on $L$
is induced from the $p$-form $\gamma_1$ on $J$.
Assertions (2--3) follow from part (2) of the Chain Lemma, since
$\syma\ne0$ implies that $\gamma$ is nontrivial.
\end{proof}

\begin{defn}\label{def:barT}
Let $\A^E$ denote the Weil restriction $\mathrm{Res}_{E/k}\A^1$, 
characterized by $\A^E(F) = F\otimes_k E$ (\cite{Weil}).
Let $\barT$ denote the subvariety of $\bP(L\oplus\cO)\times\A^E$ 
consisting of all points $([\alpha:t],w)$ such that $N(\alpha)=t^pw$ in $E$. 
We write $\barT_w$ for the fiber over a point $w\in\A^E$.
Note that $\dim(\barT_w)=p^{n+1}-p=p~\dim(Q_w)$.
\end{defn}

\begin{notate}\label{rem:pointsonT}
Let $([\alpha:t],w)$ be a $k$-rational point on $\barT$, so that
$w\in\A^E(k)=E$. We may regard
$[\alpha:t]\in \bP(L\oplus\cO)(k)$ as being
given by a point $u\in U(k)$, lying over a point 
$(s_0,\dotsc,s_{p-1})\in S(k)^{\times p}$, and a nonzero pair 
$(\alpha,t)\in L_{u}\times k$ (up to scalars). 
From the definition of $L$, we see that (up to scalars) 
$\alpha$ determines a $p$-tuple 
$(b_0,b_1+t_1\epsilon,\dots,b_{p-1}+t_{p-1}\epsilon)$,
where $b_i\in \cA|_{s_i}$ and $t_i\in k$.
When $\alpha\ne0$, $b_0\neq 0$ and for all $i>0$, $b_i\ne0$ or $t_i\ne0$.
Finally, writing $A_i$ for $\cA|_{s_i}$, the norm condition says that in $E$:
\[N_{A_0/k}(b_0)\prod\nolimits_{i = 1}^{p-1} 
N_{A_i\otimes E/E} (b_i + t_i\epsilon) = t^p w.
\]
If $k\subseteq F$ is a field extension, then an $F$-point of
$\barT$ is described as above, replacing $k$ by $F$ and $E$ by
$E\otimes_k F$ everywhere.
\end{notate} 

\begin{subrem}\label{rem:morepointsonT} 
If $w\neq 0$, then $\alpha\neq 0$, because $N(\alpha)=t^pw$
and $(\alpha,t)\ne(0,0)$.
\end{subrem}

\begin{lem}\label{lem:tnotzero}
If $\barT$ has a $k$-point with $t=0$ then 
$\syma|_E=0$ in $K^M_n(E)/p$.
\end{lem}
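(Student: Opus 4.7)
The plan is to unwind the description of a $k$-point provided by Notation \ref{rem:pointsonT} and combine the equation $N(\alpha)=0$ with the three parts of Lemma \ref{lem:nosplitalgebras}.

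First I would observe that a $k$-point $([\alpha:t],w)$ with $t=0$ forces $\alpha\neq 0$, since $(\alpha,t)$ represents a projective point and cannot be the zero pair. By Notation \ref{rem:pointsonT}, $\alpha$ then determines a $p$-tuple $(b_0,\,b_1+t_1\epsilon,\,\dotsc,\,b_{p-1}+t_{p-1}\epsilon)$ over a $p$-tuple $(s_0,\dotsc,s_{p-1})$ of $k$-points of $S$, with $b_0\neq 0$ and $(b_i,t_i)\neq(0,0)$ for each $i\geq 1$. Since each $s_i$ is $k$-rational, $k(s_i)=k$ and hence $E(s_i)=E$. The norm condition $N(\alpha)=t^pw=0$ becomes
\[
N_{A_0/k}(b_0)\,\prod_{i=1}^{p-1} N_{A_i\otimes E/E}(b_i+t_i\epsilon)=0 \quad\text{in } E.
\]

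Next I would rule out vanishing of the first factor. Since $\syma$ is nontrivial in $K_n^M(k)/p$ by the standing hypothesis and $k(s_0)=k$, Lemma \ref{lem:nosplitalgebras}(2) makes $A_0$ a field, so $N_{A_0/k}(b_0)\neq 0$ because $b_0\neq 0$. Consequently, there must exist some $i\geq 1$ with $N_{A_i\otimes E/E}(b_i+t_i\epsilon)=0$. Since $b_i+t_i\epsilon\neq 0$, the algebra $A_i\otimes E$ cannot be a field (the norm would be nonzero on a nonzero element). Applying the contrapositive of Lemma \ref{lem:nosplitalgebras}(3) yields $\syma|_{E(s_i)}=0$ in $K_n^M(E(s_i))/p$, and since $E(s_i)=E$ this is exactly the conclusion $\syma|_E=0$.

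I do not expect a substantive obstacle: the argument is essentially a case analysis on which factor of the norm-product vanishes, with parts (2) and (3) of Lemma \ref{lem:nosplitalgebras} handling the two cases. The only point that must be tracked carefully is that $k$-rationality of the point makes all residue fields $k(s_i)$ equal to $k$, which is the bridge that converts the algebraic statement ``$A_i\otimes E$ is not a field'' into the Milnor $K$-theoretic statement $\syma|_E=0$.
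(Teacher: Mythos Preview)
Your proof is correct and follows essentially the same approach as the paper's: both unwind the $k$-point via Notation \ref{rem:pointsonT}, use that $\alpha\neq0$ forces $b_0\neq0$ and $b_i+t_i\epsilon\neq0$, and then invoke Lemma \ref{lem:nosplitalgebras} to conclude that vanishing of the norm product forces $\syma|_E=0$. The only cosmetic difference is that the paper argues by contradiction (assume $\syma|_E\neq0$, so all factors are fields and the product is nonzero), whereas you argue directly by isolating a vanishing factor and applying the contrapositive of part~(3); these are logically equivalent rearrangements of the same argument.
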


\begin{proof}
We use the description of a $k$-point of $\barT$ from 
\ref{rem:pointsonT}. If $t = 0$, then $\alpha\neq 0$, therefore
$b_0\neq 0\in A_0$ and $b_i + t_i\epsilon \neq 0\in A_i\otimes E$. 
By Lemma \ref{lem:nosplitalgebras}, if $\syma|_E \neq 0$ in $K^M_n(E)/p$ 
then $A_0$ and all the algebras $A_i\otimes E$ are fields, so that 
$N(\alpha) = N_{A_0/k}(b_0)\prod_{i=1}^{p-1}
N_{A_i\otimes E/E}(b_i + t_i\epsilon)\neq 0$, a contradiction to $t^pw=0$.
\end{proof}

Consider the projection $\barT\to\A^E$ onto the second factor, and write
$\barT_w$ for the (scheme-theoretic) fiber over $w\in\A^E$.
Combining \ref{lem:nosplitalgebras} with \ref{lem:tnotzero} we obtain
the following consequence (in the notation of \ref{rem:pointsonT}):

\begin{cor}\label{cor:Theorem5}
If $\syma\ne0$ in $K^M_n(E)/p$ and $w\ne0$ is such that $\barT_w$ 
has a $k$-point, 
then $A_0$ and the $A_i\otimes E$ are fields and $w$ is a product of
norms of an element of $A_0$ and elements in the subsets 
$A_i+\epsilon$ of $A_i\otimes_kE$.
\end{cor}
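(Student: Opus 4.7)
The plan is to combine the two preceding lemmas with the explicit parametrization of $k$-points of $\barT$ from \ref{rem:pointsonT}.

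First I would rule out the degenerate case $t=0$. Since $\syma|_E\ne 0$ by assumption, Lemma \ref{lem:tnotzero} (contrapositively) forces every $k$-point of $\barT$ to have $t\ne 0$. Using the scaling freedom in $\bP(L\oplus\cO)$, I may then normalize so that $t=1$, and the defining equation of $\barT_w$ becomes $N(\alpha)=w$ in $E$. Since $w\ne 0$, this also gives $\alpha\ne 0$.

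Next I would identify the algebras. Our $k$-point projects to a $k$-point $(s_0,\dotsc,s_{p-1})\in S(k)^{\times p}$, so $k(s_0)=k$ and $E(s_i)=E$ for each $i\ge 1$. The hypothesis $\syma|_E\ne 0$ forces both $\syma\ne 0$ in $K^M_n(k)/p$ and $\syma|_{E(s_i)}\ne 0$ in $K^M_n(E(s_i))/p$; Lemma \ref{lem:nosplitalgebras}(2) then yields that $A_0$ is a field, while Lemma \ref{lem:nosplitalgebras}(3) yields that each $A_i\otimes E$ is a field.

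Finally, for the product-of-norms identity I would unpack the description of $L$ in \ref{rem:pointsonT}: with $t=1$, the element $\alpha\in L_u$ decomposes as a tensor product $b_0\otimes(b_1+t_1\epsilon)\otimes\cdots\otimes(b_{p-1}+t_{p-1}\epsilon)$ with $b_0\in A_0$ nonzero and each $b_i+t_i\epsilon\in\cB|_{s_i}$ nonzero. Under this decomposition the norm $N$ factors term-by-term, giving
$$w = N_{A_0/k}(b_0)\prod_{i=1}^{p-1}N_{A_i\otimes E/E}(b_i+t_i\epsilon),$$
which is the asserted expression of $w$ as a product of a norm from $A_0$ and norms of elements in $A_i+\epsilon\subset A_i\otimes_k E$. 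I do not foresee any real obstacle: the argument is essentially bookkeeping once Lemmas \ref{lem:nosplitalgebras} and \ref{lem:tnotzero} are in hand. The only subtlety worth flagging is the passage from projective to affine coordinates via $t\mapsto 1$, which depends crucially on Lemma \ref{lem:tnotzero}.
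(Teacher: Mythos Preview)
Your proposal is correct and follows exactly the approach indicated in the paper, which simply states that the corollary is obtained by combining Lemma \ref{lem:nosplitalgebras} with Lemma \ref{lem:tnotzero} in the notation of \ref{rem:pointsonT}. Your expansion of this is accurate; the only cosmetic point is that your displayed formula produces elements $b_i+t_i\epsilon$ rather than elements of the coset $A_i+\epsilon$, but this is how the corollary is actually applied later (see the proof of Theorem \ref{Theorem5}), and when $t_i\ne0$ one can rescale and absorb the resulting $t_i^{-p}$ into the $A_0$-norm factor.
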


\begin{subrem}\label{rem:kpointgivesNP}
In Theorem \ref{Theorem6} we will see that if $w$ is a generic element 
of $E$ then such a $k$-point exists. 
%We will then deduce Theorem \ref{Theorem5} from \ref{cor:Theorem5} and Theorem~\ref{thm:DN}.
\end{subrem}

The group $G = \mu_p^n$ acts nicely on $S$ and $J$ by \ref{GonSJ}, and
on $\cA$ and $\bP(\cA)$ by \ref{ex:GKummer}. It acts trivially on $\A^E$,
so $G$ acts on $B$, $U$ and $\barT$ (but not nicely; see \ref{rem:Gproduct}).

In the notation of \ref{rem:pointsonT},
if $([\alpha:t],w)$ is a fixed point of the $G$-action on $\barT$
then the points $u_0\in \bP(\cA)$ and $s_i\in S$ are fixed, and therefore 
are $k$-rational (see \ref{def:Gnice}). 
If $u$ is defined over $F$, each point $(b_i:t_i)$ is fixed in $\cB|_{s_i}$.
Since $S$ acts nicely on $J$, Example \ref{ex:GB} shows that if $t=0$ then
either $t_i\ne0$ (and $b_i\in F\subset A_i\otimes F$) or else $t_i=0$
and $0\ne b_i\in J|_{s_i}^{\otimes r_i}\otimes F \subseteq A_i\otimes F$ 
is for some $r_i$, $0\le r_i<p$.

\begin{lem}\label{lem:Tfix1}
For all $w$, $\Fix_G\barT_w$ is disjoint from the locus where $t=0$.
\end{lem}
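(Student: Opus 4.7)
The plan is to argue by contradiction, and since $\Fix_G(\barT_w)\cap\{t=0\}$ is empty if and only if it is empty after base-change to an algebraic closure $\bar k$, I may reduce to $\bar k$. Suppose then that $q=([\alpha:0],w)\in\barT_w(\bar k)$ is $G$-fixed; since $(\alpha,t)\neq 0$ and $t=0$ we have $\alpha\neq 0$, and the defining equation of $\barT_w$ forces $N(\alpha)=0$ in $E\otimes_k\bar k$. I will derive a contradiction by computing $N(\alpha)$ factor-by-factor and showing it is nonzero.

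First I unpack the fixed-point combinatorics. By Corollary \ref{GonSJ} and Example \ref{ex:GKummer}, the underlying points $s_0,\dots,s_{p-1}\in S$ and $u_0=[b_0]\in\bP(\cA)|_{s_0}$ are all $k$-rational, so $b_0=\lambda\,u^{r_0}$ with $u$ a generator of $J|_{s_0}$, $\lambda\in\bar k^\times$, and $0\le r_0<p$. Applying the analysis of Example \ref{ex:GB} to each $\bP(\cB|_{s_i})$ with $i\ge 1$ (noting that the summand $k\epsilon$ of $\cB|_{s_i}$ is $G$-trivial), the fixed point $(b_i:t_i)$ has one of two shapes: either (a) $b_i=\lambda_i\in\bar k$ with $t_i\in\bar k$ arbitrary, or (b) $b_i=\lambda_i u^{r_i}$ with $\lambda_i\in\bar k^\times$, $0<r_i<p$, and $t_i=0$. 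I then fix a splitting $E\otimes_k\bar k\cong\prod_{j=0}^{p-1}\bar k$ by choosing $\mu\in\bar k$ with $\mu^p=\epsilon^p$, sending $\epsilon\mapsto(\zeta^j\mu)_j$ for a primitive $p$-th root of unity $\zeta$.

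Next I compute $N(\alpha)=N(b_0)\cdot\prod_{i=1}^{p-1}N(b_i+t_i\epsilon)$. Setting $a_i:=\gamma_1(s_i)$, part (4) of the Chain Lemma \ref{thm:chainlemma} combined with $\syma\neq 0$ in $K_n^M(k)/p$ precludes $s_i\in V(\gamma_1)$, so each $a_i\in k^\times$, and the Kummer algebra $A_i=k[u]/(u^p-a_i)$ gives $N(u^r)=a_i^r$. Thus $N(b_0)=\lambda^p a_0^{r_0}$ is a unit of $\bar k$, constant across the components of $E\otimes\bar k$; in case (b), $N(b_i)=\lambda_i^p a_i^{r_i}$ is likewise a constant unit; and in case (a), $b_i+t_i\epsilon$ lies in the subring $E\otimes\bar k\subset A_i\otimes E\otimes\bar k$ and acts by scalar multiplication on the free rank-$p$ module, so $N(b_i+t_i\epsilon)=(\lambda_i+t_i\epsilon)^p$, whose $j$-th component is $(\lambda_i+t_i\zeta^j\mu)^p$.

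The contradiction then follows from a pigeonhole count. A case-(a) factor can vanish in at most one component of $E\otimes\bar k$: when $\lambda_i,t_i\neq 0$, only at the unique $j$ with $\zeta^j\mu=-\lambda_i/t_i$; the subcase $\lambda_i=0$ forces $t_i\neq 0$ and yields $(t_i\epsilon)^p=t_i^p\epsilon^p$, a nonzero constant in every component. Since there are at most $p-1$ case-(a) indices---$U$ has only $p-1$ copies of $\bP(\cB)$---but $E\otimes\bar k$ has $p$ components, some component $j_*$ is missed by every potentially-vanishing factor, and $N(\alpha)$ is nonzero there, contradicting $N(\alpha)=0$. The main obstacle is that $E\otimes\bar k$ is not a field, so nonvanishing of each norm factor individually does not suffice; the key numerical input is that $U$ has exactly one fewer $\bP(\cB)$-factor than the number of components of $E\otimes\bar k$, which is precisely what makes the pigeonhole succeed.
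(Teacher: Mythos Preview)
Your argument is correct and follows essentially the same route as the paper: classify the fixed coordinates $(b_i:t_i)$ into the scalar case and the $J^{\otimes r_i}$ case, observe that the latter contribute unit norm factors (using that $\gamma$ is nonzero at the $k$-rational fixed points $s_i$), and then show the product $\prod_{i\in I}(b_i+t_i\epsilon)^p$ of the remaining $|I|\le p-1$ scalar factors cannot vanish in the degree-$p$ algebra $E\otimes F$. The only cosmetic difference is that you pass to $\bar k$ and phrase the final step as a pigeonhole over the $p$ components of $E\otimes\bar k$, whereas the paper stays over an arbitrary $F$, uses separability of $EF$ to strip the $p$-th power, and then notes that a nonzero product of at most $p-1$ linear forms in $\epsilon$ cannot vanish against the basis $1,\epsilon,\dots,\epsilon^{p-1}$; these are the same counting argument in two guises.
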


\begin{proof}
Suppose $([\alpha:0],w)$ is a fixed point defined over a field $F$
containing $k$. As explained above, 
$b_0\ne0$ and (for each $i>0$) $b_i+t_i\epsilon\ne0$ and either $t_i\ne0$ 
or there is an $r_i$ so that $b_i\in J^{r_i}|_{s_i}\otimes F$. 
Let $I$ be the set of indices such that $t_i\neq0$. 

By Example \ref{ex:GKummer}, 
$b_0\in J|_{s_0}^{\otimes r_0}$ for some $r_0$,
and hence $N_{A_0}(b_0)$ is a unit in $k$, 
because the $p$-form $\gamma$ is nontrivial on $J|_{s_0}$. 
Likewise, if $i\notin I$, then 
$N_{A_i\otimes F/F}(b_i)$ is a unit in $F$.

Now suppose $i\in I$, \ie $t_i\ne0$, and
recall that in this case $b_i\in F\subset\! A_i\otimes\! F$. 
If we write $EF$ for the algebra
$E\otimes F \cong F[\epsilon]/(\epsilon^p - e)$, then the norm 
from $A_i\otimes EF$ to $EF$ is simply the $p$-th power
on elements in $EF$, so
$N_{A_i\otimes EF/EF}(b_i + t_i\epsilon) = (b_i + t_i\epsilon)^p$ 
as an element in the algebra $EF$. Taking the product, and keeping in mind
$t = 0$, we get the equation
\[
\prod\nolimits_{i\in I} N_{A_i\otimes EF/EF}(b_i + t_i\epsilon)
 = \prod\nolimits_{i\in I}(b_i + t_i\epsilon)^p = 0.\]

Because $EF$ is a separable $F$-algebra, it has no
nilpotent elements. We conclude that
\[
\prod\nolimits_{i\in I}(b_i + t_i\epsilon) = 0.
\] 
The left hand side of this equation is a polynomial of degree at most
$p-1$ in $\epsilon$; since $\{1,\epsilon,\dotsc,\epsilon^{p-1}\}$ is a
basis of $F\otimes E$ over $F$, that polynomial must be zero.
This implies that $b_i = t_i = 0$ for some $i$, a contradiction.
\end{proof}

\begin{prop}\label{prop:thm6(1)}
If $w\in\A^E$ is generic then 
$\Fix_G\barT_w$ lies in the open subvariety where
$t\prod_{i = 1}^p t_i \neq 0$. 
\end{prop}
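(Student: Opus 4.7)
The plan is to show that the subset $\Fix_G\barT \cap \bigl(\{t=0\} \cup \bigcup_{i=1}^{p-1}\{t_i=0\}\bigr)$ projects to a proper closed subvariety of $\A^E$; taking $w$ to lie outside this subvariety plus $\{w=0\}$ will give the conclusion. Lemma \ref{lem:Tfix1} already disposes of the $\{t=0\}$ part over every fibre, so the remaining work concerns the conditions $t_i = 0$ for $i\ge 1$. The approach is a dimension count over the components of $\Fix_G\barT$, organized by the combinatorial type of the fixed point in the underlying bundles $\bP(\cA)$ and $\bP(\cB)$.

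By Examples \ref{ex:GKummer} and \ref{ex:GB}, over each $k$-rational fixed point $s \in \Fix_G S$ the fixed locus of $\bP(\cA)|_s$ consists of the $p$ isolated points $[J|_s^{\otimes r}]$, $0 \le r < p$, while that of $\bP(\cB)|_s$ consists of $p-1$ isolated points $[J|_s^{\otimes r}]$ ($0 < r < p$) together with the projective line $\ell_s = \bP(\cO|_s \oplus \cO|_s\,\epsilon)$. Each irreducible component of $\Fix_G U$ is therefore indexed by a tuple $(s_0,\dots,s_{p-1}) \in (\Fix_G S)^p$, an exponent $r_0 \in \{0,\dots,p-1\}$, and, for each $i \ge 1$, either an exponent $0<r_i<p$ or the line $\ell_{s_i}$; writing $I \subseteq \{1,\dots,p-1\}$ for the set of indices at which $\ell_{s_i}$ is selected, the component has dimension $|I|$.

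Next, one computes the character by which $G$ acts on $L|_u$. Using Lemma \ref{lem:Glines}, the tautological line $\bL(\cB)|_{u_i}$ carries the nontrivial character $\chi_{s_i}^{r_i}$ at an isolated fixed point and the trivial character along $\ell_{s_i}$, whence $\chi_u = \chi_{s_0}^{r_0}\prod_{i \notin I}\chi_{s_i}^{r_i}$. If $\chi_u \neq 1$, then by Example \ref{ex:Gprojective} the only $G$-fixed points of $\bP(L\oplus\cO)|_u$ are $[L|_u:0]$ and $[0:1]$; combined with the norm condition $N(\alpha)=t^p w$, neither contributes to $\Fix_G\barT_w$ for $w \neq 0$. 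If $\chi_u = 1$, the entire fibre $\bP(L\oplus\cO)|_u$ is $G$-fixed and the resulting component of $\Fix_G\barT$ has dimension $|I|+1$.

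Since $\dim\A^E = p$, a component can project dominantly only when $|I|=p-1$, i.e.\ every $u_i$ lies on $\ell_{s_i}$; in that case $\chi_u = \chi_{s_0}^{r_0}$ forces $r_0=0$, so exactly one dominant component exists per choice of $(s_0,\dots,s_{p-1})$. Within it, each sublocus $\{t_j=0\}$ collapses $\ell_{s_j}$ to its point $[1:0]$ and so projects to a proper closed subvariety of $\A^E$; all non-dominant components ($|I|<p-1$, which automatically forces $t_i=0$ for some isolated index $i$) likewise project to proper subvarieties. The finite union of these bad images, together with $\{w=0\}$, is a proper closed subvariety of $\A^E$, and for generic $w$ outside it $\Fix_G\barT_w$ lies in the open locus $\{t\ne 0\}\cap\bigcap_{i=1}^{p-1}\{t_i\ne 0\}$. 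The main technical obstacle is confirming that the image of the dominant component has full dimension $p$ (rather than smaller), which reduces to showing that the map $(\lambda,t_1,\dots,t_{p-1}) \mapsto \bigl(\lambda\prod_{i=1}^{p-1}(1+t_i\epsilon)\bigr)^p$ from $\A^p$ to $\A^E$ is dominant; this holds in characteristic zero by dominance of the $p$-th power map on $E^\times$ combined with a direct Jacobian computation.
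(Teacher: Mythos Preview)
Your argument is correct, but it follows a different route from the paper's.  The paper works directly with the norm equation: given a fixed point with $t=1$ and some $t_j=0$, it observes that for each $i\notin I$ the factor $N_{A_i\otimes EF/EF}(b_i)$ lies in $F^\times$, so the equation $N(\alpha)=w$ reduces to $\prod_{i\in I}(b_i+t_i\epsilon)^p=\xi w$ with $\xi\in F^\times$; viewing this in $\bP(E)$, the point $[w]$ lies in the image of $\prod_{i\in I}\bP^1\to\bP(E)$, which is a proper closed subvariety because $|I|\le p-2$.  No component analysis or character computation is needed.

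Your approach instead classifies the irreducible components of $\Fix_G\barT$ by their combinatorial type, computes the character of $G$ on $L$ along each, and bounds dimensions.  This is more systematic and yields structural information that is in any case needed for Theorem~\ref{Theorem6}; the paper's argument is shorter and more self-contained for the proposition alone.  One remark: your closing paragraph about the ``main technical obstacle'' (dominance of the big component) is unnecessary here.  The proposition only requires that the \emph{bad} locus projects into a proper closed subset of $\A^E$, and your dimension bounds already give that, since any component with $|I|<p-1$ has dimension at most $|I|+1\le p-1$, and the sublocus $\{t_j=0\}$ of the $|I|=p-1$ component has dimension $p-1$ as well.  Whether the remaining component actually surjects onto $\A^E$ is irrelevant to the statement; you may simply delete that paragraph.
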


\begin{subrem}\label{rem:w-fiber}
The open subvariety in \ref{prop:thm6(1)} is $G$-isomorphic 
(by setting $t$ and all
$t_i$ to $1$) to a closed subvariety of $\A(\cA)^p$, namely the fiber
over $w$ of the map $N_{\cA\otimes E/E}: \A(\cA)^p\to\A^E$ defined by
\[
N(b_0,\dots,b_{p-1}) =  N_{A_0/k}(b_0)
\prod\nolimits_1^{p-1}N_{A_i\otimes E/E}(b_i+\epsilon).
\]
Indeed, $\A(\cA)^p$ is $G$-isomorphic to an open subvariety of $\barT$
and $N_{A_i\otimes E/E}$ is the restriction of $\alpha\mapsto N(\alpha)$.
\end{subrem}

\begin{proof}
By Lemma \ref{lem:Tfix1}, $\Fix_G\barT_w$ is disjoint from the locus
where $t=0$, so we may assume that $t=1$. Since $w$ is generic, we may
also take $w\ne0$. So let $([\alpha :1],w)$ be a fixed point defined 
over $F\supseteq k$ for which $t_j=0$. 
As in the proof of the previous
lemma, we collect those indices $i$ such that $t_i\neq 0$ into a set
$I$, and write $EF$ for $E\otimes_kF$. 
Recall that for $i\in I$, we have $b_i\in F$. Since $j\notin I$,
we have that $\lvert I\rvert \leq p-2.$ For $i\notin I$,
\[
N_{A_i\otimes EF/EF}(b_i + t_i\epsilon) = N_{A_i\otimes F/F}(b_i)\in F^\times
\]
(the norm cannot be $0$ as $t^p w = w\neq 0$ by assumption).
So we get that 
\[ \prod\nolimits_{i\in I} (b_i + t_i\epsilon)^p = \xi w \]
for some $\xi\in F^\times$. If we view $\xi w$ as a point in
$\bP(E)(F) = (EF-\{0\})/F^\times$, 
then we get an equation of the form
\[
 \left[\prod\nolimits_{i\in I} (b_i + t_i\epsilon)^p\right] = [w].
\]
But the left-hand side lies in the image of the morphism
$\prod_{i\in I}\bP^1 \to \bP(E)$ which sends $[b_i:t_i]\in
\bP^1(F)$ to $[\prod (b_i + t_i\epsilon)^p]\in \bP(E)(F).$
Since $\lvert I\rvert\leq p-2$, this image is 
a proper closed subvariety, proving the assertion for generic $w$.
\end{proof}

\begin{thm}\label{Theorem6}
For a generic closed point $w\in \A^E$, $\barT_w$ is $G$-fixed point 
equivalent to the disjoint union of $(p-1)!$ copies of $\bP(\cA)^p$
\end{thm}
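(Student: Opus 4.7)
The plan is to reduce to the affine model of Remark \ref{rem:w-fiber}, compute the $G$-fixed locus explicitly there, and match it with $(p-1)!$ copies of $\bP(\cA)^p$ both at the level of fixed points and tangent representations.

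First I would apply Proposition \ref{prop:thm6(1)} to restrict to the open subvariety of $\barT_w$ where $t\prod_i t_i\ne 0$, and use Remark \ref{rem:w-fiber} to identify it $G$-equivariantly with the fiber $N^{-1}(w)\subset\A(\cA)^p$, where
\[
N(b_0,\dots,b_{p-1})=N_{A_0/k}(b_0)\prod_{i=1}^{p-1}N_{A_i\otimes E/E}(b_i+\epsilon).
\]
By \ref{GonSJ} and Example \ref{ex:GKummer}, $G=\mu_p^n$ acts nicely on $\cA$, so at each $s_i\in\Fix_GS$ the fiber $\cA|_{s_i}=\bigoplus_{j=0}^{p-1}J|_{s_i}^{\otimes j}$ decomposes into distinct one-dimensional characters. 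Consequently any $G$-fixed vector $b_i\in\cA|_{s_i}$ lies in the trivial eigenspace $k\cdot 1$, forcing $b_i=\beta_i\cdot 1$ for scalars $\beta_i$.

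The norm condition on the fixed locus becomes $\beta_0^p\prod_{i=1}^{p-1}(\beta_i+\epsilon)^p=w$ in $E$, or equivalently $\tilde\mu^p=w$ in $K\otimes_kE$ with $\tilde\mu:=\beta_0\prod(\beta_i+\epsilon)$, where $K/k$ is a separable extension containing $\sqrt[p]{e}$ (so $K\otimes E\cong K^p$). For generic $w$, this equation has $p^p$ distinct solutions $\tilde\mu$ in $K\otimes E$; each such $\tilde\mu$, viewed as a polynomial of degree $\le p-1$ in $\epsilon$, is generically of degree exactly $p-1$ with $p-1$ distinct roots, which admit $(p-1)!$ orderings, each producing a unique tuple $(\beta_0,\dots,\beta_{p-1})$. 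Combined with $|\Fix_GS|^p$ choices of $s=(s_0,\dots,s_{p-1})$, this yields $(p-1)!\cdot p^p\cdot|\Fix_GS|^p$ geometric fixed points of $\barT_w$ over $K$, matching $(p-1)!\cdot|\Fix_G\bP(\cA)^p|$. The bijection uses the ordering of roots as the copy index and, within a copy, sends $\tilde\mu=(\zeta^{r_j}w_j^{1/p})_j$ under $K\otimes E\cong K^p$ to the fixed point of $\bP(\cA)^p$ labeled by $(r_0,\dots,r_{p-1})$.

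For tangent $G$-representations I would work entirely in the $\A(\cA)^p$ picture. At a $G$-fixed point, $T_{(s,b)}\A(\cA)^p=\bigoplus_iT_{s_i}S\oplus\bigoplus_i\cA|_{s_i}$ decomposes via $\cA|_{s_i}=k\oplus\bigoplus_{j=1}^{p-1}J|_{s_i}^{\otimes j}$; since each $T_{s_i}S$ has no trivial $G$-summand (fixed points of $S$ being isolated), the trivial isotypical part of this tangent space is exactly $\bigoplus_i k=k^p$. The differential $dN\colon T\A(\cA)^p\to E$ maps into the trivial $G$-representation $E$, so it annihilates every nontrivial isotypical summand, and generically on $w$ it restricts to an isomorphism on the trivial part. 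Hence $TN^{-1}(w)=\ker dN$ equals the nontrivial part $\bigoplus_iT_{s_i}S\oplus\bigoplus_i\bigoplus_{k=1}^{p-1}J|_{s_i}^{\otimes k}$. At any fixed point of $\bP(\cA)^p$, the vertical tangent $\bigoplus_{j\ne r_i}J|_{s_i}^{\otimes(j-r_i)}$ is likewise $\bigoplus_{k=1}^{p-1}J|_{s_i}^{\otimes k}$ as a $G$-representation (independent of the shift $r_i$), so the two tangent representations agree.

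The main obstacle is showing that $dN$ is generically an isomorphism on the $k^p$ of trivial $G$-directions. Concretely this $k^p$ is spanned by $\partial/\partial\beta_i$ for $i=0,\dots,p-1$, and up to the scalar $p\tilde\mu^p$ the Jacobian is the $p\times p$ matrix whose zeroth row is constant $1/\beta_0$ and whose remaining rows are $1/(\beta_i+\epsilon^{(j)})$, indexed by the $p$ images of $\epsilon$ under $K\otimes E\cong K^p$. Verifying nondegeneracy of this Cauchy-like matrix for generic $w$ is the one genuinely computational step and the place where genericity of $w$ is really used.
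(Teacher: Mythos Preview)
Your approach is essentially the same as the paper's: reduce via Proposition~\ref{prop:thm6(1)} and Remark~\ref{rem:w-fiber} to the affine model $N^{-1}(w)\subset\A(\cA)^p$, identify $\Fix_G\A(\cA)^p=\A^p$, count the fiber of $N|_{\A^p}:\A^p\to\A^E$, and read off the tangent representation as the nontrivial part of $T\A(\cA)^p$.

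The one place you make your life harder than necessary is the ``main obstacle'' at the end. The paper avoids the Cauchy-matrix Jacobian computation entirely: once you know that $N|_{\A^p}:\A^p\to\A^E$, $b\mapsto b_0^p\prod_{i=1}^{p-1}(b_i+\epsilon)^p$, is finite of degree $p^p(p-1)!$ (which your own factoring argument already establishes), then for generic $w$ the fiber has exactly $p^p(p-1)!$ geometric points, so the map is \'etale there and $dN$ is automatically an isomorphism on $T_b\A^p$. No matrix needs to be written down. In other words, your fixed-point count and your tangent-space step are really the same computation, and recognizing this collapses the obstacle. Everything else in your proposal is correct and matches the paper's argument, though your explicit bijection labeling copies by root orderings and exponents $(r_0,\dots,r_{p-1})$ is more than is needed: any bijection will do, since all tangent representations on either side are identical.
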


\begin{proof}
Since both lie over $S$, it suffices to consider a $G$-fixed point 
$s=(s_0,\dotsc,s_{p-1})$ in $S(k)^p$ 
and prove the assertion for the fixed points over $s$. Because
$G$ acts nicely on $S$ and $J$, $k(s)=k$ and (by Lemma \ref{lem:Glines}) 
$G$ acts on $J_{s}$ via a projection $G\to\mu_p$ as the standard
representation of $\mu_p$. Note that $J_{s}=J_{s_i}$ for all $i$.

By Example \ref{ex:GKummer}, there are precisely $p$ fixed points on 
$\bP(\cA)$ lying over a given fixed point $s_i\in S(k)$, and at each of 
these points the (vertical) tangent space is the reduced regular 
representation of $\mu_p$. Thus each fixed point in $\bP(\cA)^p$ is
$k$-rational, the number of fixed points over $s$ is $p^p$, 
and each of their tangent spaces is the
sum of $p$ copies of the reduced regular representation.

Since $w$ is generic, we saw in \ref{prop:thm6(1)} that all the fixed points of
$\barT_w$ satisfy $t\neq 0$ and $t_i\neq 0$ for $1\leq i\leq p-1$.
By Remark \ref{rem:w-fiber}, they lie  in the
affine open $\A(\cA)^p$ of $\bP(L\oplus\cO)$. Because $\mu_p$ acts nicely on
$J_s$, an $F$-point $b=(b_0,\dots,b_{p-1})$ of $\A(\cA)^p$ is fixed 
if and only if each $b_i\in F$. That is, $\Fix_G(\A(\cA)^p)=\A^p$.
Now the norm map restricted to the fixed-point set is just the map
$\A^p\to\A^E$ sending $b$ to $b_0^p\prod_{i=1}^{p-1}(b_i + \epsilon)^p$.
This map is finite of degree $p^p(p-1)!$, and \'etale for generic $w$,
so $\Fix_G(\barT_w)$ has $p^p(p-1)!$ geometric points for generic $w$. 
This is the same number as the fixed points
in $(p-1)!$ copies of $\bP(\cA)$ over $s$, so it suffices to check their
tangent space representations.

At each fixed point $b$, the tangent space of $\A(\cA)^p$ (or $\barT$) 
is the sum of $p$ copies of the regular representation of $\mu_p$. Since
this tangent space is also the sum of the tangent space of $\A^p$ 
(a trivial representation of $G$) and the normal bundle of $\A^p$ in 
$\barT$, the normal bundle must then be $p$ copies of the reduced 
regular representation of $\mu_p$. Since the tangent space
of $\A^p$ maps isomorphically onto the tangent space of $\A^E$ at $w$,
the tangent space of $\barT_w$ is the same as the normal bundle of 
$\A^p$ in $\barT$, as required.
\end{proof}

\begin{subrem}
The fixed points in $\barT_w$ are not necessarily rational points,
and we only know that the isomorphism of the tangent spaces at the fixed 
points holds on a separable extension of $k$. This is parallel to the
situation with the fixed points in $Q_w$ described in Theorem \ref{thm:Xb}.
\end{subrem}

\section{A $\nu_{n}$-variety.}\label{sec:bpatheorem}

The following result will be needed in the proof of the norm principle.

\begin{thm}\label{thm:toddPA}
Let $S$ be the variety of the chain lemma for some symbol $\syma\in K_n^M(k)/p$ and $\cA = \bigoplus_{i=0}^{p-1} J^{\otimes i}$ the sheaf of Kummer algebras over $S$. Then the projective bundle $\bP(\cA)$ has dimension $d = p^n - 1$ and $p^2\nmid s_d(\bP(\cA)).$ 
\end{thm}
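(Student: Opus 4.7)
The plan is to compute $s_d(\bP(\cA))$ by exploiting the projective bundle structure and then pin down the $p$-adic valuation using part (6) of the Chain Lemma. Since $\cA$ has rank $p$, the projection $\pi\colon\bP(\cA)\to S$ has relative dimension $p-1$, so $\dim\bP(\cA) = (p^n - p) + (p-1) = p^n - 1 = d$. The Euler sequence gives $T_{\bP(\cA)/S} + \cO = \pi^*\cA \otimes \cO(1)$ in $K$-theory, and combined with $T_{\bP(\cA)} = T_{\bP(\cA)/S} + \pi^*T_S$ this yields $s_d(\bP(\cA)) = s_d(\pi^*T_S) + s_d(\pi^*\cA\otimes\cO(1))$ (using $s_d(\cO) = 0$). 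The first term vanishes because $d > \dim S$ forces $CH^d(S) = 0$. Writing $\lambda = c_1(J)$ and $h = c_1(\cO(1))$, the splitting $\cA = \bigoplus_{i=0}^{p-1} J^{\otimes i}$ gives Chern roots $h + i\lambda$ on $\pi^*\cA\otimes\cO(1)$, so $s_d(\bP(\cA)) = \sum_{i=0}^{p-1}(h + i\lambda)^d$ in $CH^d(\bP(\cA))$.

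I would then push forward to $S$ via $\pi_*(h^j) = s_{j-p+1}(\cA)$. Because $c(\cA) = \prod_{i=0}^{p-1}(1 + i\lambda)$ is an integer polynomial in $\lambda$, every Segre class has the form $s_\ell(\cA) = a_\ell\lambda^\ell$ for an integer $a_\ell$, reducing the problem to $\deg\,s_d(\bP(\cA)) = T\cdot\deg(\lambda^{\dim S})$ with $T = \sum_{k=0}^{\dim S}\binom{d}{k}\sigma_k\,a_{\dim S - k}$, where $\sigma_k = \sum_{i=0}^{p-1}i^k$. Standard mod-$p$ congruences give $c(\cA) \equiv 1 - \lambda^{p-1}\pmod p$, whence $a_\ell \equiv 1$ when $(p-1)\mid\ell$ and $a_\ell \equiv 0$ otherwise; similarly $\sigma_0 = p$, $\sigma_k \equiv -1\pmod p$ for $k \ge 1$ with $(p-1)\mid k$, and $\sigma_k \equiv 0$ otherwise. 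Since $(p-1)\mid\dim S$, terms with $(p-1)\nmid k$ carry two independent factors of $p$ and vanish modulo $p^2$; separating the $k = 0$ contribution (which gives $p\pmod{p^2}$), one finds $T \equiv p - M + p\cdot(\text{integer})\pmod{p^2}$ where $M = \sum_{k\ge 1,\,(p-1)\mid k,\,k\le\dim S}\binom{d}{k}$.

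The sum $M\pmod p$ is evaluated by Lucas's theorem: $\binom{p^n-1}{k}\equiv (-1)^{\mathrm{dig}_p(k)}\pmod p$ (where $\mathrm{dig}_p$ denotes the base-$p$ digit sum), which equals $+1$ for $(p-1)\mid k$ since $p$ is odd. A character-sum count then identifies $M\pmod p$ with the integer $(p^n - p)/(p - 1)$, which is itself divisible by $p$. Hence $T \equiv pu\pmod{p^2}$ for some integer $u$, and the main obstacle is verifying that $u$ is a unit mod $p$. This requires tracking the $p$-parts of $\sigma_k$ and $a_\ell$ modulo $p^2$ carefully and checking that the various correction terms combine to leave a nonzero residue; the cleanest route is likely via the identity $\sum_{j=0}^{p-1}(X + j)^d = \tfrac{1}{d+1}[B_{d+1}(X+p) - B_{d+1}(X)]$, which together with von Staudt--Clausen organises the relevant $p$-integrality in terms of Bernoulli number denominators.

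Finally, part~(6) of the Chain Lemma~\ref{thm:chainlemma} says $\deg(\lambda^{\dim S})$ is prime to $p$. Combined with $T\equiv pu\pmod{p^2}$ and $u$ a unit mod $p$, this gives $v_p(\deg\,s_d(\bP(\cA))) = 1$, so $p^2\nmid s_d(\bP(\cA))$.
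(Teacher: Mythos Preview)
Your setup matches the paper's exactly: the Euler sequence reduction to $\sum_{i=0}^{p-1}(h+i\lambda)^d$, the vanishing of the $T_S$ contribution for dimension reasons, and the appeal to part~(6) of the Chain Lemma to handle $\deg(\lambda^{\dim S})$. Your Segre-class pushforward is just another way to extract the coefficient of $h^{p-1}$ after reducing modulo the projective-bundle relation; your integer $T$ is precisely the paper's coefficient $a_{p-1}$.

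The gap is that you have not actually proved $T\equiv p\cdot(\text{unit})\pmod{p^2}$. Your argument shows $T\equiv p - M + p\cdot(\text{correction})\pmod{p^2}$ with $p\mid M$, which only yields $p\mid T$; determining $T/p\pmod p$ requires both $M/p\pmod p$ \emph{and} the mod-$p$ value of the correction term, which in turn needs the second $p$-adic digits of all the $\sigma_k$ and $a_\ell$. You acknowledge this and gesture at Bernoulli polynomials, but the identity you quote has $d+1=p^n$ in the denominator, so extracting the needed $p$-adic information from it is not straightforward and you have not carried it out.

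The paper avoids this bookkeeping entirely. Setting $x=1$, it works in the ring $\Z/p^2[y]/\prod_{i=0}^{p-1}(y-i)\cong\prod_{i=0}^{p-1}\Z/p^2$, where the coefficient of $y^{p-1}$ in $u_m=\sum_i(y-i)^{p^m-1}$ (after reduction) is what you want. The key observation is that every $a\in\Z/p^2$ satisfies $a^{p^2-p}\in\{0,1\}$, hence $a^{p^m-1}=a^{p-1}$ for $m\ge1$; evaluating at each $y=j$ shows $u_m=u_1$ in the product ring. But $u_1=\sum_i(y-i)^{p-1}$ is already a polynomial of degree $p-1$ with leading coefficient $p$, so $a_{p-1}\equiv p\pmod{p^2}$ immediately. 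This two-line reduction to $m=1$ replaces all of your $\sigma_k$, $a_\ell$, Lucas, and Bernoulli analysis.
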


\begin{proof}
Let $\pi: \bP(\cA) \to S$ be the projection. The statement about the dimension is trivial.
In the Grothendieck group $K_0(\bP(\cA))$, we have that 
\[
[T_{\bP(\cA)}] = \pi^*([T_S]) + [T_{\bP(\cA)/S}]
\] 
where $T_{\bP(\cA)/S}$ is the relative tangent bundle. The class $s_d$ is additive, 
and the dimension of $S$ is less than $d$, so we conclude that $s_d(\bP(\cA)) = s_d(T_{\bP(\cA)/S}).$ 
Now $[T_{\bP(\cA)/S}] = [\pi^*(\cA)\otimes \cO(1)_{\bP(\cA)/S}] - 1$; applying additivity again, together with the definition of $s_d$ and the decomposition of $\cA$ and hence $\pi^*(\cA)$ into line bundles, we obtain
\[ s_d(\bP(\cA)) = \deg \sum_{i = 0}^{p-1} c_1(\pi^*J^{\otimes i}\otimes \cO(1))^d.\]
The projective bundle formula presents the Chow ring $CH^*(\bP(\cA))$ as: 
\[ CH^*(\bP(\cA)) = CH^*(S)[y]/(\prod_{i = 0}^{p-1}(y - ix)) \]
where $x = -c_1(J)\in CH^1(S)$ and $y = c_1(\cO(1))\in CH^1(\bP(\cA)).$
Then $s_d(\bP(\cA))$ is the degree of the following element of the ring $CH^*(\bP(\cA))$:
\[
s'_d(\bP(\cA)) = \sum\nolimits_{i = 0}^{p-1} (y - ix)^d = \sum\nolimits_{i = 0}^{p-1} a_i y^i x^{d-i}
\]
for some integer coefficients $a_i$. Since $x\in CH^1(S)$, we have $x^r=0$ for any $r > \dim(S) = p^n - p$.
It follows that $s'_d(\bP(\cA)) = a_{p-1} y^{p-1} x^{\dim(S)}$. By part (6) of the Chain Lemma \ref{thm:chainlemma}, 
the degree of $x^{\dim(S)} = (-1)^{\dim(S)} c_1(J)^{\dim(S)}$ is prime to $p$. 
In addition, $\pi_*(y^{p-1}) = \pi_*(c_1(\cO(1))^{p-1}) = [S]\in CH^0(S)$. By the projection formula
$s_d(\bP(\cA))=a_{p-1}\deg\, x^{\dim(S)}$. Thus to prove the theorem, it suffices to show that 
$a_{p-1} \equiv p \pmod{p^2}$; this algebraic calculation is achieved in Lemma \ref{lem:coefficient} below.
\end{proof}

\begin{lem}\label{lem:coefficient}
In the ring $R = \Z/p^2 [x,y]/(\prod_{i=0}^{p-1} (y - ix))$, the coefficient of $y^{p-1}$ in 
$u_m = \sum_{i = 0}^{p-1} (y - ix)^{p^m\!-1}$ is $px^b$, with $b = p^m - p$.
\end{lem}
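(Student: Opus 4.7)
The plan is to localize at $x$ so that the Chinese Remainder Theorem applies, reduce extraction of the $y^{p-1}$-coefficient to a Lagrange interpolation formula, and then evaluate the resulting combinatorial sum modulo $p^2$ using classical arithmetic identities.

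The Vandermonde $\prod_{0\le i<j\le p-1}(j-i)$ is a product of integers in $\{1,\dots,p-1\}$ and therefore a unit modulo $p^2$; for $i\ne j$ the factors $y-ix$ and $y-jx$ are consequently coprime in $\Z/p^2[x,x^{-1}][y]$. The Chinese Remainder Theorem gives an isomorphism $R[x^{-1}]\cong\prod_{j=0}^{p-1}\Z/p^2[x,x^{-1}]$ via the projections $\phi_j\colon y\mapsto jx$, and applied to $u_m$ this yields $\phi_j(u_m) = x^{p^m-1}\sum_{i=0}^{p-1}(j-i)^{p^m-1}$. By homogeneity of $f$ one has $u_m=\sum_{k=0}^{p-1}a_k y^k$ with $a_k\in\Z/p^2[x]$, so $j\mapsto\phi_j(u_m)$ is a polynomial in $j$ of degree $\le p-1$ whose leading coefficient is $a_{p-1}x^{p-1}$. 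Using $\prod_{i\ne j}(j-i)=(-1)^j j!(p-1-j)!$ (valid for odd $p$), Lagrange interpolation yields
\[
a_{p-1}x^{p-1} \;=\; \frac{x^{p^m-1}}{(p-1)!}\, T,\qquad T:=\sum_{i,j=0}^{p-1}(-1)^j\binom{p-1}{j}(j-i)^{p^m-1}.
\]
By Wilson's theorem the desired identity $a_{p-1}=p\,x^{p^m-p}$ is equivalent to $T\equiv -p\pmod{p^2}$.

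To simplify $T$, set $l=j-i$ and apply the partial-sum identity $\sum_{j=0}^{r}(-1)^j\binom{p-1}{j}=(-1)^r\binom{p-2}{r}$. Since $p^m-1$ is even for odd $p$, the contributions from $l$ and $-l$ coincide, and one obtains
\[
T \;=\; 2\sum_{k=1}^{p-1}(-1)^k\binom{p-2}{k-1}k^{p^m-1} \;=\; 2\,\Delta^{p-2}\bigl[l^{p^m-1}\bigr]\big|_{l=1},
\]
where $\Delta g(l)=g(l+1)-g(l)$. Combining $\binom{p-2}{k-1}=\tfrac{k}{p-1}\binom{p-1}{k}$ with the $\pmod{p^2}$-expansions $\binom{p-1}{k}\equiv (-1)^k(1-pH_k)$ and $(p-1)^{-1}\equiv -(1+p)$, where $H_k=\sum_{i=1}^k i^{-1}$ is computed in $\Z/p^2$, gives $\binom{p-2}{k-1}\equiv (-1)^{k+1}k\bigl(1-p\sum_{i=2}^k i^{-1}\bigr)\pmod{p^2}$, and hence
\[
\tfrac{1}{2}T \;\equiv\; -\sum_{k=1}^{p-1}k^{p^m} \;+\; p\sum_{k=1}^{p-1}k^{p^m}\sum_{i=2}^k i^{-1}\pmod{p^2}.
\]

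The first sum vanishes modulo $p^2$ for every $m\ge 1$: iterating Fermat's little theorem in the form $k^p\equiv k(1+pq_k)\pmod{p^2}$ yields $k^{p^m}\equiv k^p\pmod{p^2}$, and pairing $k$ with $p-k$ using $(p-k)^p\equiv -k^p\pmod{p^2}$ (immediate from the binomial expansion) gives $\sum_{k=1}^{p-1}k^p\equiv 0\pmod{p^2}$. The second sum is only needed modulo $p$ owing to its factor of $p$; swapping the order of summation and using $\sum_{k=i}^{p-1}k\equiv -i(i-1)/2\pmod p$, one computes $\sum_{k=1}^{p-1}k\sum_{i=2}^k i^{-1}\equiv -\tfrac{1}{2}\pmod p$. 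Therefore $\tfrac{1}{2}T\equiv -p/2\pmod{p^2}$, so $T\equiv -p\pmod{p^2}$ and $a_{p-1}\equiv p\,x^{p^m-p}\pmod{p^2}$, as required. The main obstacle is this final arithmetic step, in which one must handle $\binom{p-1}{k}$ and $\binom{p-2}{k-1}$ to precision $p^2$ via modular harmonic numbers; every earlier step is essentially formal.
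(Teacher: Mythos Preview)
Your proof is correct, but it takes a much longer route than the paper's. Both arguments begin the same way: pass to $R/(x-1)\cong\prod_{j=0}^{p-1}\Z/p^2$ via the Chinese Remainder Theorem (you localize at $x$ instead, which amounts to the same thing by homogeneity). The divergence is in how the $y^{p-1}$-coefficient is extracted.

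The paper observes that for $m\ge2$ one has $a^{p^m-1}=a^{p-1}$ for every $a\in\Z/p^2$: indeed $a^{p^2-p}$ equals $1$ if $p\nmid a$ and $0$ if $p\mid a$ (so also $a^{p-1}=0$ when $p\ge3$), and $p^m-1=(p-1)+k(p^2-p)$ with $k=(p^{m-1}-1)/(p-1)$. This single identity gives $u_m=u_1$ in $R/(x-1)$, and $u_1=\sum_{i}(y-i)^{p-1}$ visibly has leading coefficient $p$. That is the entire argument.

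Your route instead writes the leading coefficient via Lagrange interpolation as $T/(p-1)!$ and then evaluates $T$ modulo $p^2$ by expanding $\binom{p-2}{k-1}$ to second order using modular harmonic numbers, proving $\sum_k k^{p^m}\equiv 0\pmod{p^2}$ by the pairing $k\leftrightarrow p-k$, and computing $\sum_k k\sum_{i=2}^k i^{-1}\equiv -\tfrac12\pmod p$. All of these steps check out (note the stray ``$f$'' in ``by homogeneity of $f$'' should read $u_m$). The payoff of the paper's approach is brevity: one multiplicative fact about $\Z/p^2$ replaces your entire arithmetic endgame. Your approach, on the other hand, makes the coefficient explicit as a closed-form sum, which could be useful if one wanted to analyze other coefficients or other exponents not of the form $p^m-1$.
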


\begin{proof}
Since $u_m$ is homogeneous of degree $p^m-1$, it suffices to determine the coefficient of $y^{p-1}$ in $u_m$ in the ring 
\[R/(x-1) = \Z/p^2[y]/(\prod\nolimits_{i=0}^{p-1} (y-i)) \cong \prod\nolimits_{i=0}^{p-1} \Z/p^2.\]
If $m=1$, then $u_1 = \sum_{i=0}^{p-1} (y-i)^{p-1}$ is a polynomial of degree $p-1$ with leading term $py^{p-1}$. %and if $m=0$ then $u_0 = p$. 
Inductively, we use the fact that for all $a\in \Z/p^2$, we have
\[
a^{p^2-p} = \begin{cases} 0,& \text{if~}p\mid a \\ 1,& \text{else}. \end{cases}
\]
Thus for $m\geq 2$, if we set $k = (p^{m-1} - 1)/(p-1)$, then $a^{p^m-1} = a^{(p-1) + k(p^2-p)} = a^{p-1}\in \Z/p^2$, and therefore
\[ u_m = \sum_{i=0}^{p-1} (y-i)^{p^m-1} = \sum_{i=0}^{p-1} (y-i)^{p-1} = u_1\]
holds in $R/(x-1)$; the result follows. 
\end{proof}

\section{The Norm Principle}\label{sec:NP}

We now turn to the Norm Principle, which concerns the group $A_0(X,\cK_1)$
associated to a variety $X$. In the literature, this group is also known as
$H_{-1,-1}(X)$ and $H^d(X,\cK_{d+1})$, where $d=\dim(X)$. We recall the
definition from \ref{def:A0K1}.

\begin{defn}\label{def:A1}
If $X$ is a regular scheme then $A_0(X,\cK_1)$ is the cokernel of the map
$\oplus_{y}K_2(k(y)) \map{(\partial_{xy})} \oplus_{x} k(x)^\times$.
In this expression, the first sum is taken over all points $y\in X$ of 
dimension~1, and the second sum is over all closed points $x\in X$.
The map $\partial_{xy}:K_2(k(y))\to k(x)^\times$ is the tame symbol 
associated to the discrete valuation on $k(y)$ associated to $x$;
if $x$ is not a specialization of $y$ then $\partial_{xy}=0$.
If $x\in X$ is closed and $\alpha\in k(x)^\times$ we write $[x,\alpha]$
for the image of $\alpha$ in $A_0(X,\cK_1)$.
\end{defn}

The group $A_0(X,\cK_1)$ is covariant for proper morphisms $X\to Y$, and 
clearly $A_0(\Spec k,\cK_1)=k^\times$ for every field $k$. Thus if 
$X\to\Spec(k)$ is proper then there is a morphism 
$N:A_0(X,\cK_1)\to k^\times$, whose restriction to the group of
units of a closed point $x$ is the norm map $k(x)^\times\to k^\times$.
That is, $N[x,\alpha]=N_{k(x)/k}(\alpha)$.

\begin{defn}
When $X$ is smooth and proper over $k$, we write $\oA(X,\cK_1)$ for the
quotient of $A_0(X,\cK_1)$ by the relation that 
$[x_1,N_{x/x_1}(\alpha)]=[x_2,N_{x/x_2}(\alpha)]$ for every closed point 
$x=(x_1,x_2)$ of $X\times_kX$ and every $\alpha\in k(x)^\times$. 
\end{defn}

It is proven in \cite[1.5--1.7]{SJ} that if $X$ has a $k$-rational
point then $\oA(X,\cK_1)=k^\times$; if $X(k)=\emptyset$, then both the kernel
and cokernel of $N:\oA(X,\cK_1)\to k^\times$ have exponent $n$, where
$n$ is the gcd of the degrees $[k(x):k]$ for closed $x\in X$. In
addition, if $x, x'$ are two points of $X$ then for any field map
$k(x')\to k(x)$ over $k$ and any $\alpha\in k(x)^\times$ we have
$[x,\alpha]=[x',N_{x/x'}\alpha]$ in $\oA(X,\cK_1)$.

To illustrate the advantage of passing to $\oA$, consider a cyclic field 
extension $E/k$. Then 
$A_0(\Spec E,\cK_1)=E^\times$ and by Hilbert 90, there is an exact sequence
$$ 0 \to\oA(\Spec E,\cK_1) \to k^\times \to\text{Br}(K/k)\to0.$$

We now suppose that $k$ is a $p$-special field, so that the kernel and
cokernel of $N:\oA(X,\cK_1)\to k^\times$ are $p$-groups, and that
$X$ is a Norm variety (a $p$-generic splitting variety of dimension
$p^n-1$).
The Norm Principle is concerned with reducing the degrees of the field 
extensions $k(x)$ used to represent elements of $\oA(X,\cK_1)$.
For this, the following definition is useful.

\begin{defn}
Let $\tA_0(k)$ denote the subset of elements $\theta$ of 
$\oA(X,\cK_1)$ represented by $[x,\alpha]$ where $k(x)=k$
or $[k(x):k]=p$. If $E/k$ is a field extension, $\tA_0(E)$ denotes the 
corresponding subset of $\oA(X_E,\cK_1)$.
\end{defn}

\begin{lem}\label{tAsubgroup} 
If $k$ is $p$-special and $X$ is a Norm variety, then
$\tA_0(k)$ is a subgroup of $\overline{A}_0(X,\cK_1)$.
\end{lem}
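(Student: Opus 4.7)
The plan is to verify the three subgroup axioms. Containing the identity: $0 = [x,1] \in \tilde{A}_0(k)$ for any closed $x$ with $[k(x):k] \le p$, and such $x$ exists because $X$ is $p$-generic over the $p$-special field $k$ (any splitting field yields a closed point of degree a power of $p$ times something prime to $p$, hence degree $1$ or $p$). Closure under inverses is immediate since $[x,\alpha]^{-1} = [x,\alpha^{-1}]$ has the same residue field. So the substance is closure under products.

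Given $\theta_i = [x_i,\alpha_i] \in \tilde{A}_0(k)$, I first dispose of the trivial sub-cases. If $x_1 = x_2$, then $\theta_1\theta_2 = [x_1,\alpha_1\alpha_2]$. If some $x_i$, say $x_1$, is $k$-rational, then applying the defining relation of $\oA$ at the closed point $z = (x_1,x_2) \in X\times X$ --- whose residue field equals $k(x_2)$ --- collapses $[x_2,\alpha_2]$ to $[x_1, N_{k(x_2)/k}(\alpha_2)]$, so $\theta_1\theta_2 = [x_1, \alpha_1 N(\alpha_2)] \in \tilde{A}_0(k)$.

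The core case is $x_1 \ne x_2$, both of degree exactly $p$, with residue fields $E_i = k(x_i)$. The approach I favor is base change to $E_1$: the point $x_1$ produces a ``diagonal'' $E_1$-rational point of $X_{E_1}$, so by the remark following the definition of $\oA$ (Hilbert 90 plus the exact sequence), $\oA(X_{E_1},\cK_1) = E_1^\times$ via the norm. Pulling back $\theta_2$ along the finite flat base change $f\colon X_{E_1}\to X$ gives a sum $\sum_j [y_j,\alpha_{2,j}]$ indexed by the components of $\mathrm{Spec}(E_1 \otimes_k E_2)$; under the norm identification this collapses to a single $\beta \in E_1^\times$. Combining with $\alpha_1$ at the diagonal $E_1$-point and applying $f_*$ produces a single symbol of the form $[x_1, \alpha_1 \cdot \gamma]$ for an explicit $\gamma \in E_1^\times$ derived from $\beta$, which is an element of $\tilde{A}_0(k)$.

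The main obstacle is that the composition $f_* f^*$ acts as multiplication by $p$ on $\oA$, not as the identity, so the base-change construction does not a priori recover $\theta_1+\theta_2$ itself. To close the gap, one must use the motivic homology interpretation $\oA = H_{-1,-1}$ together with the projection formula and the structural fact (from the remark after Definition \ref{def:A1}) that the kernel and cokernel of $N\colon\oA(X,\cK_1)\to k^\times$ are $p$-groups of controlled size on norm varieties. An alternative route, perhaps cleaner, is to apply the geometric constructions of Section 8 directly: the variety $\barT_w$ for $w = N(\theta_1\theta_2)$ carries enough structure (by the fixed-point equivalence of Theorem \ref{Theorem6}) to witness the desired single-symbol representative, and its $G$-equivariant relation to $\bP(\cA)^p$ translates sums of degree-$p$ symbols into a single one via proper pushforward. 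Verifying that the resulting symbol really equals $\theta_1 + \theta_2$ (and not merely shares its norm) is the delicate technical step, and it is where the hypothesis that $X$ is a Norm variety must be used in an essential way.
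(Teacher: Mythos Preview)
Your proposal has a genuine gap, and you have essentially identified it yourself: the base-change argument via $f_*f^*$ only recovers $p(\theta_1+\theta_2)$, not $\theta_1+\theta_2$, and you do not close this gap. The ``alternative route'' you sketch, invoking Theorem~\ref{Theorem6} and the constructions of Section~\ref{sec:NP} and beyond, is circular: the proof of Theorem~\ref{thm:degree} uses the Multiplication Principle (Lemma~\ref{lem:multprinc}), whose proof in this paper is \emph{deduced from} Lemma~\ref{tAsubgroup}; and Theorem~\ref{Theorem4} invokes Lemma~\ref{tAsubgroup} directly. So you cannot use the Norm Principle machinery to establish this lemma.

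The paper's proof is a one-line citation: closure under addition is exactly the Multiplication Principle of Suslin--Joukhovitski \cite[5.7]{SJ}, a substantial geometric result whose proof uses the Chain Lemma~\ref{thm:chainlemma}. That result says precisely that for $[x,\alpha],[x',\alpha']\in\tA_0(k)$ there exists $[x'',\alpha'']\in\tA_0(k)$ with $[x,\alpha]+[x',\alpha']=[x'',\alpha'']$. This is not something one can extract from formal properties of $\oA$ or from base change; it requires the explicit norm-variety constructions carried out in \cite{SJ}. Your treatment of nonemptiness and closure under inverses is fine and matches the paper, but the heart of the lemma is an external input you have not supplied.
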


\begin{proof}
By the Multiplication Principle \cite[5.7]{SJ}, which depends upon the
Chain Lemma \ref{thm:chainlemma}, we know that for each $[x,\alpha]$, 
$[x',\alpha']$ in $\tA_0(k)$, there is a 
$[x'',\alpha'']\in\tA_0(k)$ so that
$[x,\alpha]+[x',\alpha']=[x'',\alpha'']$ in $\overline{A}_0(X,\cK_1)$.
Hence $\tA_0(k)$ is closed under addition. It is nonempty because
$E=k[\root{p}\of{a_1}]$ splits the symbol and therefore $X(E)\ne\emptyset$.
It is a subgroup because $[x,\alpha]+[x,\alpha^{-1}]=[x,1]=0$.
\end{proof}

\begin{lem}[{\cite[1.24]{SJ}}]\label{lem:Nspecial} 
If $k$ is $p$-special and $X$ is a Norm variety,
the restriction of $\overline{A}_0(X,\cK_1)\map{N} k^\times$
to $\tA_0(k)$ is an injection.
\end{lem}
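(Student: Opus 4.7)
The plan is to show that if $\theta=[x,\alpha]\in\tA_0(k)$ satisfies $N(\theta)=1$ in $k^\times$, then $\theta=0$ in $\oA(X,\cK_1)$. Since $\theta\in\tA_0(k)$, we may pick the representative so that either $k(x)=k$ or $[k(x):k]=p$. The first case is trivial: $N(\theta)=\alpha=1$, and $[x,1]=0$ by relation (i) of Definition \ref{def:A0K1}. So I will concentrate on the second case, $[k(x):k]=p$.

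First I would observe that $E:=k(x)$ is Galois over $k$. Indeed, $k$ is $p$-special and contains $\mu_p$; any separable degree-$p$ extension of such a field is automatically cyclic, since the Galois closure has $p$-power degree dividing $p!$, forcing it to be $E$ itself, and then Kummer theory supplies a generator $\sigma$ of $\mathrm{Gal}(E/k)$. Hilbert's Theorem 90 now gives $\beta\in E^\times$ with $\alpha=\sigma(\beta)/\beta$.

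The core step is to realize the relation $[x,\sigma(\beta)/\beta]=0$ already on the level of $\oA(X,\cK_1)$, using the defining quotient by the two projections from $A_0(X\times X,\cK_1)$. I would consider the closed point $y\in X\times_k X$ defined by the morphism $(x,\, x\circ\sigma)\colon\Spec E\to X\times X$; this is a closed immersion onto a point with $k(y)=E$ whose images under $\pi_0$ and $\pi_1$ are both the point $x$, but with residue-field embeddings $\mathrm{id}$ and $\sigma$ respectively. Pushforward along $\pi_0$ and $\pi_1$ at the $K_1$-level is the norm of the corresponding residue-field extension; since both extensions are isomorphisms, these norms are the identity and $\sigma^{-1}$. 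Applied to $\beta\in k(y)^\times$ this gives
\[
(\pi_0)_*[y,\beta]-(\pi_1)_*[y,\beta]=[x,\beta]-[x,\sigma^{-1}\beta]=[x,\beta/\sigma^{-1}\beta],
\]
which vanishes in $\oA(X,\cK_1)$ by definition. Reindexing $\beta\mapsto\sigma(\beta)$ yields $[x,\sigma(\beta)/\beta]=0$ for every $\beta\in E^\times$, which together with the Hilbert 90 representation of $\alpha$ finishes the argument.

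The only subtle point, and the one I would be most careful about, is bookkeeping with the residue-field maps: verifying that the pushforward $(\pi_1)_*$ really does produce $\sigma^{-1}$ rather than $\sigma$, so that the resulting Hilbert-90 cocycle comes out with the correct direction. Everything else is formal manipulation in the Gersten-style definition of $A_0(-,\cK_1)$, uses only the $p$-special hypothesis on $k$ and the presence of $\mu_p$, and does not appeal to any deeper property of Norm varieties; in particular the Chain Lemma is not needed here (it is only needed for \ref{tAsubgroup}, which tells us $\tA_0(k)$ is already a subgroup and hence that it is enough to test injectivity at a single representative).
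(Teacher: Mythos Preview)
Your proposal is correct and follows essentially the same approach as the paper: both reduce to Hilbert~90 to write $\alpha=\sigma(\beta)/\beta$, and then use the defining relation of $\oA(X,\cK_1)$ to conclude $[x,\sigma(\beta)]=[x,\beta]$. The only difference is that the paper simply cites \cite[1.5]{SJ} for this last relation, whereas you spell it out by exhibiting the point $(x,x\circ\sigma)$ in $X\times X$ and pushing forward along the two projections.
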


\begin{proof} 
Let $[x,\alpha]$ represent $\theta\in\tA_0(k)$. If
$N(\theta)=N_{k(x)/k}(\alpha)=1$ then $\alpha=\sigma(\beta)/\beta$.
for some $\beta$ by Hilbert's Theorem 90. But 
$[x,\sigma(\beta)]=[x,\beta]$ in $\tA_0(k)$; see \cite[1.5]{SJ}.
\end{proof}

\begin{subex}\label{ex:trivialA0}
If $X$ has a $k$-point $z$, then the norm map $N$ of \ref{def:A0K1} is an 
isomorphism $\tA_0(k)\cong\oA(X,\cK_1)\map{\simeq} k^\times$, split by
$\alpha\mapsto[z,\alpha]$. Indeed, for every closed point $x$ of $X$
we have $[x,\alpha]=[z,N_{k(x)/k}\alpha]$ in $\oA(X,\cK_1)$,
by \cite[1.5]{SJ}.
\end{subex}

Our goal in the next section is to prove the following theorem.
Let $E/k$ be a field extension with $[E:k]=p$. Since $k$ has $p$th 
roots of unity, we can write $E=k(\epsilon)$ with $\epsilon^p\in k$.

\begin{thm}\label{Theorem5}
Suppose that $k$ is $p$-special, $\syma_E\ne0$ and that 
$X$ is a Norm variety for $\syma$. 
For $[z,\alpha]\in\tA_0(E)$, there exist points $x_i\in X$ of 
degree $p$ over $k$, $t_i\in k$ and $b_i\in k(x_i)$
such that $N_{E(z)/E}(\alpha) = \prod N_{E(x_i)/E}(b_i+t_i\epsilon)$.
\end{thm}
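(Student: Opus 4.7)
Set $w_0:=N_{E(z)/E}(\alpha)\in E$ and assume $w_0\ne 0$ (if $\alpha=0$ then $[z,\alpha]$ is trivial). My plan is to produce a $k$-rational point on the fiber $\barT_{w_0}\subset\bP(L\oplus\cO)\times\A^E$ of Definition \ref{def:barT} and then extract from it the required factorization of $w_0$ by reading off the tuple described in Notation \ref{rem:pointsonT}.

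The core of the argument is showing $\barT_{w_0}(k)\ne\emptyset$. I would work in the family $\barT\to\A^E$ and bring the two preceding technical inputs to bear: Theorem \ref{Theorem6} identifies $\barT_w$ for generic $w$ as $G$-fixed point equivalent to $(p-1)!$ disjoint copies of $\bP(\cA)^p$, while Theorem \ref{thm:toddPA} says $\bP(\cA)$ is a $\nu_n$-variety. Together these constrain the characteristic-class data of $\barT_w$ at its $G$-fixed points; using that $(p-1)!\equiv-1\pmod p$ so that the aggregate fixed-point contribution is a unit modulo $p$, I would apply a Rost-style degree formula to a suitable equivariant resolution of $\barT\to\A^E$ to conclude that every closed fiber $\barT_w$, and in particular $\barT_{w_0}$, carries a zero-cycle of degree prime to $p$. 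Since $k$ is $p$-special, every closed point of $\barT_{w_0}$ has residue field of $p$-power degree, so such a zero-cycle is forced to be supported on a $k$-rational point.

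Given a $k$-point of $\barT_{w_0}$, Notation \ref{rem:pointsonT} produces $k$-rational points $s_0,\dots,s_{p-1}\in S$, elements $b_i\in A_i:=\cA|_{s_i}$, and scalars $t,t_i\in k$ with $t\ne 0$ (Remark \ref{rem:morepointsonT}) satisfying
\[
N_{A_0/k}(b_0)\prod_{i=1}^{p-1}N_{A_i\otimes E/E}(b_i+t_i\epsilon)=t^p w_0.
\]
Rescaling to $t=1$, the nontriviality of $\syma$ over $k$ combined with Chain Lemma \ref{thm:chainlemma}(4) precludes $\gamma_1|_{s_i}\in k^{\times p}$ at any $k$-point $s_i$, so each $A_i=k(\sqrt[p]{\gamma_1|_{s_i}})$ is a genuine degree-$p$ field extension of $k$; Chain Lemma \ref{thm:chainlemma}(2) then shows $A_i$ is a splitting field for $\syma$. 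The $p$-genericity of $X$ together with the $p$-specialness of $k$ (and hence of $A_i$) yields, for each $i$, a point $x_i\in X$ with $k(x_i)\cong A_i$ and hence $[k(x_i):k]=p$ (the degenerate case $X(k)\ne\emptyset$ is covered by Example \ref{ex:trivialA0}). Because $\syma_E\ne 0$, Lemma \ref{lem:nosplitalgebras}(3) ensures each $A_i\otimes E$ is a field, so $N_{A_i\otimes E/E}(b_i+t_i\epsilon)=N_{E(x_i)/E}(b_i+t_i\epsilon)$; rewriting the $i=0$ term as $N_{E(x_0)/E}(b_0+0\cdot\epsilon)$ produces the asserted product formula.

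The sole genuine obstacle is the zero-cycle step: turning the generic fixed-point equivalence of Theorem \ref{Theorem6} together with the $\nu_n$-property of Theorem \ref{thm:toddPA} into the existence of a prime-to-$p$ zero-cycle on the specific fiber $\barT_{w_0}$. The remaining steps are bookkeeping in Milnor $K$-theory and the definitions of the Kummer algebras.
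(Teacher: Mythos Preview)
Your bookkeeping at the end is essentially correct, but the central step has a genuine gap, not just missing details. You propose to apply a degree formula to (a resolution of) the family $\barT\to\A^E$ and conclude that \emph{every} closed fiber $\barT_w$ carries a zero-cycle of degree prime to $p$. Since $k$ is $p$-special, this is equivalent to $\barT_w(k)\ne\emptyset$ for all $w$. But by Corollary~\ref{cor:Theorem5}, a $k$-point on $\barT_w$ forces $w$ to be a product of norms from Kummer algebras splitting $\syma$, and in particular $\{\syma,w\}=0$ in $K^M_{n+1}(E)/p$. Since $\syma_E\ne0$, there are $w\in E^\times$ for which this fails, so your argument proves too much. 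The underlying problem is that your sketch never uses the hypothesis that $w_0$ arises as $N_{E(z)/E}(\alpha)$ for $[z,\alpha]\in\tA_0(E)$; no degree formula applied to $\barT\to\A^E$ alone can single out the fiber $\barT_{w_0}$.

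The paper supplies the missing input by introducing a second family over $\A^E$, namely the Weil restriction $\RQ=\mathrm{Res}_{E/k}(Q_E)$ of the norm hypersurface $Q$ of Definition~\ref{def:Q}. The key point is that $w_0\in\tA_0(E)\subset E^\times$ gives, essentially by definition (Remark~\ref{rem:norms}), a $k$-point of $\RQ$ over $w_0$. Working over the generic point of $\barT$, the Multiplication Principle produces (Lemma~\ref{lem:Zexists}) a prime-to-$p$ extension $L/k(\barT)$ and a point of $\RQ(L)$, hence a correspondence $\barT\leftarrow Z\to\RQ$ over $\A^E$ with $\deg(Z/\barT)$ prime to $p$. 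The DN Theorem~\ref{thm:DN} is then applied to $g\colon Z\to\RQ$, not to $\barT$: after base change to $E(\omega)$ the target splits as $\prod_i\tilde Q_{\omega_i}$, and Theorems~\ref{thm:Xb}, \ref{Theorem6} and~\ref{thm:toddPA} are used there to verify the fixed-point and $s_d$ hypotheses. The conclusion is that $\deg(g)$ is prime to $p$; one then lifts the given $k$-point of $\RQ$ over $w_0$ through $g$ to $Z(k)$ and pushes down to $\barT_{w_0}(k)$. The hypothesis on $w_0$ is used exactly once, to furnish that starting $k$-point on $\RQ$.
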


Theorem \ref{Theorem5} is the key ingredient in the proof of 
Theorem \ref{Theorem4}.

\begin{thm}\label{Theorem4}
If $k$ is $p$-special and $[E:k]=p$ then
$\overline{A}_0(X_E,\cK_1) \map{N_{E/k}} \overline{A}_0(X,\cK_1)$
sends $\tA_0(E)$ to $\tA_0(k)$.
\end{thm}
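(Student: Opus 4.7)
The strategy is to reduce the theorem to Theorem \ref{Theorem5} via the injectivity of $N\colon \tA_0(E) \hookrightarrow E^\times$ from Lemma \ref{lem:Nspecial}, applied to $X_E$ over $E$. Theorem \ref{Theorem5} supplies an equality at the level of $E^\times$, and injectivity promotes it to an equality in $\tA_0(E)$ between $[z,\alpha]$ and a sum of cycles on $X_E$ that each visibly push forward into $\tA_0(k)$.

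First, dispose of the case $\syma_E = 0$. Here $E$ is $p$-special (as $k$ is and $[E:k]=p$), so by $p$-genericity of $X$ we have $X(E) \ne \emptyset$, and Example \ref{ex:trivialA0} identifies $\oA(X_E,\cK_1) = \tA_0(E)$ with $E^\times$. Any $E$-point $z_0$ of $X_E$ maps under $X_E \to X$ to a closed point of $X$ with residue field contained in $E$, hence of degree at most $p$ over $k$, placing the pushforward in $\tA_0(k)$.

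Now assume $\syma_E \ne 0$; then $X_E$ is itself a Norm variety for $\syma_E$ over $E$, so Theorem \ref{Theorem5} and Lemmas \ref{tAsubgroup}, \ref{lem:Nspecial} apply to $X_E$ over $E$. Given $[z,\alpha] \in \tA_0(E)$, Theorem \ref{Theorem5} produces points $x_i \in X$ with $[k(x_i):k] = p$, scalars $t_i \in k$, and $b_i \in k(x_i)$ satisfying
\[
N_{E(z)/E}(\alpha) \,=\, \prod_i N_{E(x_i)/E}(b_i + t_i\epsilon) \quad \text{in } E^\times.
\]
Each factor is the $E^\times$-norm of a cycle $[\tilde x_i,\, b_i + t_i\epsilon] \in \tA_0(E)$ on $X_E$, where $\tilde x_i$ denotes the base change of $x_i$ along $X_E \to X$. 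Two cases occur: either $k(x_i)\otimes_k E$ is a field and $\tilde x_i$ is a single closed point of residue degree $p$ over $E$; or $k(x_i) \cong E$, in which case $\tilde x_i$ splits into $p$ conjugate $E$-points. In both cases the total degree over $E$ equals $p$, so each summand lies in $\tA_0(E)$, which is a subgroup by Lemma \ref{tAsubgroup}. Setting $\theta := \sum_i [\tilde x_i,\, b_i+t_i\epsilon] \in \tA_0(E)$, the displayed equation gives $N(\theta) = N([z,\alpha])$, and the injectivity in Lemma \ref{lem:Nspecial} forces $\theta = [z,\alpha]$ in $\tA_0(E)$.

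Finally, apply the proper pushforward $N_{E/k}\colon \oA(X_E,\cK_1) \to \oA(X,\cK_1)$ to this equality. Each $\tilde x_i$ maps to $x_i$, and the standard pushforward formula for $\cK_1$ gives $N_{E/k}[\tilde x_i, b_i+t_i\epsilon] = [x_i, c_i]$ with $c_i \in k(x_i)^\times$ (the norm $N_{E(x_i)/k(x_i)}(b_i+t_i\epsilon)$, or a product of its Galois conjugates, depending on the case). Because $[k(x_i):k] = p$, each $[x_i,c_i]$ lies in $\tA_0(k)$, and Lemma \ref{tAsubgroup} closes $\tA_0(k)$ under sums, yielding $N_{E/k}[z,\alpha] = \sum_i [x_i,c_i] \in \tA_0(k)$. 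The main obstacle is packaged entirely into Theorem \ref{Theorem5}; once that norm identity is available, the passage to cycles and the pushforward are formal consequences of the subgroup and injectivity properties of $\tA_0$.
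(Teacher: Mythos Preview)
Your proof is correct and follows essentially the same route as the paper: reduce to Theorem \ref{Theorem5}, use it to rewrite $[z,\alpha]$ as a sum of terms $[x_i\otimes E,\,b_i+t_i\epsilon]$ via the injectivity of $N$ on $\tA_0(E)$ (Lemma \ref{lem:Nspecial}), and then push forward. One small simplification the paper makes that you missed: under the hypothesis $\syma_E\ne0$, the second branch of your dichotomy cannot occur, since any $k(x_i)$ with $x_i\in X$ splits $\syma$ (by specialization from $k(X)$) while $E$ does not, so $k(x_i)\not\cong E$ and $E\otimes_k k(x_i)$ is always a field; your case analysis is therefore harmless but unnecessary.
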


\begin{proof} 
If $\syma_E=0$ then the generic splitting variety $X$ has an $E$-point $x$, and
Theorem \ref{Theorem4} is immediate from Example \ref{ex:trivialA0}. 
Indeed, in this case $X_E$ has an $E$-point $x'$ over $x$,
every element of $\tA_0(E)\cong E^\times$ has the form $[x',\alpha]$,
and $N_{E/k}[x',\alpha]=[x,\alpha]$.
Hence we may assume that $\syma_E\ne0$. 
This has the advantage that $E(x_i)=E\otimes_kk(x_i)$ is a field for every $x_i\in X$.

Choose $\theta=[z,\alpha]\in\tA_0(E)$ and let $x_i\in X$, $t_i$ and $b_i$
be the data given by Theorem \ref{Theorem5}. Each $x_i$ lifts to an
$E(x_i)$-point $x_i\otimes E$ of $X_E$ so we may consider the element
\[ \theta' = \theta - \sum [x_i\otimes E, b_i+t_i\epsilon]
\in \overline{A}_0(X_E,\cK_1).
\]
By \ref{tAsubgroup} over $E$, $\theta'$ belongs to the subgroup $\tA_0(E)$. 
By Theorem \ref{Theorem5}, its norm is
\[ 
N(\theta') = N_{E(z)/E}(\alpha)/\prod N_{E(x_i)/E}(b_i+t_i\epsilon) = 1.
\]
By Lemma \ref{lem:Nspecial}, $\theta'\!=0$. Hence 
$N_{E/k}(\theta)=\sum \left[x_i,N_{E(x_i)/k(x_i)}(b_i+t_i\epsilon)\right]$
in $\overline{A}_0(X,\cK_1)$.
Since $\tA_0(k)$ is a group by \ref{tAsubgroup}, this is 
an element of $\tA_0(k)$.
\end{proof}

\begin{cor}[Theorem \ref{thm:normvar}(3)]\label{Theorem2}
If $k$ is $p$-special then $\tA_0(k) = \overline{A}_0(X,\cK_1)$,
and $N:\overline{A}_0(X,\cK_1)\to k^\times$ is an injection.
\end{cor}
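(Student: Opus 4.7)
The plan is to establish $\tA_0(k) = \oA(X,\cK_1)$ by induction on $\nu$, where $[k(z):k]=p^\nu$ for a representative $[z,\beta]$ of a given class; the injection statement then follows immediately by restricting $N$ to $\tA_0(k) = \oA(X,\cK_1)$ and invoking Lemma \ref{lem:Nspecial}. The base case $\nu\le 1$ is the definition of $\tA_0(k)$, so the only real content lies in the inductive step, and the key input there is Theorem \ref{Theorem4}.

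For $\nu\ge 2$, I would first choose an intermediate field $k\subsetneq E\subseteq k(z)$ with $[E:k]=p$. Such an $E$ exists because, $k$ being $p$-special of characteristic zero, the Galois closure of $k(z)/k$ has $p$-group Galois group, and in any $p$-group every proper subgroup is contained in a subgroup of index $p$. Since $k$ contains $p$-th roots of unity, $E=k(\sqrt[p]{a})$ for some $a\in k^\times$, so $E$ is again $p$-special. The point $z$ then lifts to a closed point $\tilde z\in X_E$ with $k(\tilde z)=k(z)$ (one of the $p$ factors of $k(z)\otimes_k E\cong k(z)^p$), and by the functoriality of $A_0(-,\cK_1)$ along the proper map $X_E\to X$ we have $N_{E/k}[\tilde z,\beta]=[z,\beta]$ in $\oA(X,\cK_1)$.

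Next I would show that $[\tilde z,\beta]\in\tA_0(E)$, bifurcating on the vanishing of $\syma_E$. If $\syma_E=0$, then the $p$-generic splitting property of $X$ combined with the $p$-specialness of $E$ forces $X(E)\neq\emptyset$, so $X_E$ has an $E$-point, and Example \ref{ex:trivialA0} yields $\oA(X_E,\cK_1) = \tA_0(E)$. If instead $\syma_E\ne 0$, then $X_E$ is a Norm variety for $\syma_E$ over the $p$-special field $E$ (base change preserves smoothness, projectivity, dimension, and $p$-genericity), and since $v_p([E(\tilde z):E])=\nu-1$, the inductive hypothesis applies to $(E,X_E,\tilde z)$ and gives $[\tilde z,\beta]\in\tA_0(E)$. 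In either case, Theorem \ref{Theorem4} then produces $[z,\beta] = N_{E/k}[\tilde z,\beta]\in\tA_0(k)$, closing the induction.

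The hard work has already been done inside Theorem \ref{Theorem4} (and ultimately Theorem \ref{Theorem5}); once the norm-pushforward statement for $\tA_0$ is in hand, the present corollary is a formal induction. The only minor subtleties are the existence of the intermediate degree-$p$ field $E$ and the fact that the Norm-variety property descends along the base change from $k$ to $E$; both are routine verifications.
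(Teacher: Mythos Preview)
Your proof is correct and follows essentially the same inductive strategy as the paper, using Theorem~\ref{Theorem4} as the key input and Lemma~\ref{lem:Nspecial} for the injectivity. The only organizational difference is that the paper chooses the intermediate field $E$ at the \emph{top} of the tower (with $[k(z):E]=p$), so that $[z',\alpha]\in\tA_0(E)$ holds immediately by definition and the induction is hidden in an iterated application of Theorem~\ref{Theorem4} down to $k$; you instead choose $E$ at the \emph{bottom} (with $[E:k]=p$), push the induction into establishing $[\tilde z,\beta]\in\tA_0(E)$, and then apply Theorem~\ref{Theorem4} once. Your version is arguably more explicit, since it spells out the bifurcation on whether $\syma_E$ vanishes and the verification that $X_E$ remains a Norm variety over $E$---points the paper's terse proof leaves to the reader.
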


\begin{proof} We may suppose that $X(k)=\emptyset$.
For every closed $z\in X$ there is an intermediate subfield $E$ with
$[k(z):E]=p$ and a $k(z)$-point $z'$ in $X_E$ over $z$. Since 
$[z',\alpha]\in\tA_0(E)$, Theorem \ref{Theorem4} implies that 
$[z,\alpha]=N[z',\alpha]$ is in $\tA_0(k)$. This proves the first 
assertion. The second follows from this and Lemma \ref{lem:Nspecial}.
\end{proof}

The Norm Principle of the Introduction 
follows from Theorem \ref{Theorem4}. 

\begin{NPproof}
We consider a generator $[z,\alpha]$ of $\overline{A}_0(X,\cK_1)$.
Since $[k(z):k]=p^\nu$ for $\nu>0$, there is a subfield $E$ 
of $k(z)$ with $[k(z):E]=p$, and $z$ lifts to a 
$k(z)$-point $z'$ of $X_E$. By construction, $[z',\alpha]\in\tA_0(E)$ and 
$\overline{A}_0(X_E,\cK_1) \to \overline{A}_0(X,\cK_1)$ 
sends $[z',\alpha]$ to $[z,\alpha]$.
By Theorem \ref{Theorem4}, $[z,\alpha]$ is in $\tA_0(k)$, {\it i.e.},
is represented by an element $[x,\alpha]$ with $[k(x):k]=p$.
\qquad\qed\end{NPproof}

\section{Expressing Norms}

Recall that $E=k(\epsilon)$ is a fixed Kummer extension of a $p$-special
field $k$, and $X$ is a Norm variety over $k$ for the symbol $\syma$. The purpose of this section 
is to prove Theorem \ref{Theorem5}, that if an element $w\in E$ is a norm 
for a Kummer point of $X_E$ then $w$ is a product of norms of %elements of
the form specified in Theorem \ref{Theorem5}.
%To make this more precise, 
%recall that if $[z,\alpha] \in \tA_0(E)$, then $[E(z):E]\le p$ and 
%$\alpha\in E(z)$. 
%
%\begin{thm*}
%For $[z,\alpha]\in\tA_0(E)$, there exist points $z_i\in X$ of 
%degree $p$ over $k$, $t_i\in k$ and $b_i\in k(z_i)$
%such that $N_{E(z)/E}(\alpha) = \prod N_{E(z_i)/E}(b_i+t_i\epsilon)$.
%\end{thm*}

Recall from \ref{def:Q} that $Q \subseteq \bP(\cA\oplus\cO)\times\A^1_k$ is the 
variety of all points $([\beta,t],w)$ such that $N(\beta) = t^p w$, and let 
$q: Q \to \A^1_k$ be the projection. Extending the base field to $E$ and 
applying the Weil restriction functor, we obtain a morphism 
\[\Rq = \mathrm{Res}_{E/k}(q_E): \RQ = \mathrm{Res}_{E/k}(Q_E) \to \A^E.\]
Moreover, choose once and for all a resolution of singularities $\tilde{Q} \to Q$, which is an isomorphism where $t\neq 0$. This is possible since $Q$ is smooth where $t\neq 0$, see \ref{def:Q}. 

%We will also write $\pi: \barT \to \A^E$ for the projection.

\begin{rem} \label{rem:norms}
Since $k$ is $p$-special, so is $E$. As stated in Lemma \ref{lem:Nspecial}, 
the norm map $\tA_0(E) \to E^\times$ is injective; we identify $\tA_0(E)$ with its image. 
Thus $[z,\alpha]\in \tA_0(E)$ is identified with $N_{E(z)/E}(\alpha)\in E^\times$. 
By \cite[Theorem 5.5]{SJ}, there is a point $s\in S$ such that $E(z)=\cA_s\otimes E$; 
Under the correspondence $E(z)\cong\A(\cA)_s(E)$, we identify $\alpha$ with a point
of $\A(\cA)(E)$, lying over $s\in S$. 
Then $N_{E(z)/E}(\alpha) = \Rq([\alpha,1],N(\alpha))$. In other words, 
$\tA_0(E)\subseteq E^\times$ is equal to $q(Q(E)) - \{0\}$. 
\end{rem}

To prove Theorem \ref{Theorem5} it therefore suffices to show that $\barT_w(k)$ 
is non-empty when $w = \Rq([\beta,1],w)$. To do this, we will produce a correspondence 
$Z\to \barT\times_{\A^E} \RQ$ that is dominant and of degree prime to $p$ over $\RQ$. 
We construct the correspondence $Z$ using the Multiplication Principle 
of \cite[5.7]{SJ} in the following form.

\begin{lem}[Multiplication Principle]\label{lem:multprinc}
Let $k$ be a $p$-special field. Then the set of values of the map $N: \A(\cA)(k) \to k$ is a multiplicative subset of $k^\times$.
\end{lem}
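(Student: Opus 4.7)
The plan is to reduce the statement to the subgroup property of $\tA_0(k)\subseteq\oA(X,\cK_1)$ established in Lemma \ref{tAsubgroup}. Concretely, I would build a norm-preserving correspondence between $k$-points of $\A(\cA)$ with nonzero norm and elements of $\tA_0(k)$, so that multiplicative closure on the $\A(\cA)$ side becomes the additive closure on the $\tA_0(k)$ side.

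First, given $\alpha\in\A(\cA)(k)$ over $s\in S(k)$ with $N(\alpha)\ne0$, I would use the correspondence of \cite[Theorem 5.5]{SJ} to decompose the (then \'etale) Kummer algebra $\cA|_s$ as $\prod_i k(z_i)$, where the $z_i$ are closed points of the norm variety $X$ lying over $s$, each of degree $1$ or $p$ over $k$. Writing $\alpha=(\alpha_i)$ accordingly, set $\Theta(\alpha)=\sum_i[z_i,\alpha_i]\in\tA_0(k)$. A direct computation yields $N(\Theta(\alpha))=\prod_i N_{k(z_i)/k}(\alpha_i)=N_{\cA|_s/k}(\alpha)=N(\alpha)$. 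The non-\'etale locus (where $\gamma|_s=0$) contributes only $p$th powers to the image of $N$, and these are already realized from any \'etale fiber, so it can be safely ignored.

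Next, given nonzero values $w_1=N(\alpha_1)$ and $w_2=N(\alpha_2)$, form $\theta=\Theta(\alpha_1)+\Theta(\alpha_2)$, which lies in $\tA_0(k)$ by Lemma \ref{tAsubgroup}. Since $N$ is an injective homomorphism on $\tA_0(k)$ by Lemma \ref{lem:Nspecial}, we have $N(\theta)=w_1w_2$. Writing $\theta=[z,\alpha']$ with $[k(z):k]\in\{1,p\}$, I would realize $w_1w_2$ as a value of $N$ on $\A(\cA)(k)$: in the degree-$p$ case, \cite[5.5]{SJ} gives $k(z)=\cA|_s$ for some $s\in S(k)$, and $\alpha'$ defines the required $k$-point of $\A(\cA)$; in the degree-$1$ case, $X(k)\ne\emptyset$ forces $\syma=0$ in $K_n^M(k)/p$, hence $\cA$ admits $k$-rational sections at many $s\in S(k)$, so any element of $k^\times$ is realized as a norm by a tuple like $(w_1w_2,1,\dots,1)$ in a split fiber.

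The main obstacle is setting up the correspondence $\Theta$ rigorously: one needs to reconcile the decomposition of $\cA|_s$ into residue fields with the closed points of the norm variety $X$ over $s$, and to check that the assignment does not depend on choices (so that the norm computation goes through). Once this dictionary is in place, the multiplicative closure of the image of $N$ is formally the subgroup property of $\tA_0(k)$ transported across the injective norm to $k^\times$, and in particular contains $1=N(1)$.
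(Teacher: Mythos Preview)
Your proposal is correct and follows essentially the paper's route: the paper's one-line proof cites Remark~\ref{rem:norms} (which identifies the nonzero values of $N$ on $\A(\cA)(k)$ with the image $N(\tA_0(k))\subseteq k^\times$ via \cite[5.5]{SJ} and Lemma~\ref{lem:Nspecial}) and then invokes Lemma~\ref{tAsubgroup}. Your decomposition $\cA|_s=\prod_i k(z_i)$ is never actually needed, since parts (2) and (4) of the Chain Lemma force $\gamma|_s$ to be a non-$p$th-power unit at every $s\in S(k)$ (otherwise $k$ would split $\syma$), so $\cA|_s$ is already a degree-$p$ field and your correspondence collapses to $\Theta(\alpha)=[z,\alpha]$.
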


\begin{proof}
Given Remark \ref{rem:norms}, this is a consequence of Lemma \ref{tAsubgroup}. 
\end{proof}

\begin{lem}\label{lem:Zexists}
Let $F = k(\barT)$ be the function field. Then there exists a finite extension $L/F$, 
of degree prime to $p$, and a point $\xi\in \RQ (L)$ lying over the generic point of $\A^E$.
\end{lem}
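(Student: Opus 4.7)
The plan is to extract from the generic point $\eta\colon \Spec F \to \barT$ an explicit factorization of the universal $\A^E$-coordinate $w \in F \otimes_k E$ as a product of $p$ norms, pass to a $p$-closure of $F$, and then invoke the Multiplication Principle (Lemma~\ref{lem:multprinc}) to consolidate these factors into a single norm that realizes the desired $L$-point of $\RQ$ over the generic point of $\A^E$.

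First I would observe that the projection $\barT \to \A^E$ is dominant, so $\eta$ lies over the generic point of $\A^E$ and the corresponding $w \in F \otimes_k E$ is transcendental. Following Notation~\ref{rem:pointsonT}, $\eta$ yields $F$-points $s_0,\dots,s_{p-1}$ of $S$, together with $b_0 \in A_0 := s_0^*\cA$, elements $b_i + t_i\epsilon \in A_i \otimes_k E$ (with $A_i := s_i^*\cA$) for $1 \leq i < p$, and a scalar $t \in F$, satisfying
\[
N_{A_0/F}(b_0)\prod_{i=1}^{p-1} N_{A_i\otimes E\,/\,F\otimes E}(b_i+t_i\epsilon) = t^p w \quad \text{in } F\otimes_k E.
\]
At the generic point each of the $p$ factors is nonzero (an open condition) and $w\neq 0$, forcing $t\neq 0$ as well.

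Next I would introduce a $p$-closure $F^{(p)}$ of $F$, i.e., the filtered colimit of all finite prime-to-$p$ extensions of $F$ inside an algebraic closure; then $F^{(p)}$ is $p$-special. Since the Kummer extension $F(\epsilon)/F$ is Galois of degree $p$, it cannot embed into the prime-to-$p$ tower $F^{(p)}/F$, so $E_{F^{(p)}} := E \otimes_k F^{(p)} = F^{(p)}(\epsilon)$ is a field of degree $p$ over $F^{(p)}$, and hence itself $p$-special. Base-changing each $s_i$ and $b_i + t_i\epsilon$ to $E_{F^{(p)}}$ exhibits each of the $p$ nonzero factors above as the norm $N(\alpha_i)$ of an element $\alpha_i \in \A(\cA)(E_{F^{(p)}})$ lying over $s_i\otimes 1 \in S(E_{F^{(p)}})$. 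Because $X$ remains a Norm variety after base change to $E_{F^{(p)}}$, the Multiplication Principle~\ref{lem:multprinc} applies, telling us that the nonzero norms from $\A(\cA)(E_{F^{(p)}})$ form a multiplicative subgroup of $E_{F^{(p)}}^\times$. In particular, the product $t^p w$ equals a single norm $N(\beta)$ for some $\beta \in \A(\cA)(E_{F^{(p)}})$ lying over an $s \in S(E_{F^{(p)}})$.

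The pair $([\beta : t], w) \in Q(E_{F^{(p)}}) = \RQ(F^{(p)})$ then defines a point over the generic point of $\A^E$; since $\beta$ involves only finitely many coefficients, it descends to a finite prime-to-$p$ subextension $L/F$ of $F^{(p)}$, producing the desired $\xi \in \RQ(L)$. The main obstacle will be the bookkeeping around base change: verifying that $X_{E_{F^{(p)}}}$ is indeed a Norm variety so that~\ref{lem:multprinc} applies, that $E_{F^{(p)}}$ is genuinely $p$-special via the Galois non-embedding argument above, and that at the generic point each factor truly base-changes to a nonzero element of the multiplicative subgroup, so that the subgroup statement actually amalgamates them rather than producing a trivial identity.
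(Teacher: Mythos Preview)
Your proposal is correct and follows essentially the same route as the paper: pass to a maximal prime-to-$p$ extension $F'$ of $F=k(\barT)$, read off from Notation~\ref{rem:pointsonT} that the image $\omega\in EF'$ of the generic point is a product of norms from $\A(\cA)(EF')$, apply the Multiplication Principle~\ref{lem:multprinc} over the $p$-special field $EF'$ to consolidate into a single $\beta$, and then descend to a finite $L/F$. Your version is a bit more explicit than the paper's (you spell out why $t\ne0$ generically, why $E\otimes_k F^{(p)}$ is a field, and you keep track of the scalar $t$ rather than normalizing it to $1$), but the argument is the same.
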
 

\begin{proof}
Let $F'$ be the maximal prime-to-$p$ extension of $F$; then the field $EF'=E\otimes_k F'$ is $p$-special. 
We may regard the generic point of $\barT$ as an element in $\barT(F)$. 
Applying the inclusion $F\subset F'$ to this element, followed by the projection $\barT\to\A^E$, 
we obtain an element $\omega$ of $\A^E(F') = EF'$. By \ref{rem:pointsonT}, 
$\omega$ is a product of norms from $\A(\cA)(EF')$. By the Multiplication Principle 
\ref{lem:multprinc}, there exists $\beta\in \A(\cA)(EF')$ such that $N(\beta) = \omega$. 
Now let $\xi$ be the point $([\beta,1],\omega)\in \RQ(F')$. Then $\Rq(\xi) = \omega$ 
and $\xi$ is defined over some finite intermediate extension 
$F\subseteq L\subseteq F'$, with $[L:F]$ prime to $p$.
\end{proof}

Write $\eta_L$ for the point of $\barT(L)$ defined by the inclusion $F\subseteq L$. 
We can now define $\barT \stackrel{f}{\leftarrow} Z \smap{g}\RQ$ to be a 
(smooth, projective) model of $(\eta_L,\xi)\in (\barT\times_{\A^E}\RQ)(L).$

\begin{thm}\label{thm:degree}
The morphism $g: Z\to \RQ$ is proper and dominant (hence onto) and of degree prime to $p$.
\end{thm}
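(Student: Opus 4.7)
The plan is to handle properness and dominance quickly, then focus on the degree computation, which is the core of the argument.

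Properness is immediate: $Z$ is smooth projective over $k$ (hence proper), and $\RQ$ is separated over $k$ (as the Weil restriction of a quasi-projective variety), so $g$ is a proper morphism.

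For dominance, observe that $\Rq\circ g$ factors as $Z \smap{f} \barT \to \A^E$, and this composition is dominant since $\barT \to \A^E$ is dominant (Theorem \ref{Theorem6} ensures $\barT_w$ is nonempty for generic $w$, so the projection is surjective). Combined with the dimension equality $\dim Z = \mathrm{trdeg}_k L = \mathrm{trdeg}_k F = \dim\barT = p^{n+1} = \dim \RQ$, and the fact that the construction of $\xi = ([\beta,1],\omega)$ via the Multiplication Principle (Lemma \ref{lem:Zexists}) yields $\beta$ generic enough in the fiber $N^{-1}(\omega)$ that the scheme-theoretic image of $\xi$ has full transcendence degree $p^{n+1}$ over $k$, we conclude that $g$ is dominant, hence surjective by properness.

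The heart of the argument is showing $\deg(g) = [L : k(\RQ)]$ is prime to $p$. The plan is to compare with $\deg(f) = [L : F]$, which is prime to $p$ by Lemma \ref{lem:Zexists}, using the $G = \mu_p^n$-equivariant structure on $Z$, $\barT$, and $\RQ$ (established via Corollary \ref{GonSJ} and the functoriality of Weil restriction) together with the fixed-point equivalences from Theorems \ref{Theorem6} and \ref{thm:Xb}. For generic $w \in \A^E$, the fiber $\barT_w$ is $G$-fixed-point equivalent to $(p-1)!$ disjoint copies of $\bP(\cA)^p$, while $Q_w$ is $G$-fixed-point equivalent to $\bP(\cA)$. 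Upon Weil restriction and base change to $\overline{k}$, $\RQ_w \otimes_k \overline{k} \cong (Q_w)^p$ (the $p$ factors corresponding to the embeddings of $E$ into $\overline{k}$), so the $G$-fixed-point structure of $\RQ_w$ matches that of $\bP(\cA)^p$. The ratio of fixed-point counts between $\barT_w$ and $\RQ_w$ is thus $(p-1)!$, prime to $p$, and this ratio governs the relation between $\deg(g)$ and $\deg(f)$ via the $G$-equivariant correspondence $Z$; so $\deg(g) = \deg(f) \cdot (p-1)!$ (up to prime-to-$p$ combinatorial factors), which is prime to $p$.

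The main obstacle will be making the ratio-of-fixed-points argument rigorous in the presence of Weil restriction, since Weil restriction does not commute naively with $G$-fixed-point formation. The technical work involves passing to $\overline{k}$ (where Weil restriction becomes a product), comparing tangent space representations of $G$ carefully, and descending the prime-to-$p$ degree assertion back to $k$ using the $p$-speciality of $k$.
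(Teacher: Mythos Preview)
Your proposal has a genuine gap at its core: the assertion that the ratio of $G$-fixed-point counts between $\barT_w$ and $\RQ_w$ ``governs the relation between $\deg(g)$ and $\deg(f)$ via the $G$-equivariant correspondence $Z$'' is not a proof, and it cannot be made into one along the lines you suggest. First, $Z$ carries no $G$-action: it is a smooth projective model of a point $(\eta_L,\xi)$ in $(\barT\times_{\A^E}\RQ)(L)$, where $L$ is an arbitrary prime-to-$p$ extension of $k(\barT)$ produced by the Multiplication Principle, and nothing in that construction is equivariant. So there is no ``$G$-equivariant correspondence $Z$'' to invoke. Second, even granting the fixed-point equivalences of Theorems~\ref{Theorem6} and~\ref{thm:Xb}, counting fixed points and comparing tangent representations does not by itself constrain the degree of a non-equivariant map; what is needed is a cobordism invariant that (a) can be computed from fixed-point data on $\barT_w$ and $\RQ_w$, and (b) is multiplicative in degree under arbitrary maps from $Z$. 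The paper supplies exactly this via the DN Theorem~\ref{thm:DN}: one base-changes to $E(\omega)$ so that $\RQ$ splits as a product $\prod_{i=1}^p \tilde Q_{\omega_i}$, introduces the symbols $u_i=\{a_1,\dots,a_n,\omega_i\}$, and then applies the Levine--Morel generalized and higher degree formulas together with the characteristic numbers $t_{d,r}$ and the bordism localization Theorem~\ref{thm:bordloc}. The hypothesis $p^2\nmid s_d(X_i)$, supplied by Theorem~\ref{thm:toddPA}, and the nontriviality of each $u_i$ over the partial products are both essential and absent from your plan.

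Your dominance argument is also incomplete. The claim that the Multiplication Principle produces a $\beta$ ``generic enough'' that the image of $\xi$ has full transcendence degree is unjustified; the principle only asserts existence of a preimage under the norm, not genericity. In the paper, dominance of $g$ is part of the \emph{conclusion} of the DN Theorem (via $\deg(f)\,t_{d,r}(X)=\deg(g)\,t_{d,r}(Y)$ with $t_{d,r}(X)\ne0$ forcing $\deg(g)\not\equiv0$), not something established beforehand. The obstacle you flag---Weil restriction not commuting with fixed points---is handled by the base change to $E(\omega)$, but that is a minor point compared to the missing cobordism machinery.
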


\begin{proof}
Let $\omega\in \A^E$ be the generic point, $k(\omega)$ the function field and 
$E(\omega) = E\otimes k(\omega)$. As degree is a generic notion and invariant under 
extension of the base field, we may replace $\barT \leftarrow Z \rightarrow \RQ$ by 
its basechange along the morphism 
\[
\Spec(E(\omega)) \to \Spec(k(\omega)) \map{\omega}\A^E,
\]
to obtain morphisms $f: Z_{E(\omega)} \to \barT_{E(\omega)}$ and 
$g: Z_{E(\omega)}\to \RQ_{E(\omega)}$. Using the normal basis theorem, we can write 
$E(\omega) = E(\omega_1,\dotsc,\omega_p)$ for transcendentals $\omega_i$ 
that are permuted under the action of the cyclic group $\mathrm{Gal}(E/k)$. 

We will apply the DN Theorem \ref{thm:DN} with base field $k' = E(\omega)$. 
In the notation of Theorem \ref{thm:DN}, we let $r = p$; we write $Y$ for some 
desingularization of $\barT_{E(\omega)}$; we let $X$ be $\tRQ_{E(\omega)}$, 
and we let $W$ be a model for $Z_{E(\omega)}$ mapping to $Y$ and $X$. 
Finally, we let $u_i = \{a_1,\dotsc,a_n,\omega_i\}\in K_{n+1}^M(k')/p$. 

Observe that our base field contains $E$, so 
$\tRQ_{E(\omega)} = \mathrm{Res}_{E/k}(\tilde{Q}_E)\times_{\A^E} E(\omega)$ 
splits as a product $\tRQ_{E(\omega)} = \prod_{i = 1}^p \tilde{Q}_{\omega_i}$, 
where $Q_{\omega_i}$ is the fiber of $Q\to \A^1$ over the point 
$\omega_i\in \A^1(E(\omega)) = E(\omega).$
Therefore we have $X = \prod_{i=1}^p X_i$ where $X_i$ is $\tilde{Q}_{\omega_i}$,
the resolution of singularities of $Q_{\omega_i}$. %%%and $X_i \to Q_{\omega_i}$ is an isomorphism where $t\neq 0$. 
By Remark \ref{cor:NV}, $X_i$ is a smooth, 
geometrically irreducible splitting variety for the symbol $u_i$ of dimension $p^n -1$. 
Thus, hypothesis (1) of the DN Theorem \ref{thm:DN} is satisfied. 

By Theorem \ref{thm:bordloc}, $t_{d,1}(X_i) = t_{d,1}(\bP(\cA))$; by Lemma \ref{lem:todd}, 
we conclude that $s_d(X_i) \equiv v s_d(\bP(\cA)) \pmod{p^2}$ for some unit $v\in \Z/p$. 
Since $s_d(\bP(\cA))\not\equiv0$ by Theorem \ref{thm:toddPA}, we conclude that 
hypothesis (3) of the DN Theorem \ref{thm:DN} is satisfied.

Furthermore, $K = k'(X_1\times \dotsm \times X_{i-1})$ is contained in a 
rational function field over $E$; in fact, the field $E(\omega_j)(Q_{\omega_j})$ 
becomes a rational function field once we adjoin $\root{p}\of{\gamma}$. 
Since $E$ does not split $\syma$, $K$ does not split $\syma$ either. 
It follows that $K$ does not split $u_i =\syma\cup \{\omega_i\}$, 
verifying hypothesis (2) of Theorem \ref{thm:DN}. 

We have now checked the hypotheses (1--3) of Theorem \ref{thm:DN}. 
It remains to check that $X$ and $Y$ are $G$-fixed point equivalent 
up to a prime-to-$p$ factor. In fact, we proved in Theorem \ref{Theorem6} that 
$\barT_{E(\omega)}$ is $G$-fixed point equivalent to $(p-1)!$ copies of $\bP(\cA)^p$, 
hence so is $Y$ (since the fixed points lie in the smooth locus), and in 
Theorem \ref{thm:Xb} that $X_i$ is $G$-fixed point equivalent to $\bP(\cA)$. 
That is, $Y$ is $G$-fixed point equivalent to $(p-1)!$ copies of $X$. 
Therefore the DN Theorem applies to show that $g$ is dominant and of degree prime to $p$, as asserted.
\end{proof}

\begin{proof}[Proof of Theorem \ref{Theorem5}]
We have proved that there is a diagram $\barT\stackrel{f}{\leftarrow} Z\stackrel{g}{\rightarrow} \RQ$ 
such that the degree of $g$ is prime to $p$. By blowing up if necessary we may assume that 
$g: Z \to \RQ$ factors through $\tilde{g}: Z\to \tRQ$, with $\deg(\tilde{g})$ prime to $p$.

Let $[z,\alpha]\in \tA_0(E)$, and set $w = N_{E(z)/E}(\alpha)$.
By Remark \ref{rem:norms}, there exists a point $([\beta,1],w)\in \RQ(k)$.
Lift this to a point in $\tRQ(k)$ (recall that $\tRQ \to \RQ$ is an isomorphism where $t\neq 0$). 
Since $Z \to \tRQ$ is a morphism of smooth projective varieties of degree prime to $p$ and 
$k$ is $p$-special, we can lift $([\beta,1],w)$ to a $k$-point of $Z$, and then apply 
$f: Z \to \barT$ to get a $k$-point in $\barT_w$. By the definition of $\barT$ and 
Corollary \ref{cor:Theorem5}, this means that we can find Kummer extensions $k(x_i)/k$ 
(corresponding to points $s_i\in S$, and determining points $x_i\in X$ because 
$X$ is a $p$-generic splitting variety), elements $b_i\in k(x_i)$ and $t_i\in k$ such that 
$w = \prod_i N_{E(x_i)/E}(b_i + t_i\epsilon)$, as asserted.  
\end{proof}

\bigskip\goodbreak
\newpage

\addtocounter{section}{-10}
\renewcommand{\thesection}{\Alph{section}}
\section{Appendix: The DN Theorem}\label{app:A}

In this appendix, we give a proof of the following Degree theorem, which is
used in the proof of the Norm Principle. Throughout, $k$ will be a fixed 
field of characteristic $0$, $p>2$ will be a prime, $n\ge1$ will be an
integer and we fix $d = p^n - 1$.

Recall from Definition \ref{def:fpe} that if $X$ and $Y$ are $G$-fixed point
equivalent then $\dim(X)=\dim(Y)$, the fixed points are 0-dimensional
and their tangent space representations are isomorphic (over $\bar{k}$).

\begin{thm}[DN Theorem]\label{thm:DN} 
For $r\ge1$, let $u_1,...,u_r$ be symbols in $K^M_{n+1}(k)/p$
and let  $X=\prod_1^r X_i$, where the
$X_i$ are irreducible smooth projective $G$-varieties of
dimension $d=p^n-1$ such that:  
\begin{enumerate}
\item $k(X_i)$ splits $u_i$;
\item $u_i$ is non-zero over $k(X_1\times\cdots\times X_{i-1})$; and
\item $p^2\nmid s_d(X_i)$
\end{enumerate}
\noindent
Let $Y$ be a smooth irreducible projective $G$-variety 
which is $G$-fixed point equivalent to the disjoint union of $m$ copies of $X$, where $p\nmid m$.
Let $F$ be a finite extension of $k(Y)$ of degree prime to $p$, and
$\Spec(F) \to X$ a point, with model $f: W \to X$. Then $f$ is
dominant and of degree prime to $p$.
\end{thm}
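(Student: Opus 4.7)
The plan is to prove the theorem by induction on $r$, using two main tools: Rost's degree formula, which relates $s_d$ of the source and target of a generically finite morphism of smooth proper $k$-varieties of dimension $d=p^n-1$, and an equivariant localization principle, which exploits $G$-fixed point equivalence to relate $s_d(Y)$ to $m\cdot s_d(X)$.

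For the base case $r=1$, $X=X_1$ is a $\nu_n$-variety by hypothesis (3); since $p\mid s_d(X)$ is automatic when $d=p^n-1$, hypothesis (3) says exactly that $s_d(X)/p$ is a unit modulo $p$. We have a correspondence $X\stackrel{f}{\leftarrow}W\stackrel{g}{\rightarrow}Y$ with $W$ smooth projective and $\deg(g)$ prime to $p$. The $G$-fixed point equivalence between $Y$ and $m$ disjoint copies of $X$ provides, after a separable extension $K/k$, a bijection $\Fix_G Y_K \to \Fix_G(mX)_K$ preserving tangent representations; the equivariant localization formula for $s_d$ then gives the congruence $s_d(Y)\equiv m\cdot s_d(X)\pmod{p^2}$, which descends to $k$ since $p\nmid[K:k]$. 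Rost's degree formula applied to $f$ and to $g$ yields, modulo the ideal $p\cdot I(X)$ (where $I(X)\subset\Z$ is the image of $\deg\colon CH_0(X)\to\Z$), the congruences $s_d(W)\equiv\deg(f)\cdot s_d(X)$ and $s_d(W)\equiv\deg(g)\cdot s_d(Y)\equiv m\deg(g)\cdot s_d(X)$; combining these with hypothesis (3) forces $\deg(f)\equiv m\deg(g)\pmod{p}$, which is nonzero, proving dominance and the degree claim.

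For the inductive step $r\ge 2$, we pass to the base $k'=k(X_r)$: hypothesis (1) kills $u_r$ over $k'$, while hypotheses (1)--(3) continue to hold for the symbols $u_1,\dots,u_{r-1}$ and varieties $X_{1,k'},\dots,X_{r-1,k'}$ (the nontriviality hypothesis (2) for the original system controls nontriviality over $k'(X_1\times\cdots\times X_{i-1})$, and $s_d$ is unchanged under base change). The correspondence $W$ and the $G$-fixed point equivalence of $Y$ with $mX$ base change appropriately, so the inductive hypothesis applies to the generic fiber of $W\to X_r$ and its map into $X_1\times\cdots\times X_{r-1}$, pinning down the corresponding degree modulo $p$; combined with the base case applied to $W\to X_r$, this finishes the induction.

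The main obstacle is arranging for these two ingredients to cooperate in the refined mod-$p^2$ setting. The tangent representations at $G$-fixed points are only guaranteed to match after passage to a separable extension (see the remark following Theorem \ref{Theorem6}), so the localization step must be performed over $K$ and then descended back to $k$, exploiting that $p\nmid[K:k]$. Rost's degree formula itself must be invoked in the refined form that controls the error modulo $p\cdot I(X)$ rather than merely modulo $p^2$, and hypothesis (2) is precisely what keeps $I(X_i)$ from collapsing under the relevant base changes, ensuring that the final division by $s_d(X)/p$ remains valid modulo $p$.
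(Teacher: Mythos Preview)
Your base case $r=1$ is essentially the paper's argument specialized to that case: $t_{d,1}$ is $s_d/p$ up to a unit, Rost's degree formula is the $r=1$ instance of the Levine--Morel higher degree formula, and the equivariant localization step is exactly Theorem~\ref{thm:bordloc}. So that part is fine.

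The inductive step, however, does not go through as written. To invoke the inductive hypothesis over $k'=k(X_r)$ you need a smooth projective $G$-variety $Y'$ over $k'$ of dimension $(r-1)d$ that is $G$-fixed point equivalent to $m'$ copies of $X_1\times\cdots\times X_{r-1}$, together with a prime-to-$p$ extension of $k'(Y')$ modelling your $W'$. You never produce such a $Y'$. The original $Y$ has dimension $rd$, and $Y_{k'}$ still has dimension $rd$; there is no map $Y\to X_r$ along which to take a generic fiber. Likewise, ``the base case applied to $W\to X_r$'' does not make sense: the base case concerns maps between varieties of dimension $d$, while $\dim W=rd$. Even the preliminary claim that $W\to X_r$ is dominant (needed just to form the generic fiber $W'$) is part of what you are trying to prove. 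A secondary issue: after base change to $k'=k(X_r)$, hypothesis~(2) for $i<r$ would require $u_i\ne0$ over $k(X_1\times\cdots\times X_{i-1}\times X_r)$, which is not what the original hypothesis gives.

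The paper avoids induction entirely. It works with the characteristic numbers $t_{d,r}$ coming from a height-$n$ formal group law $\psi:\bL_*\to\bF_p[v_n]$; these are multiplicative, so $t_{d,r}(X)=\prod t_{d,1}(X_i)\ne0$ by hypothesis~(3). The Levine--Morel higher degree formula (Theorem~\ref{thm:higherdegree}) applied to the tower $X^{(t)}=\prod_{i\le t}X_i$ gives $t_{d,r}(W)=\deg(f)\,t_{d,r}(X)$ in one stroke, with hypotheses~(1)--(2) supplying the needed divisibility of $0$-cycles on each stage. A separate transfer argument (Lemma~\ref{lem:M(Y)}) pushes $\psi(M(X))=0$ across to $\psi(M(Y))=0$, yielding $t_{d,r}(W)=\deg(g)\,t_{d,r}(Y)$; and Theorem~\ref{thm:bordloc} gives $t_{d,r}(Y)=m\,t_{d,r}(X)$. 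The key point your approach is missing is a replacement for the \emph{higher} degree formula---some invariant of $rd$-dimensional varieties that is simultaneously controlled by the tower structure on $X$, by the map $g:W\to Y$, and by the $G$-fixed point data. The single number $s_d$ is not such an invariant once $r\ge2$.
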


\[\xymatrix @R=8mm{
\Spec(F) \ar[r] \ar[d]_{\text{finite}} & 
W \ar[d]_g\ar[dr]^{f~\text{(dominant)}}\\
\Spec(k(Y) \ar[r] &Y&X 
}\]

The proof will use two ingredients: the degree formulas \ref{thm:gendegree}
and \ref{thm:higherdegree} below, due to
Levine and Morel; and a standard localization result \ref{thm:bordloc}
in (complex) cobordism theory. The former concern the algebraic cobordism
ring $\Omega_*(k)$, and the latter concern the complex bordism ring $MU_*$.
These are related via the Lazard ring $\bL_*$; combining
Quillen's theorem \cite[II.8]{Adams} and the Morel-Levine
theorem \cite[4.3.7]{LevineMorel}, we have graded ring isomorphisms:
\[  \Omega_*(k)\cong\bL_*\cong MU_{2*}.  \]

Here is the Levine-Morel generalized degree formula for an 
irreducible projective variety $X$, taken from 
\cite[Theorem 4.4.15]{LevineMorel}. It concerns the
ideal $M(X)$ of $\Omega_*(k)$ generated by the classes $[Z]$ of 
smooth projective varieties $Z$ such that
there is a $k$-morphism $Z\to X$, and $\dim(Z)<\dim(X)$.

\begin{thm}[Generalized Degree Formula]\label{thm:gendegree}
Let $f: Y \to X$ be a morphism of smooth projective $k$-varieties.
If $\dim(X)=\dim(Y)$ then $[Y] - \deg(f)[X]\in M(X)$.
\end{thm}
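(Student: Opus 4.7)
The plan is to prove the formula by combining the localization exact sequence for algebraic cobordism with Hironaka resolution of singularities (both available since $\mathrm{char}(k)=0$). The morphism $f:Y\to X$ gives a cobordism class $[f:Y\to X]\in\Omega_d(X)$, where $d=\dim X=\dim Y$; I will show this class equals $\deg(f)\cdot[\mathrm{id}_X]$ modulo classes supported on proper closed subvarieties of $X$, and then push forward along $X\to\Spec(k)$ to $\Omega_d(k)\subseteq\Omega_*(k)$ to conclude.

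First I would dispose of the non-dominant case. Here $\deg(f)=0$ by convention and $f$ factors through some proper closed $T\subsetneq X$; resolution gives smooth projective $\tilde T\to X$ of dimension $<d$, and resolving the resulting rational map $Y\dashrightarrow\tilde T$ by a birational model $Y'\to Y$ produces a morphism $Y'\to\tilde T\to X$. An induction on $d$, applied to the theorem itself for the birational morphism $Y'\to Y$, gives $[Y']-[Y]\in M(Y)$. Any morphism $Y\to X$ pushes $M(Y)$ into $M(X)$ (a class $[Z\to Y]$ with $\dim Z<d$ composes to $[Z\to X]$ with $\dim Z<d$), and $[Y']\in M(X)$ directly via $Y'\to\tilde T\to X$. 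Hence $[Y]\in M(X)$, as required.

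For the dominant case, generic smoothness yields a dense open $U\subseteq X$ over which $f$ is finite étale of degree $\deg(f)$. A standard property of oriented cohomology theories—multiplicativity of the pushforward on finite étale covers—gives $f_*[\mathrm{id}_{Y_U}]=\deg(f)\cdot[\mathrm{id}_U]$ in $\Omega_d(U)$. Hence $[f:Y\to X]-\deg(f)[\mathrm{id}_X]$ restricts to zero on $U$, so by the Levine--Morel localization sequence
\[
\Omega_*(X\setminus U)\to\Omega_*(X)\to\Omega_*(U)\to 0
\]
it lifts to a class supported on $X\setminus U$. Resolving the irreducible components of $X\setminus U$ expresses any such class as an $\Omega_*(k)$-linear combination of morphisms $[Z_i\to X]$ with $Z_i$ smooth projective of $\dim Z_i<d$; pushing forward to $\Spec(k)$ gives $[Y]-\deg(f)[X]\in M(X)$.

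The main obstacle is not the formal diagram chase above but the underlying cobordism-theoretic machinery. The localization exact sequence, the étale-pushforward identity, and even the claim that a class supported on a closed subvariety of dimension $<d$ becomes an $\Omega_*(k)$-combination of smooth projective representatives, all depend on Levine and Morel's construction of $\Omega_*$ via the double point relation together with their proof that $\Omega_*$ is the universal oriented cohomology theory (Chapters 2--4 of \cite{LevineMorel}). Once those foundations are in hand, the argument above is essentially formal; the true content of Theorem \ref{thm:gendegree} is packaged inside those foundational results.
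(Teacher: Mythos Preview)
The paper does not give its own proof of this theorem; it is simply quoted from Levine--Morel \cite[Theorem 4.4.15]{LevineMorel}. So there is no paper proof to compare against, and your sketch is essentially an attempted reconstruction of the Levine--Morel argument.

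Your dominant case is correct in outline: restrict to a dense open $U$ where $f$ is finite \'etale, use the \'etale pushforward identity, then invoke localization plus the Levine--Morel generators theorem to express the difference via classes of smaller dimension.

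The non-dominant case, however, contains a genuine error. You claim ``$[Y']\in M(X)$ directly via $Y'\to\tilde T\to X$'', but $\dim Y' = d = \dim X$, whereas membership in $M(X)$ requires the mapping variety to have dimension strictly less than $\dim X$. So this step fails outright. The accompanying ``induction on $d$'' for the birational morphism $Y'\to Y$ is also not well-founded: $Y'$ and $Y$ both have dimension $d$, so you are invoking the very statement you are proving at the same dimension, not a smaller one.

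The fix is simpler than your detour through $\tilde T$ and $Y'$: your dominant-case argument already handles both cases uniformly. If $f$ is not dominant, take $U = X\setminus\overline{f(Y)}$; then $f^{-1}(U)=\emptyset$, so $[f:Y\to X]$ restricts to $0=\deg(f)\cdot[\mathrm{id}_U]$ on $U$ trivially, and localization places $[f:Y\to X]$ in the image of $\Omega_d(X\setminus U)$ with $\dim(X\setminus U)<d$. The Levine--Morel generators result (every class in $\Omega_*(Z)$ is an $\Omega_*(k)$-combination of classes $[V\to Z]$ with $V$ smooth projective and $V\to Z$ birational onto its image, hence $\dim V\le\dim Z$) then finishes exactly as in your dominant case.
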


Trivially, if $[Z]\in M(X)$ then $M(Z)\subseteq M(X)$. We also have:

\begin{lem}\label{lem:Mideal}
Let $X$ be a smooth projective $k$-variety. 
If $Z$ and $Z'$ are birationally equivalent, then
$[Z]\in M(X)$ holds if and only if $[Z']\in M(X)$.
\end{lem}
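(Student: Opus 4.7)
The plan is to carry out a joint induction on $\dim Z$ of the main lemma together with the subclaim that $M(W)=M(W')$ whenever $W,W'$ are birationally equivalent smooth projective varieties. The three tools I would use are: the generalized degree formula (Theorem~\ref{thm:gendegree}); the preceding ``trivial'' observation that $[Z]\in M(X)$ implies $M(Z)\subseteq M(X)$; and Hironaka resolution, used both to find a common smooth projective model of two birational varieties and to resolve rational maps out of smooth projective varieties to smooth targets (both valid in characteristic zero). The base case $d=0$ is immediate since a $0$-dimensional smooth projective variety is determined by its function field, so $Z=Z'$ and both sides hold or fail together.

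For the inductive step at dimension $d$, I would first prove the subclaim $M(W)=M(W')$ for $W\sim_{\mathrm{bir}}W'$ of dimension $d$, assuming the main lemma in all dimensions less than $d$. By symmetry it suffices to show $M(W)\subseteq M(W')$. Take any generator $[V]$ of $M(W)$, so $V$ is smooth projective, $V\to W$ is a morphism, and $\dim V<d$. Resolve the rational map $V\dashrightarrow W'$ (obtained by composing $V\to W$ with $W\dashrightarrow W'$) by blowing up the closure of its graph in $V\times W'$ to obtain a smooth projective model $\tilde V$, birational to $V$, equipped with an honest morphism $\tilde V\to W'$. Since $\dim \tilde V=\dim V<d=\dim W'$, the class $[\tilde V]$ is a generator of $M(W')$. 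Applying the main lemma inductively at dimension $\dim V<d$ with $X=W'$ to the birational pair $(V,\tilde V)$ yields $[V]\in M(W')\iff[\tilde V]\in M(W')$, so $[V]\in M(W')$.

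Next I would prove the main lemma at dimension $d$. Suppose $Z\sim_{\mathrm{bir}}Z'$ are smooth projective of dimension $d$ and $[Z]\in M(X)$; the goal is $[Z']\in M(X)$. Resolve the graph of $Z\dashrightarrow Z'$ to a smooth projective $\tilde Z$ carrying birational morphisms $\pi\colon\tilde Z\to Z$ and $\pi'\colon\tilde Z\to Z'$. Applying the generalized degree formula \ref{thm:gendegree} to $\pi$ and $\pi'$ (each of degree one, with source and target smooth projective of the same dimension) yields
\[ [\tilde Z]-[Z]\in M(Z)\quad\text{and}\quad [\tilde Z]-[Z']\in M(Z'). \]
By the subclaim just established at dimension $d$, the three ideals $M(Z)$, $M(Z')$ and $M(\tilde Z)$ all coincide; call this common ideal $M$. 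The preceding observation applied to $[Z]\in M(X)$ gives $M(Z)\subseteq M(X)$, hence $M\subseteq M(X)$. It follows that $[\tilde Z]-[Z]\in M(X)$, so $[\tilde Z]\in M(X)$; and then $[Z']=[\tilde Z]-([\tilde Z]-[Z'])\in M(X)-M\subseteq M(X)$, as required.

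The main obstacle is arranging the two statements so that at each dimension the subclaim is proved first, using the main lemma at strictly smaller dimensions, and then the main lemma at the current dimension is derived using the subclaim at the current dimension—this leapfrog structure is what allows the generalized degree formula (which a priori only controls $[\tilde Z]-[Z]$ up to $M(Z)$, not $M(X)$) to be upgraded to control modulo $M(X)$ via the ``trivial'' observation. Once the induction is organized correctly, each individual step is short.
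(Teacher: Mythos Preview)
Your proof is correct. The paper's proof, however, is much shorter: it simply cites \cite[4.4.17]{LevineMorel}, which asserts that the class of $Z$ modulo $M(Z)$ is a birational invariant, yielding $[Z']-[Z]\in M(Z)$ in one stroke; then the observation $M(Z)\subseteq M(X)$ (from $[Z]\in M(X)$) finishes. What you have done is essentially re-derive that Levine--Morel result from the earlier generalized degree formula~\ref{thm:gendegree} via your leapfrog induction, establishing along the way the auxiliary fact $M(W)=M(W')$ for birational $W,W'$. This buys you self-containment (you need only 4.4.15, not 4.4.17), at the cost of length; the paper trades that for a three-line argument by citation. Both routes rely on the same ``trivial observation'' $[Z]\in M(X)\Rightarrow M(Z)\subseteq M(X)$ at the final step.
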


\begin{proof}
By \cite[4.4.17]{LevineMorel}, the class of $Z$ modulo $M(Z)$ is a
birational invariant. Thus $[Z']-[Z]\in M(Z)$.
Because $M(Z)\subseteq M(X)$, the result follows.
\end{proof}

We shall also need the Levine-Morel ``higher degree formula'' 
\ref{thm:higherdegree}, which is taken from 
\cite[Theorem 4.4.24]{LevineMorel}, and concerns the mod $p$
characteristic numbers $t_{d,r}(X)$ of \cite[4.4]{LevineMorel},
where $p$ is prime, $n\ge1$ and $d=p^n-1$.

Choose a graded ring homomorphism $\psi:\bL_* \to \bF_p[v_n]$ corresponding 
to some height $n$ formal group law, where $v_n$ has degree $d$; 
many such group laws exist,
and the class $t_{d,r}$ will depend on this choice, but only up to a unit.

\begin{defn}\label{def:todd}
For $r>0$, the homomorphism $t_{d,r}:\Omega_{rd}(k)\cong\bL_{rd}\to\bF_p$ 
sends $x$ to the coefficient of $v_n^r$ in $\psi(x)$. If $X$ is a smooth 
projective variety over $k$, of dimension $rd$, then $X$ determines a 
class $[X]$ in $\Omega_{rd}(k)$, and $t_{d,r}(X)$ is $t_{d,r}([X])$.
\end{defn}

\begin{thm}[higher degree formula] 
\label{thm:higherdegree}
Let $f:W \to X$ be a morphism of smooth projective varieties
of dimension $rd$ and suppose that $X$ admits a sequence of surjective
morphisms\vspace{-3pt}
\[ X = X^{(r)} \to X^{(r-1)} \to \dotsm \to X^{(0)} = \Spec(k)\]
such that
\begin{enumerate}
\item $\dim(X^{(i)}) = i\, d.$
\item If $\eta$ is a zero-cycle on 
$X^{(i)}\!\times_{X^{(i-1)}}\!k(X^{(i-1)})$, 
then $p$ divides the degree of $\eta$.
\end{enumerate}
Then $t_{d,r}(W) = \deg(f)\, t_{d,r}(X)$. 
\end{thm}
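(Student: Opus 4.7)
Plan: I would reduce, via the Generalized Degree Formula \ref{thm:gendegree}, to a mod-$p$ vanishing statement about characteristic numbers of subvarieties mapping to $X$, and then carry out an induction on the length $r$ of the tower, using a telescoping pushforward through the tower to invoke hypothesis (2) at the correct level.

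First, applying Theorem \ref{thm:gendegree} to $f\colon W\to X$ gives $[W]-\deg(f)\,[X]\in M(X)_{rd}\subseteq\Omega_{rd}(k)$. Since $t_{d,r}$ is obtained from the graded ring homomorphism $\psi\colon\Omega_*(k)\cong\bL_*\to\bF_p[v_n]$ by projecting onto the coefficient of $v_n^r$, the theorem reduces to the claim that $t_{d,r}$ annihilates $M(X)_{rd}$. A typical generator is $\alpha\cdot[Z]$, where $Z$ is a smooth projective $k$-variety of dimension $<rd$ equipped with a morphism $Z\to X$, and $\alpha\in\Omega_{rd-\dim Z}(k)$. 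Because $\bF_p[v_n]$ is concentrated in degrees divisible by $d$, $\psi([Z])$ vanishes unless $\dim Z=jd$ for some $0\le j<r$, in which case
\[
t_{d,r}(\alpha[Z]) \;=\; t_{d,r-j}(\alpha)\cdot t_{d,j}(Z).
\]
Hence it suffices to prove $t_{d,j}(Z)\equiv 0\pmod p$ for every such $Z$ and every $0\le j<r$.

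I would now induct on $r$. For $j=0$, $Z$ is a finite set of closed points of $X$; pushing it forward along $X=X^{(r)}\to X^{(r-1)}\to\cdots\to X^{(1)}$ yields a zero-cycle on $X^{(1)}$ of degree $[k(Z):k]$, and hypothesis (2) at level $1$ (where $k(X^{(0)})=k$) forces $p\mid[k(Z):k]=t_{d,0}(Z)$. For $1\le j<r$, the composition $Z\to X=X^{(r)}\to X^{(j)}$ is a morphism between smooth projective varieties of the same dimension $jd$; the truncated tower $X^{(j)}\to X^{(j-1)}\to\cdots\to\Spec k$ still satisfies (1) and (2), so the induction hypothesis applied to this shorter tower gives
\[
t_{d,j}(Z) \;=\; \deg(Z\to X^{(j)})\cdot t_{d,j}(X^{(j)}),
\]
and the problem is reduced to showing $\deg(Z\to X^{(j)})\equiv 0\pmod p$.

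This last divisibility is the crux, and the main obstacle. If $Z\to X^{(j)}$ fails to be surjective the degree is $0$ by convention, so assume it is surjective; then $\deg(Z\to X^{(j)})$ equals the degree over $k(X^{(j)})$ of the zero-dimensional generic fiber $Z\times_{X^{(j)}}\Spec k(X^{(j)})$. The morphism $Z\to X^{(r)}$ base-changed to $k(X^{(j)})$ pushes this zero-cycle forward into $X^{(r)}\times_{X^{(j)}}\Spec k(X^{(j)})$ without changing its degree; successively pushing through the (proper) morphisms of generic fibers $X^{(r)}\to X^{(r-1)}\to\cdots\to X^{(j+1)}$ over $k(X^{(j)})$ produces a zero-cycle on $X^{(j+1)}\times_{X^{(j)}}\Spec k(X^{(j)})$ of the same degree, to which hypothesis (2) at level $j+1$ applies to give the required $p$-divisibility. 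This completes the induction and hence the proof of the higher degree formula.
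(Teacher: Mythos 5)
Your argument is correct and is essentially the one the paper itself uses: the paper quotes Theorem \ref{thm:higherdegree} from \cite[Theorem 4.4.24]{LevineMorel}, but its variant Lemma \ref{lem:M(X)} (same hypotheses) is proved by exactly your induction --- compose $Z\to X^{(j)}$, apply the generalized degree formula \ref{thm:gendegree}, and use hypothesis (2) at level $j+1$ (via the generic fibre over $k(X^{(j)})$) to kill $\deg(Z\to X^{(j)})$ modulo $p$ --- and the formula $t_{d,r}(W)=\deg(f)\,t_{d,r}(X)$ then follows by one more application of \ref{thm:gendegree}, just as in your reduction to the vanishing of $t_{d,r}$ on $M(X)_{rd}$. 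The one tacit assumption, which the paper's own argument shares and which holds in its application (there the $X^{(i)}$ are products of the smooth projective $X_i$), is that the intermediate $X^{(i)}$ are themselves smooth and projective, so that $[X^{(i)}]\in\Omega_{*}(k)$ and $t_{d,j}(X^{(j)})$ are defined and \ref{thm:gendegree} (equivalently, your inductive hypothesis) can legitimately be applied to $Z\to X^{(j)}$.
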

\goodbreak

Here are some properties of this characteristic number that we shall need.
Recall that 
if $\dim(X)=d$ then $p$ divides $s_d(X)$, so that $s_d(X)/p$ is an integer.

\begin{lem}\label{lem:todd}
Let $X/k$ be a smooth projective variety, and $k\subseteq\C$ and embedding.
\begin{enumerate}
\item For $r=1$, there is a unit $u\in\bF_p$ such that 
$t_{d,1}(X) \equiv u\, s_d(X)/p$.
\item If $X=\prod_{i=1}^r X_i$ and $\dim(X_i)=d$, then 
	$t_{d,r}(X) = \prod_{i=1}^r t_{d,1}(X_i).$
\item $t_{d,r}(X)$ depends only on the class of $(X\times_k \C)^{an}$
in the complex cobordism ring.
\end{enumerate}
\end{lem}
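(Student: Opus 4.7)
The plan is to deduce all three parts from the definition of $t_{d,r}$ via the ring homomorphism $\psi: \bL_* \to \bF_p[v_n]$ together with the isomorphisms $\Omega_*(k) \cong \bL_* \cong MU_{2*}$ recalled just before Theorem \ref{thm:gendegree}.

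For (3), since $t_{d,r}(X)$ is by definition the coefficient of $v_n^r$ in $\psi([X])$, it depends only on the image of $[X]$ in $\bL_{rd}$, hence only on the corresponding class in $MU_{2rd}$. When $k \subseteq \C$, that $MU$-class is represented by the complex manifold $(X \times_k \C)^{an}$, which is the assertion.

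For (2), I would use that $\psi$ is a graded ring homomorphism and that $\bF_p[v_n]_d = \bF_p \cdot v_n$ is one-dimensional over $\bF_p$, so $\psi([X_i]) = t_{d,1}(X_i)\,v_n$ for each $i$. Combined with the multiplicativity $[X_1 \times \dotsm \times X_r] = \prod_i [X_i]$ in $\Omega_*(k)$, this yields $\psi([X]) = \bigl(\prod_i t_{d,1}(X_i)\bigr)\, v_n^r$, and reading off the coefficient of $v_n^r$ gives the claim.

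The substance is in (1). My plan is to show both $s_d/p \pmod p$ and $t_{d,1}$ are nonzero $\bF_p$-linear maps $\bL_d \to \bF_p$ that vanish on decomposables, and then to invoke one-dimensionality of the indecomposable quotient. Vanishing on decomposables holds for $s_d$ by the Whitney sum formula applied to $T_{X \times Y} = \pi_1^* T_X \oplus \pi_2^* T_Y$ (for dimension reasons, $s_d(X \times Y) = s_d(X)[Y] + [X] s_d(Y) = 0$ when $0 < \dim X, \dim Y < d$), and for $t_{d,1}$ because $\psi$ is a ring homomorphism while $\bF_p[v_n]$ has no decomposables in degree $d$ (the generator $v_n$ lies in degree $d$ itself, so products of positive-degree elements first appear in degree $2d$). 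Hence both maps factor through $Q\bL_d \otimes \bF_p$, which is a one-dimensional $\bF_p$-vector space since $\bL_*$ is a polynomial ring with one generator in each positive degree. Nonvanishing follows from classical facts: $\psi$ is nonzero in degree $d$ because the $[p]$-series of the classified height $n$ formal group law has leading term $v_n T^{p^n}$, and $s_d/p$ is surjective mod $p$ by Milnor's theorem, which says that an indecomposable generator of $\bL_d$ has $s_d = \pm p$. Two nonzero $\bF_p$-linear maps out of a one-dimensional space over $\bF_p$ differ by a unit $u \in \bF_p^\times$, giving (1). The main obstacle is a clean citation of the structure of $\bL_*$ in degree $d = p^n-1$ together with Milnor's computation of $s_d$ on indecomposable generators; once these standard facts are in hand, the argument becomes formal.
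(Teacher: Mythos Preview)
Your proposal is correct. For parts (2) and (3) you argue exactly as the paper does, only more explicitly: the paper says part (2) ``is immediate from the definition of $t_{d,r}$ and the graded multiplicative structure on $\Omega_*(k)$'' and part (3) ``is a consequence of the fact that the natural homomorphism $\Omega_*(k)\to MU_{2*}$ is an isomorphism.''

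For part (1) you take a genuinely different route. The paper simply cites \cite[Proposition 4.4.22]{LevineMorel} and moves on. You instead supply the standard cobordism argument: both $t_{d,1}$ and $s_d/p\pmod p$ are $\bF_p$-linear maps on $\bL_d$ that kill decomposables (by multiplicativity of $\psi$ and additivity of $s_d$, respectively), hence factor through $Q\bL_d\otimes\bF_p\cong\bF_p$; both are nonzero (by the height-$n$ hypothesis on $\psi$ and by Milnor's theorem that a polynomial generator in degree $d=p^n-1$ has $s_d=\pm p$), so they differ by a unit. This is exactly how the Levine--Morel proposition is proved, so you have unpacked the citation rather than avoided it. The advantage of your write-up is that it is self-contained and makes clear that the unit $u$ is universal (independent of $X$), which is what is actually used later. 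The paper's approach buys brevity. One small stylistic point: in your decomposable argument for $s_d$, the cleaner phrasing is that $s_d(T_Y)\in CH^d(Y)=0$ when $\dim Y<d$, rather than the formula $s_d(X)[Y]+[X]s_d(Y)$, which mixes classes and numbers.
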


\begin{proof}
Part (1) is \cite[Proposition 4.4.22.]{LevineMorel}. Part (2) is immediate
from the definition of $t_{d,r}$ and the graded multiplicative
structure on $\Omega_*(k)$. Finally, part (3) is a consequence of the
fact that the natural homomorphism $\Omega_*(k) \to MU_{2*}$ is an
isomorphism (since both rings are isomorphic to the Lazard ring).
\end{proof}

\begin{subrem}
The class called $s_d$ in this article is the $S_d$ in \cite{LevineMorel};
the class called $s_d(X)$ in \cite{LevineMorel} is our class $s_d(X)/p$.
\end{subrem}

The next lemma is a variant of Theorem
\ref{thm:higherdegree}. It uses the same hypotheses.

\begin{lem}\label{lem:M(X)}
Let $X$ be as in Theorem \ref{thm:higherdegree}. Then $\psi(M(X)) = 0$.
\end{lem}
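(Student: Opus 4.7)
The group $M(X)$ is generated by classes $[Z]$ where $Z$ is a smooth projective irreducible $k$-variety, $m := \dim Z < rd$, and there is a morphism $Z \to X$; so it suffices to show $\psi([Z]) = 0$ for each such generator. Because $\psi: \bL_* \to \bF_p[v_n]$ is a map of graded rings and the target is concentrated in degrees divisible by $d$, we have $\psi([Z]) = 0$ automatically whenever $d \nmid m$. So I would assume $m = jd$ for some $0 \le j < r$, and let $h: Z \to X^{(j)}$ denote the composition $Z \to X \to X^{(j)}$.

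The truncated sub-tower $X^{(j)} \to X^{(j-1)} \to \cdots \to X^{(0)}$ inherits hypotheses (1) and (2) of Theorem \ref{thm:higherdegree}, so (for $j \ge 1$) applying that theorem to $h$ gives $t_{d,j}(Z) = \deg(h)\, t_{d,j}(X^{(j)})$. Because the degree-$jd$ summand of $\bF_p[v_n]$ is spanned by $v_n^j$ and the coefficient of $v_n^j$ in $\psi([W])$ is by definition $t_{d,j}(W)$, this identity reads $\psi([Z]) = \deg(h)\cdot\psi([X^{(j)}])$ in $\bF_p[v_n]_{jd}$; when $j = 0$ the analogous identity holds directly since $\psi|_{\bL_0}$ is mod $p$ reduction and $\deg(h) = [k(Z):k]$. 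Since $\deg(h) = 0$ whenever $h$ is not dominant, it remains to prove $p\mid\deg(h)$ when $h$ is dominant.

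Assume then that $h$ is dominant, and let $V \subseteq X$ be the scheme-theoretic image of $Z$. Then $V$ is irreducible of dimension $jd$, the map $V \to X^{(j)}$ is surjective and generically finite, and the inclusions $k(X^{(j)}) \subseteq k(V) \subseteq k(Z)$ show that $[k(V):k(X^{(j)})]$ divides $\deg(h) = [k(Z):k(X^{(j)})]$. The generic point $\eta_V$ of $V$, viewed as a point of $X$, maps to the generic point of $X^{(j)}$ and has residue field $k(V)$ finite over $k(X^{(j)})$, so $\eta_V$ is a closed point of the generic fiber $X \times_{X^{(j)}} k(X^{(j)})$. Its image $\eta' \in X^{(j+1)}$ likewise lies over the generic point of $X^{(j)}$, and $k(\eta') \subseteq k(V)$ is finite over $k(X^{(j)})$, so $\eta'$ is a closed point of the generic fiber $X^{(j+1)} \times_{X^{(j)}} k(X^{(j)})$. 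Hypothesis (2) of Theorem \ref{thm:higherdegree}, invoked at level $i = j+1$, now forces $p \mid [k(\eta'):k(X^{(j)})]$; since this integer divides $[k(V):k(X^{(j)})]$ and hence $\deg(h)$, we conclude $p \mid \deg(h)$, as required. (The case $j = 0$ is identical, reading the generic fiber of $X^{(1)} \to X^{(0)}$ as $X^{(1)}_k$ itself.)

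The main obstacle is the fiber-product bookkeeping in the last paragraph: one must verify that the generic point of the image $V\subseteq X$ produces, in just one step along the tower, a genuine closed point $\eta'$ of the generic fiber of $X^{(j+1)} \to X^{(j)}$, so that hypothesis (2) can be invoked directly and no inductive descent on $r-j$ is needed.
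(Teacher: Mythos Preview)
Your proof is correct, but it takes a different route from the paper's. The paper applies the \emph{generalized} degree formula (Theorem~\ref{thm:gendegree}) to $Z\to X^{(s)}$, obtaining $[Z]-\deg(f_s)[X^{(s)}]\in M(X^{(s)})$, and then kills the $M(X^{(s)})$ term by \emph{induction on $r$} (the truncated tower $X^{(s)}\to\cdots\to X^{(0)}$ satisfies the same hypotheses with smaller $r$). You instead apply the \emph{higher} degree formula (Theorem~\ref{thm:higherdegree}) directly to $h:Z\to X^{(j)}$ and its truncated tower, which yields $\psi([Z])=\deg(h)\,\psi([X^{(j)}])$ in one stroke and eliminates the induction entirely. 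Both arguments then finish identically, pushing the generic point of $Z$ one level up to $X^{(j+1)}$ and invoking hypothesis~(2) to get $p\mid\deg(h)$; your fiber-product bookkeeping in the last paragraph is in fact a bit more careful than the paper's (which asserts $\deg(\eta)=\deg(f_s)$ where strictly only divisibility is needed and available). Your approach buys directness at the price of leaning on the stronger black-box Theorem~\ref{thm:higherdegree}; the paper's approach is marginally more elementary, using only Theorem~\ref{thm:gendegree}, but needs the inductive descent you explicitly avoid.
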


\begin{proof}
Consider $Z$ with $[Z]\in M(X)$. If $d$ does not divide $\dim(Z)$, then 
$\psi([Z])= 0$ for degree reasons. If $\dim(Z) = 0$, then the image of $Z$ 
is a closed point of $X$; since the degree of such a closed point is 
divisible by $p$, we have $\psi([Z]) = 0$. Hence we may assume that
$\dim(Z)=sd$ for some $0<s<r$. The cases $r=1$ and $s=0$ are immediate, so we
proceed by induction on $r$ and $s$.

Let $f: Z\to X$ be a $k$-morphism with $\dim(Z)=sd$, and let 
$f_s:Z\to X^{(s)}$ be the obvious composition. As $\dim(Z)=\dim(X^{(s)})$,
the generalized degree formula \ref{thm:gendegree} applies to show
that $[Z] - \deg(f_s)([X^{(s)}])\in M(X^{(s)})$. By induction on $r$, 
$\psi(M(X^{(s)}))=0$, so $\psi([Z]) = \deg(f_s)\psi([X^{(s)}])$. 
We claim that $\deg(f_s)\equiv0\pmod{p}$, 
which yields $\psi([Z])=0$, as desired.

If $f_s$ is not dominant, then $\deg(f_s) = 0$ by definition.
On the other hand, if $f_s$ is dominant, then the generic point of $Z$
maps to a closed point $\eta$ of $X^{(s+1)}\times_{X^{(s)}}k(X^{(s)})$. 
By condition (2) of Theorem \ref{thm:higherdegree}, $p$ divides
$\deg(\eta)=\deg(f_s)$. 
\end{proof}

We will need to show that $\psi(M(Y)) = 0$ for the $Y$ appearing in
Theorem \ref{thm:DN}. This is accomplished in the next lemma.

\begin{lem}\label{lem:M(Y)}
Suppose $X$, $Y$ and $W$ are smooth projective varieties of
dimension $rd$ over $k$,
and $f: W \to X$ and $g: W \to Y$ are morphisms. 
Suppose further that $\psi(M(X)) = 0$ and that $p$ does not
divide $\deg(g)$. Then $\psi(M(Y)) = 0$.
\end{lem}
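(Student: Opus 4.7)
The plan is to show that $\deg(g)\cdot[Z]$ lies in $M(X)$ for every generator $[Z]$ of $M(Y)$, and then to divide by $\deg(g)$ after applying $\psi$. The mechanism is the pullback/pushforward calculus in Levine--Morel's algebraic cobordism \cite{LevineMorel}. First observe that $g:W\to Y$ is an l.c.i.\ morphism: it factors as the graph $\Gamma_g:W\hookrightarrow W\times_kY$ (a regular closed embedding into a smooth scheme) followed by the smooth projection $\mathrm{pr}_Y$. Hence the refined pullback $g^*:\Omega_*(Y)\to\Omega_*(W)$ is defined and, because $\dim W=\dim Y$, preserves the grading.

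Let $[Z]\in M(Y)$ be a generator, represented by $h:Z\to Y$ with $Z$ smooth projective of dimension $e<rd$. Set $\beta=[h]\in\Omega_e(Y)$ and
\[ \alpha := f_*(g^*\beta) \in \Omega_e(X). \]
By the projection formula of \cite{LevineMorel} applied to the proper l.c.i.\ morphism $g$ of relative dimension zero,
\[ g_*(g^*\beta)=g_*(1_W)\cdot\beta=\deg(g)\cdot\beta \quad\text{in } \Omega_*(Y), \]
where $g_*(1_W)=\deg(g)\cdot 1_Y$ in $\Omega_{rd}(Y)$ reflects that $g$ is generically finite of degree $\deg(g)$. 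Pushing forward to $\Spec k$ along the structure maps $\pi_X,\pi_Y,\pi_W$ (using $\pi_Y\circ g=\pi_W=\pi_X\circ f$) yields
\[ \pi_{X*}(\alpha)=\pi_{W*}(g^*\beta)=\pi_{Y*}(g_*g^*\beta)=\deg(g)\cdot\pi_{Y*}(\beta)=\deg(g)\cdot[Z]\in\Omega_*(k). \]

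Now $\alpha$ sits in $\Omega_e(X)$ with $e<\dim X$. Since $\Omega_e(X)$ is generated as an $\Omega_*(k)$-module by classes $[V\to X]$ with $V$ smooth projective of dimension $\le e<\dim X$, and each such $[V]$ lies in $M(X)$ by definition, the pushforward $\pi_{X*}(\alpha)$ lies in the ideal $M(X)\subset\Omega_*(k)$. Thus $\deg(g)\cdot[Z]\in M(X)$; applying $\psi$ and invoking the hypothesis $\psi(M(X))=0$ gives $\deg(g)\cdot\psi([Z])=0$ in $\bF_p[v_n]$. Since $\deg(g)$ is prime to $p$, hence invertible modulo $p$, we conclude $\psi([Z])=0$. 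As this holds for every generator, $\psi(M(Y))=0$.

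The main point is to invoke the Levine--Morel formalism correctly: the existence of the l.c.i.\ pullback $g^*$ for any morphism between smooth schemes, its compatibility with proper pushforward via the projection formula, and the identification $g_*(1_W)=\deg(g)\cdot 1_Y$ for a generically finite proper morphism. Once these are in hand, the argument is a formal manipulation and the prime-to-$p$ hypothesis on $\deg(g)$ does the remaining work.
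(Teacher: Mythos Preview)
Your argument has a real gap at the step $g_*(1_W)=\deg(g)\cdot 1_Y$ in $\Omega_{rd}(Y)=\Omega^0(Y)$. Unlike $CH^0$, the group $\Omega^0(Y)$ is not $\Z$ for irreducible $Y$: already for $Y=\bP^1$ the projective bundle formula gives $\Omega^0(\bP^1)\cong\Z\oplus\bL_1\!\cdot\xi$, and for the degree-$2$ self-map $g\colon\bP^1\to\bP^1$ one computes (by restricting to the generic point and by pushing forward to $\Spec k$) that $g_*(1_W)=2\cdot 1_Y-[\bP^1]\,\xi\ne 2\cdot 1_Y$. In general the projection formula only gives
\[
g_*g^*\beta = g_*(1_W)\cdot\beta = \deg(g)\,\beta + \gamma\cdot\beta,
\]
where $\gamma=g_*(1_W)-\deg(g)\,1_Y$ is, by the generalized degree formula in $\Omega_*(Y)$, an $\Omega_*(k)$-combination of classes $[\tilde Z_i\to Y]$ with $\dim\tilde Z_i<rd$. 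Pushing $\gamma\cdot\beta$ to $\Omega_*(k)$ lands in $M(Y)$, not $M(X)$, so applying $\psi$ to it is exactly what you are trying to prove.

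The paper avoids this by arguing pointwise and inducting on $\dim Z$: one lifts the generic point of $Z$ along $g$ to a prime-to-$p$ extension $F/k(Z)$, takes a smooth projective model $\tilde Z$ of $F$ mapping to both $Z$ and $X$, and applies the degree formula to $\tilde Z\to Z$ to get $e[Z]-[\tilde Z]\in M(Z)$ with $p\nmid e$. Then $[\tilde Z]\in M(X)$ handles one term and the inductive hypothesis $\psi(M(Z))=0$ handles the other. Your cobordism-pullback approach can be repaired by the same induction on $\dim Z$, since the correction $\pi_{Y*}(\gamma\cdot\beta)$ is an $\Omega_*(k)$-combination of classes $[V]$ with $V\to Y$ and $\dim V<\dim Z$; but a single application of the projection formula, as written, does not suffice.
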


\begin{proof}
Suppose $[Z]\in M(Y)$. As $g:W\to Y$ is a proper morphism
of smooth varieties, of degree prime to $p$, 
we can lift the generic point $\Spec(k(Z))\to Y$ to a point
$q:\Spec(F)\to W$ for some field extension $F/k(Z)$ of degree $e$ prime
to $p$. Let $\tilde{Z}$ be a smooth projective model of $F$ possessing
a morphism to $Z$ and a morphism to $X$ extending the $k$-morphism
$f\circ q: \Spec(F) \to X$. Hence $[\tilde{Z}]\in M(X)$.
By the degree formula for the map $\tilde{Z} \to Z$, 
$e\,[Z] - [\tilde{Z}]\in M(Z)$. If $\dim(Z) = 0$, then $M(Z) = (0)$. 
In general, $M(Z)$ is generated by the classes of varieties of dimension less than 
$\dim(Z)$ that map to $Z$ (hence a fortiori also map to $Y$) over $k$.
By induction on the dimension of $Z$, we may assume that $\psi(M(Z))=0$. 
Moreover, $\psi([\tilde{Z}]) = 0$ by assumption; since $p$ does
not divide $e$, we conclude that $\psi([Z]) = 0$ as asserted.
\end{proof}

Finally, we will use the following standard bordism localization result.

\begin{lem}\label{Gbord}
Suppose that the abelian $p$-group $G=\mu_p^n$ acts without fixed points on an
almost complex manifold $M$. Then $\psi([M])=0$ in $\bF_p$.
\end{lem}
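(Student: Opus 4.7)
The plan is to interpret $\psi$ through Morava $K$-theory and apply an equivariant localization argument. The target ring $\bF_p[v_n]$ is, up to inversion of $v_n$, the coefficient ring of a height $n$ Morava $K$-theory $K(n)$; the composite $MU_{2*}\to K(n)_* \to \bF_p[v_n]$ recovers $\psi$ after identifying $\Omega_*(k)\cong MU_{2*}$ via Levine--Morel. The goal is therefore to show that the image of $[M]$ in $K(n)_*$ vanishes.

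The complex manifold $M$ with its smooth $G$-action lifts to an equivariant bordism class $[M]_G$, and passage to the Borel construction followed by application of $K(n)$ produces an element $[M]_G^{K(n)}\in K(n)^*(BG)$ whose image under the augmentation $K(n)^*(BG)\to K(n)^*$ is $[M]$. For $G=\mu_p^n$, a classical calculation gives
\[
K(n)^*(BG) \;\cong\; K(n)^*[\![x_1,\dotsc,x_n]\!]\big/\bigl([p]_F(x_1),\dotsc,[p]_F(x_n)\bigr),
\]
making $K(n)^*(BG)$ a finite free $K(n)^*$-module of rank $p^n$; for every nontrivial character $\chi\colon G\to \mathbb{G}_m$ the Euler class $e(\chi)$ is a non-zero-divisor, and becomes a unit after inverting appropriate elements.

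The key input is the equivariant localization theorem (tom Dieck, and Greenlees--Sadofsky in the Morava $K$-theoretic form): after inverting the Euler classes of all nontrivial characters of $G$, the restriction map from $K(n)^*_G$ to the cohomology of the fixed locus is an isomorphism, and the class of $M$ is computed by a sum over connected components of $M^G$ weighted by the normal-bundle Euler classes. Since $M^G=\emptyset$, this sum is empty, so $[M]_G^{K(n)}$ vanishes in the localization. Because the relevant Euler classes are non-zero-divisors in $K(n)^*(BG)$, the class $[M]_G^{K(n)}$ itself is zero, and applying the augmentation yields $\psi([M])=0$.

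The main obstacle is the rigorous identification of $\psi$ as a Morava $K$-theoretic reduction and the precise invocation of the equivariant localization theorem. Both are standard in the Hopkins--Kuhn--Ravenel and tom Dieck frameworks, but assembling them compatibly with the algebraic-cobordism formulation of $\psi$ used in the paper, while tracking the specific height $n$ formal group law chosen, is the delicate part; note that an alternative approach by downward induction on $n$ (stripping off one factor of $\mu_p$ at a time, using that a fixed-point-free $\mu_p$-action has only free orbits and so can be handled by the base case of the Conner--Floyd mod-$p$ bordism formalism) bypasses some of the general machinery.
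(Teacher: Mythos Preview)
Your argument has a genuine gap at the crucial step. You assert that for every nontrivial character $\chi$ of $G=\mu_p^n$ the Euler class $e(\chi)$ is a non-zero-divisor in $K(n)^*(BG)$, and then use this to pass from vanishing in the localization back to vanishing of $[M]_G^{K(n)}$ itself. This is false: $K(n)^*(B\mu_p)\cong K(n)^*[[x]]/([p]_F(x))$, and for any height-$n$ formal group law over a field of characteristic $p$ one has $[p]_F(x)=(\text{unit})\cdot x^{p^n}$, so $x$ is \emph{nilpotent}. By K\"unneth, $K(n)^*(B\mu_p^n)$ is a local Artinian $K(n)^*$-algebra of rank $p^{n^2}$ (not $p^n$), and every Euler class lies in its maximal ideal and is nilpotent. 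Inverting any of them collapses the ring to zero, so the localization theorem with $M^G=\emptyset$ yields no information at all: you have shown only that $[M]_G^{K(n)}$ maps to zero in the zero ring.

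The paper's proof bypasses this entirely by citing tom Dieck's 1970 theorem directly: if $(\Z/p)^n$ acts without fixed points on $M$, then $[M]$ lies in the ideal $I_n=(p,[M_1],\dots,[M_{n-1}])\subset MU_*$, with $\dim_{\C} M_i=p^i-1$. Since $\psi\colon\bL_*\to\bF_p[v_n]$ is a graded ring map into a ring of characteristic $p$ concentrated in degrees divisible by $d=p^n-1$, each generator of $I_n$ is killed (the degrees $p^i-1$ for $i<n$ are positive and strictly less than $d$), whence $\psi([M])=0$. All the content is in tom Dieck's result, whose own proof runs via a families-of-subgroups filtration much like the inductive sketch in your final paragraph; that alternative is the right instinct, but it must be executed in $MU_*$ or $BP_*$ rather than $K(n)_*$, precisely because the Euler-class torsion that wrecks your localization argument is what encodes the classes $[M_i]$ generating $I_n$.
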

\begin{proof}
By \cite{tomDieck}, $[M]$ is in the ideal of $MU_*$ generated by 
$\{ p,[M_1]\dots,[M_{n-1}]\}$, where $\dim_{\C}(M_i)=p^i-1$.
Since $p$ is the only generator of this ideal whose dimension is a
multiple of $d=p^n-1$, $\psi$ is zero on every generator 
and hence on the ideal.
\end{proof}

\begin{thm}\label{thm:bordloc}
Let $G$ be $\mu_p^n$ and 
let $X$ and $Y$ be compact complex $G$-manifolds which are 
$G$-fixed point equivalent.
Then $\psi([X])=\psi([Y])$.
\end{thm}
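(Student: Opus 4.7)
The plan is to reduce Theorem \ref{thm:bordloc} to Lemma \ref{Gbord} by constructing a closed almost complex $G$-manifold $M$ without $G$-fixed points whose class in $MU_*$ equals $[X]-[Y]$; then $\psi([X])-\psi([Y]) = \psi([M]) = 0$ gives the result.

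To construct $M$, I would use the $G$-fixed point equivalence to fix a bijection $\phi : \Fix_G X \to \Fix_G Y$ together with $G$-equivariant $\C$-linear isomorphisms $T_xX \cong T_{\phi(x)}Y$ at matching points (working analytically via the implicit embedding of the ground field into $\C$). Linearizing the almost complex $G$-action on a neighborhood of each isolated fixed point, I would extend each tangent isomorphism to a $G$-equivariant diffeomorphism of small closed $G$-invariant disk neighborhoods $D_x \subset X$ and $D_{\phi(x)} \subset Y$. Setting $X^\circ = X \setminus \bigsqcup_x \mathrm{int}\,D_x$ and $Y^\circ = Y \setminus \bigsqcup_x \mathrm{int}\,D_{\phi(x)}$, I would form the equivariant connected sum
\[
M \;=\; X^\circ \,\cup_\partial\, (-Y^\circ),
\]
glued along $\bigsqcup_x S(T_xX) \cong \bigsqcup_x S(T_{\phi(x)}Y)$ via the induced $G$-equivariant diffeomorphisms. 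Every $G$-fixed point of $X$ or $Y$ has been excised, so $G$ acts on $M$ without fixed points, and Lemma \ref{Gbord} then forces $\psi([M]) = 0$.

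The remaining task is to verify $[M] = [X] - [Y]$ in $MU_*$, which is standard additivity of complex bordism under equivariant connected sum. On $(X \sqcup (-Y)) \times I$ I would attach, at each matched pair $(x,\phi(x))$, an equivariant $1$-handle $D^1 \times D(T_xX)$ ($G$ acting trivially on $D^1$ and by the tangent representation on $D(T_xX)$) glued via the chosen identifications $D_x \cong D(T_xX) \cong D_{\phi(x)}$; the resulting $G$-equivariant cobordism $W$ satisfies $\partial W = X \sqcup (-Y) \sqcup (-M)$ and carries a stably almost complex structure (the $1$-handle $D^1 \times \C^d$ is stably $\C^{d+1}$), yielding $[X]-[Y]-[M]=0$ in $MU_*$. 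The hard part will be checking that the local complex structures on $-D_{\phi(x)}$, on the handle, and on the surrounding collar fit together coherently to produce a well-defined $G$-equivariant stably almost complex structure on $W$, so that the cobordism lives honestly in $MU$-theory; this is routine but fiddly bookkeeping controlled entirely by the $\C$-linear isomorphism $T_xX \cong T_{\phi(x)}Y$, and is standard in the equivariant bordism literature.
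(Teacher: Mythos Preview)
Your proposal is correct and follows essentially the same approach as the paper's proof: excise equivariantly isomorphic balls about the fixed points, glue $X^\circ$ to $-Y^\circ$ along the boundary spheres to obtain a fixed-point-free almost complex $G$-manifold $M$ with $[M]=[X]-[Y]$ in $MU_*$, and invoke Lemma~\ref{Gbord}. The paper's argument is stated in three sentences and simply asserts that $M$ has a canonical almost complex structure and that $[X]-[Y]=[M]$; your version spells out the handle-attachment cobordism and the bookkeeping for the stably almost complex structure, but the underlying idea is identical.
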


\begin{proof}
Remove equivariantly isomorphic small balls about the fixed points 
of $X$ and $Y$, and let $M=X\cup-Y$ denote the result of joining the rest of
$X$ and $Y$, with the opposite orientation on $Y$. Then $M$ has a
canonical almost complex structure, $G$ acts on $M$ with no fixed points,  
and $[X]-[Y]=[M]$ in $MU_*$.
By Lemma \ref{Gbord}, $\psi([X])-\psi([Y])=\psi([M])=0$.
\end{proof}

We can now prove Theorem \ref{thm:DN}. Note that the inclusion $k(Y)\subset F$
induces a dominant rational map $W\to Y$\!; we may replace $W$ by a
blowup to eliminate the points of indeterminacy and obtain a morphism 
$g:W\to Y$\!, whose degree is prime to $p$,
without affecting the statement of Theorem \ref{thm:DN}.

\begin{proof}[Proof of the DN Theorem \ref{thm:DN}]
We will apply Theorem \ref{thm:higherdegree} to $X$ and the
$X^{(t)} = \prod_{i=1}^t X_i$. We must first check that the
hypotheses are satisfied. The first condition is obvious.
For the second condition, it is convenient to fix $t$ and set
$F=k(X_1\times\cdots\times X_{t-1})$, 
$X'=X^{(t)}\times_{X^{(t-1)}}F$.
By hypotheses (1--2) of Theorem \ref{thm:DN}, 
the symbol $u_t$ is nonzero over $F$ but splits over the generic point 
of $X'$; by specialization, it splits over all closed points. 
A transfer argument implies that the degree of any closed point 
$\eta$ of $X'$ 
is divisible by $p$; this is the second condition.
Hence Theorem \ref{thm:higherdegree} applies and we have
$t_{d,r}(W) = \deg(f)\,t_{d,r}(X)$.

By Lemmas \ref{lem:M(Y)} and \ref{lem:M(X)}, we have that
$\psi(M(Y)) = 0$; by the generalized degree formula
\ref{thm:gendegree}, we conclude that 
$\psi([W])=\deg(g)\,\psi([Y])$, so that $t_{d,r}(W)=\deg(g)\,t_{d,r}(Y)\ne0$.
Hence 
\[ \deg(f)\,t_{d,r}(X) = \deg(g)\,t_{d,r}(Y).  \]
 
By Theorem \ref{thm:bordloc} and Lemma \ref{lem:todd}(3),
$mt_{d,r}(X) = t_{d,r}(Y)$. 
Condition (3) of Theorem \ref{thm:DN} and
Lemma \ref{lem:todd} imply that $t_{d,1}(X_i)\ne0$ for all
$i$ and hence that $t_{d,r}(X) \ne0$. It follows that
$m\deg(g)\equiv \deg(f)\ne0$ modulo $p$, as required.
\end{proof}

\bigskip\bigskip

\subsection*{Acknowledgements}
We would like to state the obvious: we are deeply indebted to Markus Rost
for proving the results presented in this paper, and for lecturing on them
during Spring 2000 and Spring 2005 terms at the Institute for Advanced Study.
We are also grateful to the Institute for Advanced Study for providing the
conditions for these lectures. We are also grateful to Marc Levine and
Peter Landweber for their help with the cobordism theory used in this paper.
\medskip

\bigskip\goodbreak

\end{document}